\numberwithin{equation}{section}
\title{Correlation Functions of the Schur Process Through Macdonald Difference Operators}
\author{Amol Aggarwal} 
\begin{document}

\maketitle

\begin{abstract}
Introduced by Okounkov and Reshetikhin, the Schur process is known to be a determinantal point process, meaning that its correlation functions are minors of a single correlation kernel matrix. Previously, this was derived using determinantal expressions for the skew-Schur polynomials. In this paper we obtain this result in a different way, using the fact that the Schur polynomials are eigenfunctions of Macdonald difference operators. 
\end{abstract}

\newtheorem{thm}{Theorem}[subsection]
\newtheorem{prop}[thm]{Proposition} 
\newtheorem{lem}[thm]{Lemma}
\newtheorem{cor}[thm]{Corollary}

\theoremstyle{remark}

\newtheorem{rem}[thm]{Remark}

\section{Introduction}

\subsection{Background and Results} 

A {\itshape partition} $\lambda = (\lambda_1, \lambda_2, \ldots )$ is a nonincreasing sequence of nonnegative integers such that $\sum_{i = 1}^{\infty} \lambda_i$ is finite; this sum is called the {\itshape size} of $\lambda$ and is denoted by $|\lambda|$. The number of positive $\lambda_i$ ({\itshape parts} of $\lambda$) is called the {\itshape length} of $\lambda$ and is denoted by $\ell (\lambda)$. For each positive integer $i$, let $m_i (\lambda)$ denote the number of parts of $\lambda$ equal to $i$. For each nonnegative integer $n$, let $\mathbb{Y}_n$ denote the set of partitions of size $n$, and let $\mathbb{Y} = \bigcup_{n = 0}^{\infty} \mathbb{Y}_n$ denote the set of all partitions. If $\lambda, \mu \in \mathbb{Y}$ satisfy $\mu_i \le \lambda_i$ for all positive integers $i$, we say that $\mu \subseteq \lambda$ or equivalently $\lambda \supseteq \mu$. 

Let $X$ and $Y$ be (possibly infinite) sets of variables. For any partitions $\lambda, \mu \in \mathbb{Y}$, let $s_{\lambda} (X)$ denote the Schur polynomial in $X$ associated with $\lambda$ (if $|X| < \ell (\lambda)$, then we set $s_{\lambda} (X) = 0$); let $s_{\lambda / \mu} (X)$ denote the skew-Schur polynomial associated with $\lambda$ and $\mu$; and let $F(X; Y)$ denote the Cauchy product $\prod_{(x, y) \in X \times Y} (1 - xy)^{-1}$.

Define the measure {\bfseries SM} on $\mathbb{Y}$ by setting
\begin{flalign*}
\textbf{SM} (\lambda) = \textbf{SM}_{X, Y} ( \{ \lambda \} ) = \displaystyle\frac{s_{\lambda} (X) s_{\lambda} (Y)}{F(X; Y)} 
\end{flalign*}

\noindent for all $\lambda \in \mathbb{Y}$. Originally introduced by Okounkov in \cite{23}, the measure $\textbf{SM}$ is called the {\itshape Schur measure}. The {\itshape Cauchy identity}
\begin{flalign}
\label{sum}
\displaystyle\sum_{\lambda \in \mathbb{Y}} s_{\lambda} (X) s_{\lambda} (Y) = F(X; Y)
\end{flalign}

\noindent implies that $\sum_{\lambda \in \mathbb{Y}} \textbf{SM} (\lambda) = 1$. Furthermore, a combinatorial interpretation of the Schur functions yields that $s_{\lambda} (X)$ and $s_{\lambda} (Y)$ are nonnegative for each $\lambda \in \mathbb{Y}$ if each element of $X$ and $Y$ is a nonnegative real number. Therefore, $\textbf{SM}_{X, Y}$ is a probability measure if $X$ and $Y$ are finite sets of nonnegative numbers less than $1$. 

Okounkov and Reshetikhin later generalized the Schur measure by defining the Schur process \cite{25}. For any positive integer $m$, let $\lambda = \{ \lambda^{(1)}, \lambda^{(2)}, \ldots , \lambda^{(m)} \}$ and $\mu = \{\mu^{(1)}, \mu^{(2)}, \ldots , \mu^{(m - 1)} \}$ be sequences of partitions. Let $X^{(i)}$ and $Y^{(i)}$ be (possibly infinite) sets of variables for each integer $i \in [1, m]$. Define the product $Z_{X, Y} = \prod_{1\le i\le j\le m} F \big( X^{(i)}; Y^{(j)} \big)$, and define the weight function
\begin{flalign}
\label{weight}
\mathcal{W}_{X, Y} (\lambda, \mu) = s_{\lambda^{(1)}} \big(X^{(1)} \big) \left( \displaystyle\prod_{i = 1}^{m-1} s_{\lambda^{(i+1)} / \mu^{(i)}} \big(X^{(i + 1)} \big) s_{\lambda^{(i)} / \mu^{(i)}} \big(Y^{(i)} \big) \right) s_{\lambda^{(m)}} \big( Y^{(m)} \big). 
\end{flalign}

\noindent Now define the measure {\bfseries S} on $\mathbb{Y}^m \times \mathbb{Y}^{m - 1}$ by setting
\begin{flalign*}
\textbf{S} (\lambda, \mu) = \textbf{S}_{X, Y} ( \{ (\lambda, \mu) \}) = \displaystyle\frac{\mathcal{W}_{X, Y} (\lambda, \mu)}{Z_{X, Y}}
\end{flalign*}

\noindent for all $\lambda \in \mathbb{Y}^m$ and $\mu \in \mathbb{Y}^{m - 1}$. Since the integer $m$ can be viewed as a discrete time parameter, the measure $\textbf{S}$ is called the {\itshape Schur process}. 

Observe that $\textbf{S}$ is supported on pairs $(\lambda, \mu)$ satisfying $\lambda^{(1)} \supseteq \mu^{(1)} \subseteq \lambda^{(2)} \supseteq \mu^{(2)} \subseteq \cdots \subseteq \lambda^{(m)}$ and $\ell (\lambda^{(i)}) \le \max \{ |X^{(i)}|, |Y^{(i)}| \}$, for each integer $i \in [1, m]$. The ``generalized Cauchy identity" (see Proposition 6.2 of \cite{11} for a proof) 
\begin{flalign*}
\displaystyle\sum_{(\lambda, \mu) \in \mathbb{Y}^m \times \mathbb{Y}^{m - 1}} \mathcal{W}_{X, Y} (\lambda, \mu) = Z_{X, Y}, 
\end{flalign*}

\noindent implies that $\sum_{(\lambda, \mu) \in \mathbb{Y}^m \times \mathbb{Y}^{m - 1}} \textbf{S} (\lambda, \mu) = 1$. Furthermore, a combinatorial interpretation of the skew-Schur functions implies that $\mathcal{W}_{X, Y} (\lambda, \mu)$ is nonnegative for all $(\lambda, \mu) \in \mathbb{Y}^m \times \mathbb{Y}^{m - 1}$ if each element of $X^{(i)}$ and $Y^{(i)}$ is a nonnegative real number for all integers $i \in [1, m]$. Therefore, $\textbf{S}_{X, Y}$ is a probability measure when each of the $X^{(i)}$ and $Y^{(i)}$ are finite sets of nonnegative numbers less than $1$. The Schur process may be projected onto $\mathbb{Y}^m$, giving weight $\textbf{S} (\lambda) = \sum_{\mu \in \mathbb{Y}^{m - 1}} \textbf{S} (\lambda, \mu)$ to each $\lambda \in \mathbb{Y}^m$. Observe that the Schur measure is the special $m = 1$ case of the Schur process. 

The Schur measure and Schur process are both known to specialize to probability measures that are useful in combinatorics, probability, and mathematical physics. For instance, it is shown in \cite{23} that the Schur measure specializes to the {\itshape Poissonized Plancherel measure} $\textbf{PP}_{\theta}$ (where $\theta \in \mathbb{R}$ is some parameter), which gives weight $\textbf{PP}_{\theta} (\lambda) = e^{-\theta |\lambda|} \dim (\lambda)^2 / |\lambda|!^2$ to each $\lambda \in \mathbb{Y}$. The Poissonized Plancherel measure is known to be related to many random growth models, including polynuclear growth, Last Passage Percolation, and the length of the longest increasing subsequence of a random permutation (see \cite{1, 11, 15, 16} and references therein). In addition to these examples, the Schur measure and Schur process have been used to understand a wide variety of other combinatorial processes, including plane partitions, lozenge tilings, Aztec tilings, random words, and the Totally Asymmetric Simple Exclusion Process (see \cite{4, 9, 11, 12, 17, 18, 25} and references therein).

However, in order to obtain results about these processes, one requires a refined analysis of the Schur measure and Schur process. This can be done by finding exact forms for their correlation functions. Let us first define the correlation functions of the Schur measure. Let $\mathcal{X}$ be the function mapping $\mathbb{Y}$ to finite subsets of $\mathbb{Z}$ that sends any partition $\lambda = (\lambda_1, \lambda_2, \ldots ) \in \mathbb{Y}$ to the subset $\mathcal{X}(\lambda) = \{ \lambda_1 - 1, \lambda_2 - 2, \lambda_3 - 3, \ldots , \lambda_{\ell(\lambda)} - \ell(\lambda) \} \subset \mathbb{Z}$. For any finite subset $T \subset \mathbb{Z}$, we define the {\itshape correlation function} $\rho_{\textbf{SM}} (T)$ to be the probability that $T \subseteq \mathcal{X}(\lambda)$ when $\lambda$ is randomly chosen under the Schur measure. Equivalently, 
\begin{flalign}
\label{correlationmeasure}
\rho_{\textbf{SM}} (T) = \displaystyle\sum_{\lambda \in \mathbb{Y}} \textbf{1}_{T \subseteq \mathcal{X} (\lambda)} \textbf{SM} (\lambda),
\end{flalign}

\noindent where $\textbf{1}_E$ is the indicator function for an event $E$. 

To define the analogue for the Schur process, let $\mathfrak{S}$ be the function mapping $\mathbb{Y}^m$ to finite subsets of $\{ 1, 2, \ldots , m \} \times \mathbb{Z}$ that sends any sequence of partitions $\lambda = \{ \lambda^{(1)}, \lambda^{(2)}, \ldots , \lambda^{(m)} \}$ to the subset $\{ (i, \lambda^{(i)}_j - j) \} \subset \{ 1, 2, \ldots , m \} \times \mathbb{Z}$, where $i$ ranges from $1$ to $m$; $j$ ranges from $1$ to $\ell (\lambda^{(i)})$; and $\lambda^{(i)} = \{ \lambda^{(i)}_1, \lambda^{(i)}_2, \ldots \}$ for each integer $i \in [1, m]$. For any finite subset $T \subset \{ 1, 2, \ldots , m \} \times \mathbb{Z}$, we define {\itshape correlation function} $\rho_{\textbf{S}} (T)$ to be the probability that $T \subseteq \mathfrak{S} (\lambda)$ when $\lambda \in \mathbb{Y}^m$ is randomly chosen under the Schur process. Equivalently, 
\begin{flalign}
\label{correlationprocess}
\rho_{\textbf{S}} (T) = \displaystyle\sum_{\lambda \in \mathbb{Y}^m} \textbf{1}_{T \subseteq \mathfrak{S} (\lambda)} \textbf{S} (\lambda). 
\end{flalign} 

The Schur measure and Schur process are amenable to asymptotic analysis because they are determinantal point processes, meaning that their correlation functions are minors of a single {\itshape correlation kernel matrix} (see, for example, \hyperref[measure]{Theorem \ref*{measure}} and \hyperref[process]{Theorem \ref*{process}}); we refer to the survey \cite{3} for more information about determinantal point processes. In particular, if the entries of the correlation kernel matrices associated with the Schur measure and Schur process are suitable to asymptotic analysis, then one might be able to understand the asymptotics of the correlation functions $\rho_{\textbf{SM}}$ and $\rho_{\textbf{S}}$; this can lead to results about some of the processes discussed above. Chapter 5 of \cite{11} shows how to use this method to analyze Last Passage Percolation (see also \cite{14, 24} for more information on asymptotic methods). 

In this paper we establish the following two theorems, which give explicit forms for the correlation kernel matrices associated with the Schur measure and Schur process. In the below, $S^{-1}$ refers to the set $\{ s^{-1} \}_{s \in S}$ for any subset $S \subset \mathbb{C} \backslash \{ 0 \}$. 
 	
 \begin{thm}
 \label{measure} 
 
 Let $X$ and $Y$ be finite sets of nonnegative numbers less than $1$. For each $i, j \in \mathbb{Z}$, let 
 \begin{flalign}
 \label{kernelmeasure} 
 L (i, j) = \displaystyle\frac{1}{4\pi^2} \oint \oint \displaystyle\frac{1}{w-z} \left( \displaystyle\frac{F (Y; \{ w^{-1} \} ) F(X; \{ z \})}{F(Y; \{ z^{-1} \}) F(X; \{ w \})} \right) w^{j} z^{- i - 1} dw dz, 
 \end{flalign}

 \noindent where the contours are taken along the positively oriented circles $|z| = r_1$ and $|w| = r_2$, where $r_1$ and $r_2$ are any positive reals satisfying $\max X, \max Y < r_2 < r_1 < \min (X^{-1}), \min (Y^{-1})$. Then, $\rho_{\textbf{SM}} (T) = \det \textbf{L}_T$ for any finite subset $T = \{t_1, t_2, \ldots t_d \} \subset \mathbb{Z}$, where $\textbf{L}_T$ is the $d \times d$ matrix whose $(r, c)$ entry is $L (t_r, t_c)$. 
 \end{thm}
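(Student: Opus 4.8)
The plan is to exploit that $s_{\lambda} (X) = P_{\lambda} (X; q, q)$ for every value of $q$, so that each Schur polynomial is, for every nonzero $z \in \mathbb{C}$, an eigenfunction of the Macdonald difference operator in the variables $X = \{ x_1, x_2, \ldots , x_N \}$ (where $N = |X|$) specialized at $t = q = z$. Writing $T_{z, x_i}$ for the shift sending $x_i \mapsto z x_i$, set
\begin{flalign}
\label{operator}
\mathcal{D}_z = z^{-N} \displaystyle\sum_{i = 1}^N \left( \displaystyle\prod_{j \ne i} \displaystyle\frac{z x_i - x_j}{x_i - x_j} \right) T_{z, x_i}.
\end{flalign}
Its Macdonald eigenvalue is $\mathcal{D}_z s_{\lambda} (X) = \big( \sum_{i = 1}^N z^{\lambda_i - i} \big) s_{\lambda} (X)$, the generating function of the positions $\lambda_i - i$; these are the elements of $\mathcal{X} (\lambda)$ together with the deterministic sites $\{ -i : i > \ell (\lambda) \}$. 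Since every $\mathcal{D}_z$ is diagonalized by $\{ s_{\lambda} (X) \}_{\lambda \in \mathbb{Y}}$ and $\textbf{SM} (\lambda) = s_{\lambda} (X) s_{\lambda} (Y) / F(X; Y)$, applying several operators to the Cauchy kernel and dividing turns products of eigenvalues into expectations; by \eqref{sum},
\begin{flalign}
\label{expect}
\displaystyle\frac{\mathcal{D}_{z_1} \mathcal{D}_{z_2} \cdots \mathcal{D}_{z_d} F(X; Y)}{F(X; Y)} = \displaystyle\sum_{\lambda \in \mathbb{Y}} \left( \displaystyle\prod_{s = 1}^d \displaystyle\sum_{i = 1}^N z_s^{\lambda_i - i} \right) \textbf{SM} (\lambda).
\end{flalign}
Because $i \mapsto \lambda_i - i$ is strictly decreasing and the $t_r$ are distinct, the coefficient of $z_1^{t_1} z_2^{t_2} \cdots z_d^{t_d}$ on the right equals $\sum_{\lambda} \textbf{1}_{T \subseteq \{ \lambda_i - i : 1 \le i \le N \}} \textbf{SM} (\lambda)$; removing the contribution of the sites $\{ -i : i > \ell (\lambda) \}$, which are filled with probability one, recovers $\rho_{\textbf{SM}} (T)$. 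The problem thus reduces to evaluating the left side of \eqref{expect} and extracting this coefficient.

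The engine is to rewrite the action of each operator as a contour integral. Since $T_{z, x_i} F(X; Y) = F(X; Y) \prod_{y \in Y} (1 - x_i y) (1 - z x_i y)^{-1}$, a single application is
\begin{flalign}
\label{one}
\displaystyle\frac{\mathcal{D}_z F(X; Y)}{F(X; Y)} = z^{-N} \displaystyle\sum_{i = 1}^N \left( \displaystyle\prod_{j \ne i} \displaystyle\frac{z x_i - x_j}{x_i - x_j} \right) \displaystyle\frac{F \big( Y; \{ z x_i \} \big)}{F \big( Y; \{ x_i \} \big)}.
\end{flalign}
Each summand is the residue at $\xi = x_i$ of
\begin{flalign*}
g (\xi) = \displaystyle\frac{z^{-N}}{\xi (z - 1)} \displaystyle\prod_{j = 1}^N \displaystyle\frac{z \xi - x_j}{\xi - x_j} \cdot \displaystyle\frac{F \big( Y; \{ z \xi \} \big)}{F \big( Y; \{ \xi \} \big)},
\end{flalign*}
so the finite sum in \eqref{one} becomes $\frac{1}{2 \pi i} \oint g (\xi) \, d\xi$ over a contour enclosing the $x_i$. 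The factor $(z - 1)^{-1}$ is forced by the vanishing of $z \xi - x_i$ at $\xi = x_i$; after the change of variables identifying the residue variable $\xi$ and the shifted point $z \xi$ with the kernel variables, it becomes the $(z - w)^{-1}$ of \eqref{kernelmeasure}, while the products $\prod_j \frac{z \xi - x_j}{\xi - x_j}$ and the ratio of $F(Y; \cdot)$ factors rearrange into the $F(X; \cdot)$ and $F(Y; \cdot)$ factors appearing there. Extracting the coefficient of $z^k$ by a further integral then presents the one-point function as the double integral $L(k, k)$, establishing the case $d = 1$.

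For general $d$ the main task, and the main obstacle, is to evaluate the left side of \eqref{expect} directly as a composition. Although \eqref{expect} shows the eigenvalues simply multiply, the contour form of the composition is not the product of the one-variable integrals, since each shift $T_{z_s, x_i}$ acts on the coefficients $\prod_{j \ne i} (z_{s'} x_i - x_j)(x_i - x_j)^{-1}$ and on the Cauchy factors generated by the later operators. Carrying out the nested residue calculus, I expect each pair of operators to contribute an interaction factor of the form $(z_r \xi_r - \xi_c)(\xi_r - \xi_c)^{-1}$, and the full $d$-fold integrand to carry a Vandermonde-type product in its variables. The final step is to recognize this, through the Cauchy determinant identity
\begin{flalign*}
\det \left[ \displaystyle\frac{1}{z_r - w_c} \right]_{r, c = 1}^d = \displaystyle\frac{\prod_{r < c} (z_r - z_c) (w_c - w_r)}{\prod_{r, c} (z_r - w_c)},
\end{flalign*}
as the determinant $\det \big[ L(t_r, t_c) \big] = \det \textbf{L}_T$, while simultaneously extracting the coefficient of $z_1^{t_1} \cdots z_d^{t_d}$ and discarding the deterministic sea contribution. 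The contour radii satisfying $\max X, \max Y < r_2 < r_1 < \min (X^{-1}), \min (Y^{-1})$ are precisely what makes the geometric expansions converge and separates the poles so that this bookkeeping is valid; confirming that the interaction factors assemble exactly into the Cauchy determinant, with no spurious terms, is where the bulk of the effort lies.
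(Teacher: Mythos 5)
Your proposal is correct in substance and takes essentially the same route as the paper: your $\mathcal{D}_z$ is the paper's $q^n \tilde{D}_{n;q}^1$ written in the parameter $z = q^{-1}$ (so your coefficient of $z_1^{t_1} \cdots z_d^{t_d}$ is the paper's coefficient of $Q^{-T}$), your expectation identity is Lemma \ref{operatorsf}, your single-operator residue computation is Proposition \ref{singleoperator}, and the step you defer --- the nested residue calculus whose pairwise interaction factors assemble via the Cauchy determinant identity, followed by coefficient extraction and contour deformation --- is precisely what Propositions \ref{multipleoperators}, \ref{operatorsmeasure}, \ref{contourmeasure} and the proof of Theorem \ref{measure2} carry out, with exactly the cross factors you predict. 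The one genuinely loose point is your treatment of the negative sites: the tail $\{-i : i > \ell(\lambda)\}$ is not ``filled with probability one'' as a set of fixed sites (whether a given negative integer is covered by it is a random event, since $\ell(\lambda)$ is random), and the paper instead handles this by requiring $n > \max\{d, d - \min T\}$ so that the extracted coefficient literally equals the correlation function (Theorem \ref{measure2} and Lemma \ref{functionmeasure}), afterwards reducing the general statement to that case by padding with zero variables and continuity.
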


 \begin{thm}
 \label{process}
 
Let $X^{(1)}, X^{(2)}, \ldots , X^{(m)}$ and $Y^{(1)}, Y^{(2)}, \ldots , Y^{(m)}$ be finite sets of nonnegative numbers less than $1$. For each $(s, i), (t, j) \in \{ 1, 2, \ldots , m \} \times \mathbb{Z}$, let 
\begin{flalign}
\label{kernelprocess}
K(s, i; t, j) = \displaystyle\frac{1}{4\pi^2} \displaystyle\oint \displaystyle\oint \displaystyle\frac{1}{w-z} \left( \displaystyle\frac{\prod_{k = t}^m F \big( Y^{(k)}; \{ w^{-1} \} \big) \prod_{k = 1}^s F \big( X^{(k)}; \{ z \} \big) }{\prod_{k = s}^m F \big( Y^{(k)}; \{ z^{-1} \} \big) \prod_{k = 1}^t F \big(X^{(k)}; \{ w \} \big) }\right) w^{j} z^{- i - 1} dw dz, 
\end{flalign}

\noindent where the contours are taken along the positively oriented circles $|z| = r_1$ and $|w| = r_2$, where $r_1$ and $r_2$ are any positive reals satisfying the following. If $s > t$, then $\max X^{(i)}, \max Y^{(i)} < r_1 < r_2 < \min \big( (X^{(i)})^{-1} \big), \min \big( (Y^{(i)})^{-1} \big)$ for all integers $i \in [1, m]$; otherwise, $\max X^{(i)}, \max Y^{(i)} < r_2 < r_1 < \min \big( (X^{(i)})^{-1} \big), \min \big( (Y^{(i)})^{-1} \big)$ for all integers $i \in [1, m]$. Then, $\rho_{\textbf{S}} (T) = \det \textbf{K}_T$ for any finite subset $T = \{ (a_1, b_1), (a_2, b_2), \ldots , (a_d, b_d) \} \subset \{1, 2, \ldots , m \} \times \mathbb{Z}$, where $\textbf{K}_T$ is the $d \times d$ matrix whose $(r, c)$ entry is $K(a_r, b_r; a_c, b_c)$. 
\end{thm}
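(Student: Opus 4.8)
The plan is to realize each occupation indicator $\mathbf 1_{(s,b)\in\mathfrak S(\lambda)}=\mathbf 1_{b\in\mathcal X(\lambda^{(s)})}$ as a coefficient of an eigenvalue of a Macdonald difference operator, and then to compute the relevant expectations by letting those operators act on Cauchy kernels. For a finite variable set $V=\{v_1,\dots,v_N\}$ and an auxiliary parameter $u$, write $D_V(u)$ for the first Macdonald difference operator in the variables $V$ at $q=t=u$; since $s_\nu(V)=P_\nu(V;u,u)$, this operator is diagonal in the Schur basis, with
\[
D_V(u)\,s_\nu(V)=\Big(\sum_{i=1}^N u^{\,\nu_i+N-i}\Big)s_\nu(V)\qquad(\ell(\nu)\le N).
\]
The Laurent polynomial $\sum_i u^{\nu_i+N-i}=\sum_{a}\mathbf 1_{a\in\mathcal X(\nu)}\,u^{a+N}$ is the generating function of the particle set $\mathcal X(\nu)$, so a single contour integral in $u$ about $0$ extracts $\mathbf 1_{b\in\mathcal X(\nu)}$. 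Moreover the operators $D_V(u)$ commute and share the Schur eigenbasis, so $\prod_k D_V(u_k)$ has eigenvalue $\prod_k\sum_i u_k^{\nu_i+N-i}$, and extracting the coefficient of $\prod_k u_k^{\,b_k+N}$ recovers $\prod_k\mathbf 1_{b_k\in\mathcal X(\nu)}$ whenever the $b_k$ are distinct, which is automatic since $T$ is a set.

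I would first treat a single time, which settles Theorem~\ref{measure} as the $m=1$ case and the equal-time blocks $K(t,\cdot;t,\cdot)$ of Theorem~\ref{process}. By iterated (skew-)Cauchy summation the marginal of $\lambda^{(t)}$ under $\mathbf S$ is the Schur measure with variable sets $\mathcal X_t=\bigcup_{k\le t}X^{(k)}$ and $\mathcal Y_t=\bigcup_{k\ge t}Y^{(k)}$. Using the eigenrelation and the Cauchy identity (\ref{sum}), with $N=|\mathcal X_t|$,
\[
\sum_\nu\Big(\prod_k\sum_i u_k^{\nu_i+N-i}\Big)s_\nu(\mathcal X_t)s_\nu(\mathcal Y_t)=\Big(\prod_k D_{\mathcal X_t}(u_k)\Big)F(\mathcal X_t;\mathcal Y_t),
\]
and the right side is explicit because $D_{\mathcal X_t}(u_k)$ acts on the product $F$ through the substitutions $v\mapsto u_k v$, multiplying $F$ by $\prod_{y}\frac{1-vy}{1-u_k vy}$. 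Dividing by $F(\mathcal X_t;\mathcal Y_t)$, extracting the coefficients of $\prod_k u_k^{b_k+N}$, and performing the $v$-residues turns the resulting rational function into a double-contour expression, which a Cauchy-type determinant identity reorganizes into $\det\mathbf L_T$ with the kernel $L$ of (\ref{kernelmeasure}) for $X=\mathcal X_t,\ Y=\mathcal Y_t$; this matches $K(t,i;t,j)$.

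The substantive case is $s\ne t$, where the two observables live on $\lambda^{(s)}$ and $\lambda^{(t)}$, coupled through the interlacing partitions $\mu^{(i)}$. Here I would \emph{not} collapse the whole process to a single Schur measure; instead, summing out only the ``far past'' (times $<\min(s,t)$) and the ``far future'' (times $>\max(s,t)$) by skew-Cauchy reduces those ends to honest Schur functions $s_{\lambda^{(s)}}(\mathcal X_s)$ and $s_{\lambda^{(t)}}(\mathcal Y_t)$, up to $\lambda$-independent Cauchy factors, while leaving the intermediate skew-Schur chain between times $s$ and $t$ intact. Applying $D_{\mathcal X_s}(z)$ and the operator producing the eigenvalue at time $t$, and then resumming the intermediate chain by the branching and skew-Cauchy identities, reduces the two-time expectation to an explicit action of these operators on the products of Cauchy kernels $\prod_{k\le t}F(X^{(k)};\cdot)$ and $\prod_{k\ge s}F(Y^{(k)};\cdot)$. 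The ordering of $s$ and $t$ determines which shifted variable is the ``inner'' one, hence which residues are crossed when the two auxiliary contours are separated; this is exactly the origin of the two contour regimes ($r_1<r_2$ when $s<t$, otherwise $r_2<r_1$) in the statement.

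Finally I would combine the two computations: extract the coefficients producing $w^{j}z^{-i-1}$, identify the emerging bilinear structure with $\frac{1}{z-w}\,\Psi_{s,t}(z)/\Psi_{s,t}(w)$, where $\Psi_{s,t}$ is the ratio of Cauchy factors in (\ref{kernelprocess}), and invoke the determinantal identity to write $\rho_{\mathbf S}(T)=\det\mathbf K_T$. I expect the main obstacle to be precisely this identification in the cross-time case: converting the iterated difference-operator output — a sum over shifted $\mathcal X$-variables weighted by the Macdonald coefficients $\prod_{j\ne i}\frac{v_i-u v_j}{v_i-v_j}$, including the collision terms in which two operators shift the same variable — into a single determinant with the stated kernel. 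This should require a careful residue analysis together with a Cauchy determinant identity, and delicate bookkeeping so that the poles from the factors $(1-u v y)^{-1}$ and from $\frac{1}{z-w}$ are separated consistently with the prescription for $s<t$ versus $s\ge t$; checking that these residue contributions assemble into $K(s,i;t,j)$ rather than merely into its diagonal is the heart of the argument.
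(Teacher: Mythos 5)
Your single-time reduction is correct, and your two-time skeleton is workable: for points lying at exactly two distinct times $s<t$, the operators you invoke act on disjoint variable sets ($\mathcal{X}_s$ consists of $X$-variables, $\mathcal{Y}_t$ of $Y$-variables), the weight summed over all other levels collapses by skew-Cauchy to a function that is of product form separately in $\mathcal{X}_s$ and in $\mathcal{Y}_t$, so the contour representation of Proposition \ref{singleoperator} can be iterated; this is essentially the paper's own mechanism restricted to a special case. The genuine gap is that Theorem \ref{process} asserts $\rho_{\textbf{S}}(T) = \det \textbf{K}_T$ for \emph{arbitrary} finite $T$, which generically contains points at three or more distinct times, and your proposal never produces such correlations. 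Computing the one- and two-time correlation functions (even verifying they equal the corresponding $1\times 1$ and $2\times 2$ determinants of $K$) does not determine the higher correlation functions, and certainly does not prove they are determinants of the same kernel: the determinantal identity for all $d$ is the content of the theorem, not a consequence of its low-rank cases.

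The reason your method stalls at two distinct times is structural, not a matter of bookkeeping. If $s<t<v$ are all observed, then after summing out the unobserved levels, the middle partition $\lambda^{(t)}$ enters the weight only through skew Schur functions tying it to its neighbors; no complete Schur function $s_{\lambda^{(t)}}(\cdot)$ in any available variable set remains, so there is no Macdonald difference operator that acts with eigenvalue indexed by $\lambda^{(t)}$. The two extreme times can use $s_{\lambda^{(s)}}(\mathcal{X}_s)$ and $s_{\lambda^{(v)}}(\mathcal{Y}_v)$, but a sandwiched time has neither, and you cannot trade one side away without destroying the other. This is precisely the obstruction the paper's proof is built to remove: Corollary \ref{skew} rewrites each skew Schur function as a truncated scalar product of genuine Schur functions in enlarged auxiliary sets $\{X^{(i)}, A^{(i-1)}\}$ and $\{Y^{(i)}, B^{(i)}\}$; Lemma \ref{weightscalar} and Proposition \ref{weightoperators} then let one difference operator per point of $T$ — at every observed time, including middle ones — act by its eigenvalue on a product-form Cauchy kernel; and Lemma \ref{scalarseries}, Corollary \ref{limit}, and Lemma \ref{fhscalar} evaluate the resulting nested scalar products in the $u\to\infty$ limit before the Cauchy determinant identity assembles the answer. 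Without this mechanism (or a substitute such as the Jacobi--Trudi-based arguments of Okounkov--Reshetikhin or Borodin--Rains), your outline cannot reach general $T$, so it does not prove the stated theorem.
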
 

\begin{rem}
Although not stated above, it is possible to show that both theorems above hold in a formal setting (see Section 2 of \cite{6}), in which the $X^{(i)}$ and $Y^{(i)}$ are infinite sets of formal variables and the contour integrals represent sums of appropriate residues of the integrand. This more general formulation can be necessary for application; for instance, the Poissonized Plancherel measure is not obtained from selecting suitable finite sets $X$ and $Y$ of real numbers in \hyperref[measure]{Theorem \ref*{measure}}. Instead, it is obtained from specializing the Schur measure $\textbf{S}_{X, Y}$ where $X$ and $Y$ are infinite sets of formal variables (see \cite{23} or Chapter 5 of \cite{11}). However, the more general statement can be derived either directly from \hyperref[measure]{Theorem \ref*{measure}} and \hyperref[process]{Theorem \ref*{process}} or from applying our methods in the formal setting given in Section 2 of \cite{6}. For the sake of brevity, we will not pursue this here; instead, we will adhere to the more familiar framework in which the $X^{(i)}$ and $Y^{(i)}$ are finite sets of numbers between $0$ and $1$. 
\end{rem}

Both of the theorems above have been established previously. \hyperref[measure]{Theorem \ref*{measure}} is a special case of \hyperref[process]{Theorem \ref*{process}}, which was originally shown to hold in \cite{25} by Okounkov and Reshetikhin using fermionic Fock space techniques. In particular, they evaluated the correlation functions $\rho_{\textbf{S}}$ explicitly by interpreting them as matrix elements of the fermionic Fock space; this required the use of the Jacobi-Trudi identity, which is a determinantal expression for the skew-Schur functions. Another proof of \hyperref[process]{Theorem \ref*{process}} was found in \cite{13} by Borodin and Rains through the Eynard-Mehta theorem; they also used the Jacobi-Trudi identity. 

In this paper we derive the correlation kernel matrix of the Schur process without using any determinantal identities for the Schur functions. Instead we use the fact that the Schur polynomials are special $q = t$ cases of the Macdonald $(q, t)$-polynomials, which are eigenfunctions of the Macdonald $(q, t)$-difference operators. This idea is becoming increasingly popular in the analysis of Macdonald processes, which in general are not known to exhibit any determinantal behavior (we refer to \cite{5, 6, 7, 8, 10, 11, 12} and references therein for examples and additional information). 

Let us briefly outline our method; in order to prove \hyperref[measure]{Theorem \ref*{measure}}, we use ideas from \cite{5}. Remark 2.2.15 of \cite{5} first suggests to apply the Macdonald $(q, q)$-difference operators in $X$ to the Cauchy product $F(X; Y) = \sum_{\lambda \in \mathbb{Y}} s_{\lambda} (X) s_{\lambda} (Y)$. The results from Chapter 2, Section 2 of \cite{5} can then be used to put the resulting expression in a manageable contour integral form. Since the Schur polynomials are eigenfunctions of the $(q, q)$-Macdonald difference operators, this yields a family of observables (indexed by $q$), each of which has a contour integral form, for the Schur measure. As was also suggested in Remark 2.2.15 of \cite{5}, we then vary $q$ to obtain ``enough" observables for the Schur measure in order to extract the correlation functions $\rho_{\textbf{SM}}$ in a systematic way. 

This does not directly apply to the Schur process because the skew-Schur polynomials are not always eigenfunctions of Macdonald difference operators. Therefore, we first implement a method from \cite{6} that expresses the skew-Macdonald polynomials in terms of scalar products of Macdonald polynomials; this was originally done in \cite{6} in order to evaluate multi-level observables for the Macdonald process. In our case, this allows us to express the skew-Schur functions in terms of scalar products of the Schur functions. We can then apply a method similar to the one used for the Schur measure to obtain a large family of observables for the Schur process, which allows us to evaluate the correlation functions $\rho_{\textbf{S}}$. 

Although the methods used in this article were recently popularized by Borodin and Corwin in \cite{5} for probabilistic reasons, algebraic combinatorialists have been using Macdonald difference operators to obtain identities for symmetric functions since the 1990s. For instance, the fact that the Macdonald polynomials are eigenfunctions of the Macdonald difference operators was used in \cite{19, 26} to deduce the Kirillov-Noumi-Warnaar identity; the $q = t$ case of this identity resembles \hyperref[operatorsmeasure]{Proposition \ref*{operatorsmeasure}} when $q_1 = q_2 = \cdots = q_m$. This fact was also used in \cite{20, 21} to exhibit the Lassalle-Schlosser identity, which is a non-determinantal generalization of the Jacobi-Trudi identity for Macdonald polynomials. More recently, Betea and Wheeler used Macdonald difference operators to obtain several new identities involving symmetric functions and relate them to plane partitions and alternating sign matrices \cite{2}. 

The Schur process is a special case of the Macdonald process, which has recently been a significant topic of research due to its applications in combinatorics, probability, representation theory, and mathematical physics (see \cite{5, 6, 7, 8, 10, 11, 12} and references therein). Therefore, finding asymptotically analyzable expressions for the correlation functions of the Macdonald process would have many implications in these fields. For instance, they would yield the correlation functions for the joint eigenvalue distribution of general $\beta$ random matrix ensembles, which has been of interest to probabilists and mathematical physicists for over fifty years (see \cite{10}). The previous two derivations of the correlation functions of the Schur process in \cite{13, 25} used the Jacobi-Trudi identity; there is no known analogue of this identity that produces determinantal expressions for the Macdonald polynomials, so it seems likely that a new method will be required in order to find the correlation functions of the Macdonald process. Unfortunately, our methods alone are unable to accomplish this task because they do not yield enough observables for the Macdonald process (for instance, the parameters $q$ and $t$ cannot be varied in the general Macdonald setting) for us to extract its correlation functions systematically. 

Still, our method is interesting to us for two reasons. The first reason is linear algebraic. Although the Macdonald $(q, t)$-polynomials and difference operators are not known to exhibit any determinantal behavior, our proof shows that they can still be used to ``detect" that the Schur measure is a determinantal point process. This phenomenon has been corroborated in Chapter 3 of \cite{5}, in which Borodin and Corwin use Macdonald difference operators to show how a Fredholm determinant arises from observables for the Whittaker process; other determinantal results obtained through Macdonald difference operators can be found in \cite{6, 7, 8}. However, our proof is the first to use Macdonald difference operators to find determinantal expressions in the Schur process (see \hyperref[determinants]{Remark \ref*{determinants}}). 

The second reason is combinatorial. As mentioned previously, the proof of \hyperref[process]{Theorem \ref*{process}} by Okounkov and Reshetikhin uses techniques from the fermionic Fock space, which is an object that arises from representation theory; the proof of \hyperref[process]{Theorem \ref*{process}} by Borodin and Rains uses the Eynard-Mehta theorem, which is a fact from linear algebra and probability. However, our proof uses the theory of symmetric functions, which arises from algebraic combinatorics. In this sense, our proof of \hyperref[process]{Theorem \ref*{process}} is the first to put the derivation of the correlation functions $\rho_{\textbf{S}}$ in a combinatorial setting. One might find this particularly appealing because many applications of the Schur measure and Schur process are to combinatorial questions.

The remainder of this article is organized as follows. In Section 2.1, we will recall several facts about symmetric functions; in Section 2.2, we will derive contour integral expressions for observables for the Schur measure; in Section 2.3, we will use this to obtain the correlation functions of the Schur measure; and in Section 3, we will generalize by evaluating the correlation functions of the Schur process. 

\section{Correlation Functions of the Schur Measure}

\subsection{Schur Polynomials and Scalar Products} 

In this section, we will state several facts about symmetric functions that will be used later in the article; many of these results can also be found in Macdonald's text \cite{22}. 

Suppose that $X = (x_1, x_2, \ldots , x_n)$ is a finite set of complex variables. Define the {\itshape power sums} by setting $p_0 (X) = 1$ and $p_k (X) = \sum_{i = 1}^n x_i^k$ for each positive integer $k$. For each partition $\lambda = (\lambda_1, \lambda_2, \ldots )$, set $p_{\lambda} (X) = \prod_{i = 1}^{\infty} p_{\lambda_i} (X)$.

Now let $Z = (z_1, z_2, \ldots , z_k)$ be a finite set of complex variables. Let $\Lambda (Z)$ denote the ring of symmetric polynomials in $Z$; equivalently, the elements of $Z$ are polynomial functions from $\mathbb{C}^n$ to $\mathbb{C}$ that are invariant under permutations of their arguments. Consider the ``truncated" bilinear form on $\Lambda(Z)$, which is fixed by setting 
\begin{flalign}
\label{scalarproduct}
\langle p_{\lambda} (Z), p_{\mu} (Z) \rangle_Z = \textbf{1}_{\lambda = \mu} \textbf{1}_{|\lambda| \le k} \displaystyle\prod_{i = 1}^{\infty} i^{m_{i} (\lambda)} (m_{i} (\lambda))!
\end{flalign}

\noindent for each $\lambda, \mu \in \mathbb{Y}$. 

This bilinear form has been used and discussed in \cite{22} in a slightly different setting, in which $Z$ is an infinite set of formal variables. However, many facts about the bilinear form that hold for infinite $k$ also have analogues for finite $k$. For instance, when $k = \infty$, the Schur functions $s_{\lambda} (Z)$ form an orthonormal basis of $\Lambda (Z)$ under this bilinear form (see Chapter 1, Section 4 of \cite{22}). Using this and the fact that the power sums $\{ p_{\lambda} (Z) \}_{\lambda \in \mathbb{Y}_n}$ span the space of degree $n$ elements of $\Lambda (Z)$ for each nonnegative integer $n$, we obtain the following result. 

\begin{prop}
Suppose that $Z = (z_1, z_2, \ldots , z_k)$ is a finite set of complex variables. Then, $\langle s_{\lambda} (Z), s_{\mu} (Z) \rangle_Z = \textbf{1}_{\lambda = \mu} \textbf{1}_{|\lambda| \le k}$. 
\end{prop}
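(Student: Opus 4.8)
The plan is to transfer the classical orthonormality of the Schur functions under the Hall inner product on the ring of symmetric functions in infinitely many variables down to the finite-variable setting, via the specialization homomorphism. Write $\Lambda$ for the ring of symmetric functions in infinitely many variables, equipped with the Hall inner product $\langle \cdot, \cdot \rangle$ determined by $\langle p_\lambda, p_\mu \rangle = \textbf{1}_{\lambda = \mu} \prod_{i = 1}^\infty i^{m_i(\lambda)} (m_i(\lambda))!$, under which the Schur functions $s_\lambda$ are orthonormal. Let $\phi \colon \Lambda \to \Lambda(Z)$ be the surjective graded ring homomorphism sending $p_i \mapsto p_i(Z)$; it carries $s_\lambda$ to $s_\lambda(Z)$. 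On $\Lambda$ I would also introduce the truncated form $\langle \cdot, \cdot \rangle^{\mathrm{tr}}$ defined on the genuine power-sum basis $\{ p_\lambda \}_{\lambda \in \mathbb{Y}}$ by $\langle p_\lambda, p_\mu \rangle^{\mathrm{tr}} = \textbf{1}_{\lambda = \mu} \textbf{1}_{|\lambda| \le k} \prod_{i = 1}^\infty i^{m_i(\lambda)} (m_i(\lambda))!$; since the $p_\lambda$ are a basis of $\Lambda$ this is unambiguous, and it coincides with $\langle \cdot, \cdot \rangle$ in each degree at most $k$ while vanishing in each degree exceeding $k$.

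The main obstacle is that in finitely many variables the power-sum products $p_\lambda(Z)$ are linearly dependent, so one must confirm that the defining formula \eqref{scalarproduct} is consistent; equivalently, that $\langle \cdot, \cdot \rangle^{\mathrm{tr}}$ descends to a well-defined form on $\Lambda(Z) = \Lambda / \ker \phi$ agreeing with the one given. I would establish this by showing that $\ker \phi$ lies in the radical of $\langle \cdot, \cdot \rangle^{\mathrm{tr}}$. Restricting $\phi$ to the degree-$n$ component gives a surjection $\Lambda^n \to \Lambda^n(Z)$; since $\dim \Lambda^n$ equals the number of partitions of $n$, whereas $\dim \Lambda^n(Z)$ equals the number of partitions of $n$ into at most $k$ parts, these dimensions agree exactly when $n \le k$, so $\phi$ is an isomorphism in each degree at most $k$ and the graded subspace $\ker \phi$ has no component there. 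In degrees exceeding $k$ the form $\langle \cdot, \cdot \rangle^{\mathrm{tr}}$ is identically zero, and pairings across distinct degrees vanish, so every element of $\ker \phi$ pairs trivially with all of $\Lambda$, which yields the descent.

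With well-definedness in hand, the descended form satisfies $\langle \phi(f), \phi(g) \rangle_Z = \langle f, g \rangle^{\mathrm{tr}}$, so in particular $\langle s_\lambda(Z), s_\mu(Z) \rangle_Z = \langle s_\lambda, s_\mu \rangle^{\mathrm{tr}}$. Because $s_\lambda$ is homogeneous of degree $|\lambda|$, its power-sum expansion involves only $p_\nu$ with $|\nu| = |\lambda|$. If $|\lambda| = |\mu| \le k$, the truncation is inert on these terms, so $\langle s_\lambda, s_\mu \rangle^{\mathrm{tr}} = \langle s_\lambda, s_\mu \rangle = \textbf{1}_{\lambda = \mu}$ by the stated Hall orthonormality. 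If $|\lambda| > k$, every power sum occurring in the expansion of $s_\lambda$ is annihilated by the truncation and the pairing is $0$; and if $|\lambda| \ne |\mu|$ it vanishes by the grading. Collecting the cases gives $\langle s_\lambda(Z), s_\mu(Z) \rangle_Z = \textbf{1}_{\lambda = \mu} \textbf{1}_{|\lambda| \le k}$, as claimed.
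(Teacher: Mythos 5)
Your proof is correct and takes essentially the same route as the paper: both transfer the Hall-inner-product orthonormality of the Schur functions in infinitely many variables to $\Lambda(Z)$ via the power-sum structure, with the truncation $\textbf{1}_{|\lambda| \le k}$ handled by degree considerations. Your write-up is in fact more careful than the paper's one-sentence derivation, since you also verify that the form defined by (\ref{scalarproduct}) is well defined despite the linear dependence of the $p_{\lambda}(Z)$ in finitely many variables, a point the paper leaves implicit.
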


From this, we deduce the following corollary, which can be viewed as a definition for the skew-Schur functions different from the one given in Chapter 1, Section 5 of \cite{22}. In the below, $s_{\lambda} (X, Z)$ denotes the Schur polynomial associated with $\lambda$ in the union of the variables given by $X \cup Z$. 

\begin{cor}
\label{skew}

Suppose that $X = (x_1, x_2, \ldots , x_n)$ and $Z = (z_1, z_2, \ldots , z_k)$ are finite sets of complex variables, and suppose that $\lambda, \mu \in \mathbb{Y}$. Then $\textbf{1}_{|\mu| \le k} s_{\lambda / \mu} (X) = \langle s_{\lambda} (X, Z), s_{\mu} (Z) \rangle_Z$. 
\end{cor}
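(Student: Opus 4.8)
The plan is to expand $s_{\lambda}(X, Z)$ in the Schur basis of $\Lambda(Z)$ and then evaluate the resulting scalar product termwise using the preceding Proposition. The starting point is the branching (coproduct) rule for Schur polynomials, $s_{\lambda}(X, Z) = \sum_{\nu \in \mathbb{Y}} s_{\lambda / \nu}(X) s_{\nu}(Z)$, which follows from the combinatorial (semistandard tableau) definition of the Schur polynomials and is a standard property of the skew-Schur functions recalled in Chapter 1, Section 5 of \cite{22}. Viewed as an element of $\Lambda(Z)$, the polynomial $s_{\lambda}(X, Z)$ is thus a linear combination of the $s_{\nu}(Z)$ whose coefficients $s_{\lambda / \nu}(X)$ lie in $\Lambda(X)$ and hence are scalars for the bilinear form $\langle \cdot, \cdot \rangle_Z$; moreover, only the finitely many $\nu \subseteq \lambda$ contribute a nonzero term.

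Next I would apply $\langle \, \cdot \, , s_{\mu}(Z) \rangle_Z$ to both sides. By the bilinearity of the form in its first argument and the fact that each $s_{\lambda / \nu}(X)$ is independent of $Z$, the coefficients factor out, giving $\langle s_{\lambda}(X, Z), s_{\mu}(Z) \rangle_Z = \sum_{\nu} s_{\lambda / \nu}(X) \langle s_{\nu}(Z), s_{\mu}(Z) \rangle_Z$. The preceding Proposition evaluates each inner product as $\langle s_{\nu}(Z), s_{\mu}(Z) \rangle_Z = \textbf{1}_{\nu = \mu} \textbf{1}_{|\nu| \le k}$, so every term with $\nu \neq \mu$ vanishes and the surviving term is present precisely when $|\mu| \le k$. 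This collapses the sum to $\textbf{1}_{|\mu| \le k} s_{\lambda / \mu}(X)$, which is the claimed identity.

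The only point requiring care is the justification that the bilinear form, which is defined a priori only on $\Lambda(Z)$ via \eqref{scalarproduct}, may be applied to $s_{\lambda}(X, Z)$ with the $X$-dependent coefficients treated as constants. I expect this to be the main (and essentially the only) obstacle: one must regard $s_{\lambda}(X, Z)$ as an element of $\Lambda(X) \otimes \Lambda(Z)$ and let $\langle \, \cdot \, , s_{\mu}(Z) \rangle_Z$ act on the second tensor factor alone. Once the branching rule is written in this tensor form, the extension is immediate from bilinearity, and no well-definedness issue arises, since the Proposition already handles the evaluation on the genuine elements $s_{\nu}(Z) \in \Lambda(Z)$, including the vanishing when $|\nu| > k$ that makes the truncated form consistent with the identity being proved.
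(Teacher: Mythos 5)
Your proposal is correct and matches the paper's intended argument: the paper gives no separate proof, simply stating that the corollary follows from the preceding Proposition, and the deduction it has in mind is exactly yours --- expand $s_{\lambda}(X, Z) = \sum_{\nu} s_{\lambda/\nu}(X)\, s_{\nu}(Z)$ via the branching rule and apply orthogonality $\langle s_{\nu}(Z), s_{\mu}(Z)\rangle_Z = \textbf{1}_{\nu = \mu}\textbf{1}_{|\nu| \le k}$ termwise. Your care about treating the $X$-dependent coefficients as scalars (equivalently, working in $\Lambda(X) \otimes \Lambda(Z)$, or simply noting that for fixed complex $x_i$ the polynomial $s_{\lambda}(X,Z)$ is a genuine element of $\Lambda(Z)$) is a sound and complete resolution of the only delicate point.
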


Until now, we required that the arguments of the bilinear form be elements of $\Lambda (Z)$ and thus polynomials; however, this assumption may be weakened. Specifically, suppose that $a(Z) = \sum_{\lambda \in \mathbb{Y}} a_{\lambda} p_{\lambda} (Z)$ and $b (Z) = \sum_{\lambda \in \mathbb{Y}} b_{\lambda} p_{\lambda} (Z)$ are convergent power series in $Z$; since $a$ and $b$ are not necessarily polynomials, they are not necessarily elements of $\Lambda (Z)$. Consider the ``truncations" $a^{(k)} (Z) = \sum_{|\lambda| \le k} a_{\lambda} p_{\lambda} (Z)$ and $b^{(k)} (Z) = \sum_{|\lambda| \le k} b_{\lambda} p_{\lambda} (Z)$; both $a^{(k)} (Z)$ and $b^{(k)} (Z)$ are elements of $\Lambda (Z)$. We may define the scalar product $\langle a(Z), b(Z) \rangle_Z$ to be equal to $\langle a^{(k)} (Z), b^{(k)} (Z) \rangle_Z$. The resulting scalar product is still linear on the space of convergent power series in $Z$ due to the $\textbf{1}_{|\lambda| \le k}$ term in (\hyperref[scalarproduct]{\ref*{scalarproduct}}). 

Now, in the proof of \hyperref[process]{Theorem \ref*{process}}, we will put the skew-Schur polynomials appearing in the weight functions $\mathcal{W}_{X, Y}$ in terms of scalar products of Schur functions using \hyperref[skew]{Corollary \ref*{skew}} (see \hyperref[weightscalar]{Lemma \ref*{weightscalar}}). After performing several operations on the resulting expression (see \hyperref[weightoperators]{Proposition \ref*{weightoperators}} and \hyperref[contoursprocess]{Proposition \ref*{contoursprocess}}), we will obtain scalar products between certain types of rational functions. 

Specifically, recall that the {\itshape Cauchy product} is defined by 
\begin{flalign}
\label{equalityf}
F(X; Y) = \displaystyle\prod_{(x, y) \in X \times Y} (1 - xy)^{-1} = \exp \left( \displaystyle\sum_{j = 1}^{\infty} \displaystyle\frac{p_j (X) p_j (Y)}{j} \right), 
\end{flalign}

\noindent where the second equality (\hyperref[equalityf]{\ref*{equalityf}}) holds when the exponential converges (see Chapter 6, Section 2 of \cite{22}). For any parameter $q \in \mathbb{C}$, also define the product 
\begin{flalign}
\label{equalityh} 
H_q (X; Y) = \displaystyle\frac{F(X; Y)}{F(qX; Y)} = \exp \left( \displaystyle\sum_{j = 1}^{\infty} \displaystyle\frac{p_j (X) p_j (Y) (1 - q^j)}{j} \right),
\end{flalign}

\noindent where $qX = ( qx_1, qx_2, \ldots , qx_n)$ and the second equality (\hyperref[equalityh]{\ref*{equalityh}}) holds when the exponential converges (this follows from (\hyperref[equalityf]{\ref*{equalityf}})). 

The scalar products appearing in the proof of \hyperref[process]{Theorem \ref*{process}} will be between products of $F(X; Y)$ and $H_q (X; Y)$ (see \hyperref[fhscalar]{Lemma \ref*{fhscalar}}). Next, we will require a way to evaluate these scalar products. This will be done through the following lemma, which is similar to Proposition 2.3 of \cite{6}. 

\begin{lem}
\label{scalarseries}

Let $X$ be a finite set of complex variables, and let $Z = (z_1, z_2, \ldots )$ be an infinite set of complex variables. Let $Z_{[1, u]} = (z_1, z_2, \ldots , z_u)$ for each positive integer $u$, and suppose that $q_1, q_2, \ldots $ and $r_1, r_2, \ldots $ are power series in $X$. For each integer $u \ge 1$, let 
\begin{flalign*}
a \big( X, Z_{[1, u]} \big) &= \exp \left( \displaystyle\sum_{i=1}^\infty \displaystyle\frac{p_i \big(Z_{[1, u]} \big) q_i (X)}{i} \right); \quad b \big( X, Z_{[1, u]} \big) = \exp \left( \displaystyle\sum_{i=1}^\infty \displaystyle\frac{p_i \big(Z_{[1, u]} \big) r_i (X)}{i} \right); \\
c(X) &= \exp \left( \displaystyle\sum_{i=1}^\infty \displaystyle\frac{q_i (X) r_i (X)}{i} \right). 
\end{flalign*}

\noindent Suppose that $c(X)$ converges absolutely and that $a(X, Z_{[1, u]})$ and $b(X, Z_{[1, u]})$ converge absolutely for each positive integer $u$. Then,
\begin{flalign}
\label{scalarlimit}
\lim_{u \rightarrow \infty} \big\langle a \big(X, Z_{[1, u]} \big), b \big(X, Z_{[1, u]} \big) \big\rangle_{Z_{[1, u]}} = c(X).
\end{flalign}
\end{lem}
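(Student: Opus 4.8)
The plan is to compute the scalar product $\langle a, b\rangle_{Z_{[1,u]}}$ explicitly for each fixed $u$ by expanding $a$ and $b$ in the power-sum basis, and then to recognize the result as the partial sum, over partitions of size at most $u$, of the series defining $c(X)$. The single combinatorial tool needed is the exponential expansion
\[
\exp\left( \sum_{i=1}^\infty \frac{x_i}{i}\right) = \sum_{\lambda\in\mathbb{Y}} \frac{1}{z_\lambda}\prod_{i=1}^\infty x_i^{m_i(\lambda)}, \qquad z_\lambda := \prod_{i=1}^\infty i^{m_i(\lambda)}(m_i(\lambda))!,
\]
valid for any commuting quantities $x_i$ and obtained by expanding each factor $\exp(x_i/i)$ and collecting the monomial $\prod_i x_i^{m_i(\lambda)}$; note that $z_\lambda$ is exactly the normalization constant appearing in \eqref{scalarproduct}.

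Taking $x_i = p_i(Z_{[1, u]})\, q_i(X)$ and using $\prod_i \big(p_i(Z_{[1,u]})\big)^{m_i(\lambda)} = p_\lambda(Z_{[1,u]})$, the identity gives the power-sum expansion of $a(X, Z_{[1,u]})$ as a convergent power series in $Z_{[1,u]}$ whose coefficient of $p_\lambda(Z_{[1,u]})$ equals $z_\lambda^{-1}\prod_i q_i(X)^{m_i(\lambda)}$; the analogous expansion with $x_i = p_i(Z_{[1,u]})\, r_i(X)$ gives the coefficient $z_\lambda^{-1}\prod_i r_i(X)^{m_i(\lambda)}$ for $b(X, Z_{[1,u]})$. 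The absolute convergence of $a$ and $b$ assumed in the statement is what legitimizes these expansions as the convergent power series to which the bilinear form is applied. By the definition of the scalar product on convergent power series via truncation, $\langle a, b\rangle_{Z_{[1,u]}} = \langle a^{(u)}, b^{(u)}\rangle_{Z_{[1,u]}}$, where the superscript discards all power-sum terms indexed by partitions of size exceeding $u$; since only finitely many partitions satisfy $|\lambda|\le u$, each truncation is a genuine polynomial in $Z_{[1,u]}$ and the scalar product is a finite bilinear sum. Invoking the orthogonality relation $\langle p_\lambda(Z_{[1,u]}), p_\mu(Z_{[1,u]})\rangle_{Z_{[1,u]}} = \mathbf{1}_{\lambda=\mu}\mathbf{1}_{|\lambda|\le u}\, z_\lambda$ from \eqref{scalarproduct}, only the diagonal terms survive and the factor $z_\lambda$ cancels one of the two coefficient factors $z_\lambda^{-1}$, yielding
\[
\big\langle a(X, Z_{[1,u]}), b(X, Z_{[1,u]})\big\rangle_{Z_{[1,u]}} = \sum_{\substack{\lambda\in\mathbb{Y}\\ |\lambda|\le u}} \frac{1}{z_\lambda}\prod_{i=1}^\infty \big(q_i(X)\, r_i(X)\big)^{m_i(\lambda)}.
\]

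Finally, applying the exponential expansion once more with $x_i = q_i(X)\, r_i(X)$ identifies $c(X) = \sum_{\lambda\in\mathbb{Y}} z_\lambda^{-1}\prod_i (q_i(X)\, r_i(X))^{m_i(\lambda)}$, so the right-hand side above is exactly the partial sum of this series over partitions of size at most $u$. Because $c(X)$ is assumed to converge absolutely, the series converges unconditionally; hence letting $u\to\infty$ removes the constraint $|\lambda|\le u$ and produces $\lim_{u\to\infty}\langle a, b\rangle_{Z_{[1,u]}} = c(X)$, which is \eqref{scalarlimit}. I expect the main obstacle to be the convergence bookkeeping rather than the algebra: one must verify that the power-sum expansions of $a$ and $b$ converge absolutely so that the term-by-term evaluation of the bilinear form is valid, and that the absolute convergence of $c(X)$ indeed justifies interchanging the limit in $u$ with the summation over $\lambda$. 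Since each individual scalar product reduces to a finite sum and the only genuinely infinite step is the final passage $u\to\infty$, the hypotheses are precisely calibrated to close this gap, and no deeper analytic machinery should be needed.
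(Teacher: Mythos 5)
Your proposal is correct and follows essentially the same route as the paper's proof: expand $a$ and $b$ in the power-sum basis via the exponential identity (the paper's equations (\ref{exponentiation}) and (\ref{exponentialsum}) combine into exactly your expansion with normalization $z_\lambda$), apply the truncated orthogonality relation (\ref{scalarproduct}) to reduce the scalar product to the partial sum of the series for $c(X)$ over partitions with $|\lambda|\le u$, and invoke absolute convergence of $c(X)$ to pass to the limit $u\to\infty$. The only difference is notational: the paper writes the expansion coefficients as $\prod_{i=1}^{\ell(\lambda)} q_{\lambda_i}(X) r_{\lambda_i}(X)$ indexed by parts, while you write $\prod_i (q_i(X) r_i(X))^{m_i(\lambda)}$ indexed by multiplicities, which are identical.
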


\begin{proof}
Observe that if $u_1, u_2, \ldots $ are power series in $Z$ and $v_1, v_2, \ldots $ are power series in $X$, then 
\begin{flalign}
\label{exponentiation}
\left( \displaystyle\sum_{i=1}^\infty \displaystyle\frac{u_i (Z) v_i (X)}{i} \right)^m = \displaystyle\sum_{\ell (\lambda) = m} \displaystyle\frac{m! \prod_{i = 1}^{\ell (\lambda)} u_{\lambda_i} (Z) v_{\lambda_i} (X)}{ \prod_{i = 1}^{\infty} i^{m_i (\lambda)} m_i (\lambda)!},
\end{flalign}

\noindent for any nonnegative integer $m$, where the sum is ranged over the partitions $\lambda = (\lambda_1, \lambda_2, \ldots )$ of length $m$. Applying (\hyperref[exponentiation]{\ref*{exponentiation}}) with $(u_i, v_i)$ equal to $(p_i, q_i)$, $(p_i, r_i)$, and $(q_i, r_i)$ and using the equality 
\begin{flalign}
\label{exponentialsum}
\exp \left( \displaystyle\sum_{i=1}^\infty \displaystyle\frac{u_i (Z) v_i (X)}{i} \right) = \displaystyle\sum_{j=0}^\infty \displaystyle\frac{1}{j!} \left( \displaystyle\sum_{i=1}^\infty \displaystyle\frac{u_i (Z) v_i (X)}{i} \right)^j
\end{flalign} 	

\noindent yields 
\begin{flalign}
\label{finitescalar}
\big\langle a \big( X, Z_{[1, u]} \big) , & b \big( X, Z_{[1, u]} \big) \big\rangle_{Z_{[1, u]}} \nonumber \\
&= \Bigg\langle \displaystyle\sum_{j = 0}^{\infty} \displaystyle\sum_{\ell (\lambda) = j} \displaystyle\frac{\prod_{i = 1}^{\ell (\lambda)} p_{\lambda_i} \big( Z_{[1, u]} \big) q_{\lambda_i} (X)}{\prod_{i = 1}^{\infty} i^{m_i (\lambda)} (m_i (\lambda))!} , \displaystyle\sum_{j = 0}^{\infty} \displaystyle\sum_{\ell (\lambda) = j} \displaystyle\frac{\prod_{i = 1}^{\ell (\lambda)} p_{\lambda_i} \big( Z_{[1, u]} \big) r_{\lambda_i} (X)}{\prod_{i = 1}^{\infty} i^{m_i (\lambda)} (m_i (\lambda))!} \Bigg\rangle_{Z_{[1, u]}} \nonumber \\
& = \displaystyle\sum_{j = 0}^{\infty} \displaystyle\sum_{\ell (\lambda) = j} \displaystyle\frac{\textbf{1}_{|\lambda| \le u} \prod_{i = 1}^{\ell (\lambda)} q_{\lambda_i} (X) r_{\lambda_i} (X)}{\prod_{i = 1}^{\infty} i^{m_i (\lambda)} (m_i (\lambda))!}
\end{flalign}

\noindent for all positive integers $u$, due to (\hyperref[scalarproduct]{\ref*{scalarproduct}}). Applying (\hyperref[exponentiation]{\ref*{exponentiation}}), (\hyperref[exponentialsum]{\ref*{exponentialsum}}), taking the limit as $u$ tends to $\infty$ in (\hyperref[finitescalar]{\ref*{finitescalar}}), and using absolute convergence of $c(X)$ then yields (\hyperref[scalarlimit]{\ref*{scalarlimit}}). 
\end{proof}

\subsection{Macdonald Difference Operators}
In this section, we will use the methods from Chapter 2 of \cite{5} to obtain contour integral expressions for a large class of observables for the Schur measure. 

Let $q \in \mathbb{C}$ be a parameter satisfying $|q| \in [0, 1)$, and let $X = (x_1, x_2, \ldots , x_n)$ be a finite set of complex variables. Let $T_{q, i}$ be the operator on $\Lambda (X)$ that sends any symmetric function $f(x_1, x_2, \ldots , x_n)\in \Lambda (X)$ to the symmetric function $f(x_1, x_2, \ldots, x_{i-1}, qx_i, x_{i+1}, \ldots , x_n) \in \Lambda (X)$. Define the {\itshape Macdonald $q$-difference operators} $D_{n; q}^r$ on $\Lambda (X)$ by 
 \begin{flalign*}
D_{n; q}^r = q^{\binom{r}{2}} \displaystyle\sum_{|I| = r} \displaystyle\prod_{\substack{i\in I \\ j \not\in I}} \displaystyle\frac{qx_i - x_j}{x_i - x_j} \displaystyle\prod_{i \in I}  T_{q, i},
 \end{flalign*}
 
\noindent where $I$ ranges over all subsets of $\{ 1, 2, \ldots , n \}$ of size $r$. The Macdonald $q$-difference operators are special $q = t$ cases of the Macdonald $(q, t)$-difference operators given in Chapter 6, Section 3 of \cite{22}. 

In \cite{5}, the variant of the Macdonald difference operator 
\begin{flalign*}
\tilde{D}_{n; q}^r = q^{- \binom{n}{2}} D_{n; q}^{n - r} T_{q^{-1}} 
\end{flalign*}

\noindent is defined, where $T_{q^{-1}} (F) (x_1, x_2, \ldots , x_n) = F(q^{-1} x_1, q^{-1} x_2, \ldots , q^{-1} x_n)$. We will use the operators $\tilde{D}_{n; q}^r$ instead of the operators $D_{n, q}^r$. 

The following result from Chapter 6, Section 4 of \cite{22} shows that the Schur polynomials are eigenfunctions of $\tilde{D}_{n; q}^r$. In the below, $e_r$ denotes the $r$th elementary symmetric polynomial. 

\begin{prop}
\label{eigenfunction}

Suppose $q \in \mathbb{C}$ satisfies $|q| \in [0, 1)$. For each $\lambda \in \mathbb{Y}$, the Schur polynomial $s_{\lambda} (X)$ is an eigenfunction of $\tilde{D}_{n; q}^r$ with eigenvalue $e_r (q^{1 - \lambda_1 - n}, q^{2 - \lambda_2 - n}, \ldots , q^{-\lambda_n})$. 
\end{prop}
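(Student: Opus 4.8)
The plan is to treat the eigenfunction property of the \emph{unmodified} operator $D_{n;q}^s$ as the one nontrivial input and to reduce the statement for $\tilde{D}_{n;q}^r$ to it by a purely algebraic computation. Concretely, since $D_{n;q}^s$ is exactly the $t = q$ specialization of the Macdonald $(q,t)$-difference operator from Chapter 6, Section 3 of \cite{22}, the eigenvalue result of Chapter 6, Section 4 of \cite{22} gives that $s_\lambda (X) = P_\lambda (X; q, q)$ is an eigenfunction of $D_{n;q}^s$ with eigenvalue $e_s (\xi_1, \xi_2, \ldots, \xi_n)$, where I abbreviate $\xi_i = q^{\lambda_i + n - i}$ for $1 \le i \le n$. (The normalizing factor $q^{\binom{s}{2}}$ in the definition of $D_{n;q}^s$ is exactly what forces this eigenvalue into the clean elementary-symmetric form, rather than a rational one.) I would import this as a black box; everything that remains is bookkeeping.

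First I would unwind the definition $\tilde{D}_{n;q}^r = q^{-\binom{n}{2}} D_{n;q}^{n-r} T_{q^{-1}}$ applied to $s_\lambda$. Since $s_\lambda$ is homogeneous of degree $|\lambda|$ and $T_{q^{-1}}$ rescales all $n$ variables by $q^{-1}$, I have $T_{q^{-1}} s_\lambda = q^{-|\lambda|} s_\lambda$. Applying the imported eigenvalue of $D_{n;q}^{n-r}$ (that is, taking $s = n - r$) then yields
\[
\tilde{D}_{n;q}^r s_\lambda = q^{-\binom{n}{2} - |\lambda|}\, e_{n-r}(\xi_1, \ldots, \xi_n)\, s_\lambda ,
\]
so it remains only to identify the scalar $q^{-\binom{n}{2} - |\lambda|} e_{n-r}(\xi_1, \ldots, \xi_n)$ with the claimed eigenvalue $e_r (q^{1 - \lambda_1 - n}, \ldots, q^{-\lambda_n}) = e_r (\xi_1^{-1}, \ldots, \xi_n^{-1})$.

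This last identification rests on the elementary-symmetric duality $e_{n-r}(\xi_1, \ldots, \xi_n) = \big( \prod_{i=1}^n \xi_i \big)\, e_r (\xi_1^{-1}, \ldots, \xi_n^{-1})$, which follows immediately from $\sum_{|I| = r} \prod_{i \in I} \xi_i^{-1} = \big( \prod_i \xi_i \big)^{-1} \sum_{|I| = r} \prod_{j \notin I} \xi_j$ once one matches each $r$-subset $I$ with its complement. Since $\prod_{i=1}^n \xi_i = q^{\sum_i (\lambda_i + n - i)} = q^{|\lambda| + \binom{n}{2}}$, the prefactor $q^{-\binom{n}{2} - |\lambda|}$ cancels $\prod_i \xi_i$ exactly, leaving $e_r (\xi_1^{-1}, \ldots, \xi_n^{-1})$, and a check that $\xi_i^{-1} = q^{i - \lambda_i - n}$ confirms the arguments agree with those stated. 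The only genuinely deep content is the imported eigenfunction property of the Macdonald operator; in the elementary part the one thing to get right is the indexing, namely that the rescaling $T_{q^{-1}}$, the factor $q^{-\binom{n}{2}}$, and the passage from index $n - r$ to index $r$ conspire precisely to invert the arguments $\xi_i$.
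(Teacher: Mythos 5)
Your proof is correct and takes essentially the same route as the paper: the paper offers no argument of its own, citing Chapter 6, Section 4 of Macdonald's book and recording the eigenvalue of the unmodified operator $D_{n;q}^r$ in the Remark that follows, which is precisely the input you import as a black box. Your computation --- homogeneity giving $T_{q^{-1}} s_\lambda = q^{-|\lambda|} s_\lambda$, the cited eigenvalue of $D_{n;q}^{n-r}$, and the duality $e_{n-r}(\xi_1,\ldots,\xi_n) = \big(\prod_{i=1}^n \xi_i\big) e_r(\xi_1^{-1},\ldots,\xi_n^{-1})$ with $\prod_{i=1}^n \xi_i = q^{|\lambda|+\binom{n}{2}}$ --- is exactly the bookkeeping that the citation (via the definition of $\tilde{D}_{n;q}^r$ from Borodin--Corwin) leaves implicit, and the exponent arithmetic checks out.
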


\begin{rem}
For each $\lambda \in \mathbb{Y}$, the Schur polynomial $s_{\lambda} (X)$ is also an eigenfunction of $D_{n, q}^r$, but with eigenvalue $e_r (q^{n + \lambda_1 - 1}, q^{n + \lambda_2 - 2}, \ldots , q^{\lambda_n})$.  
\end{rem}

From Remark 2.2.11 of \cite{5}, we also have the following way of expressing the action of $\tilde{D}_{n; q}^1$ on certain types of functions $G$. 

\begin{prop}
\label{singleoperator}

Suppose that $q \in \mathbb{C}$ satisfies $|q| \in [0, 1)$, that $g$ is a rational function of one variable, and that $G$ is a function of $n$ variables satisfying $G(u_1, u_2, \ldots , u_n) = \prod_{i = 1}^n g(u_i)$. Further suppose that there exist positive numbers $1\le r < s$ such that for each $z \in \mathbb{C}$ with $|z| \in [r, s]$, we have that $g(z^{-1}) \ne 0$ and that $q^{-1} z^{-1}$ is not a pole of $g$. Then, for any $x_1, x_2, \ldots , x_n \in (s^{-1}, r^{-1})$, we have that  
\begin{flalign*}
q^n \tilde{D}_{n; q}^1 G(X) = \displaystyle\frac{G(X)}{2\pi i} \displaystyle\oint \displaystyle\frac{q}{z - zq} \left( \displaystyle\prod_{k = 1}^n \displaystyle\frac{1 - qz x_k}{1 - z x_k} \right) \left( \displaystyle\frac{g(q^{-1} z^{-1})}{g(z^{-1})} \right) dz,
\end{flalign*}

\noindent where the integral is along the union of the positively oriented circle $|z| = r$ and the negatively oriented circle $|z| = s$. 
\end{prop}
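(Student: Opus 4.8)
The plan is to prove the identity by computing both sides explicitly and matching them: on the left I would unwind the definition $\tilde{D}_{n;q}^1 = q^{-\binom{n}{2}} D_{n;q}^{n-1} T_{q^{-1}}$ to reduce the operator action on $G(X) = \prod_{k=1}^n g(x_k)$ (a symmetric rational function, on which the shift operators still make sense) to a finite sum indexed by $\{1, 2, \ldots, n\}$, and on the right I would evaluate the contour integral by the residue theorem. The two computations should produce the same finite sum.

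For the left side, I would first apply $T_{q^{-1}}$ to $G$, obtaining $\prod_{k=1}^n g(q^{-1} x_k)$, and then expand $D_{n;q}^{n-1}$ as a sum over the size-$(n-1)$ subsets $I$ of $\{1, \ldots, n\}$. Writing $I = \{1, \ldots, n\} \setminus \{m\}$, the rational coefficient is $\prod_{i \ne m} \frac{q x_i - x_m}{x_i - x_m}$ and the shift $\prod_{i \ne m} T_{q,i}$ rescales every variable except $x_m$ by $q$. Since $T_{q^{-1}}$ had already scaled each variable by $q^{-1}$, this shift restores $g(x_i)$ in every slot $i \ne m$ while leaving $g(q^{-1} x_m)$ in slot $m$, so that $\left( \prod_{i \ne m} T_{q,i} \right) T_{q^{-1}} G = \frac{g(q^{-1} x_m)}{g(x_m)} G(X)$. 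Collecting the scalar prefactors $q^{-\binom{n}{2}} q^{\binom{n-1}{2}} = q^{-(n-1)}$, I obtain $q^n \tilde{D}_{n;q}^1 G(X) = q\, G(X) \sum_{m=1}^n \left( \prod_{i \ne m} \frac{q x_i - x_m}{x_i - x_m} \right) \frac{g(q^{-1} x_m)}{g(x_m)}$.

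For the right side, I would analyze the poles of the integrand in the closed annulus $r \le |z| \le s$. The factor $\frac{q}{z - zq}$ has its only pole at $z = 0$, which lies strictly inside the inner circle, while the factor $\frac{g(q^{-1} z^{-1})}{g(z^{-1})}$ is holomorphic on the whole closed annulus — this is exactly what the two hypotheses on $g$ (that $g(z^{-1}) \ne 0$ and that $q^{-1} z^{-1}$ is not a pole of $g$ whenever $|z| \in [r,s]$) are arranged to guarantee. Thus the only poles in the open annulus are the simple poles of $\prod_k (1 - z x_k)^{-1}$ at $z = x_k^{-1}$, and the condition $x_k \in (s^{-1}, r^{-1})$ places each $x_k^{-1}$ in $(r, s)$. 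Since the prescribed contour (inner circle negatively oriented, outer circle positively oriented) is the positively oriented boundary of the annulus, the integral equals the sum of the residues at $z = x_k^{-1}$. Using that the residue of $(1 - z x_k)^{-1}$ at $z = x_k^{-1}$ equals $-x_k^{-1}$, together with the algebraic simplification $\frac{1 - q x_k^{-1} x_i}{1 - x_k^{-1} x_i} = \frac{q x_i - x_k}{x_i - x_k}$, the factors at each pole collapse to a constant multiple of $\left( \prod_{i \ne k} \frac{q x_i - x_k}{x_i - x_k} \right) \frac{g(q^{-1} x_k)}{g(x_k)}$, and summing over $k$ reproduces the finite sum obtained from the operator side.

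The main difficulty is the residue bookkeeping and the accompanying analytic hygiene rather than any single conceptual step. I would need to verify carefully that no pole of the $g$-ratio intrudes into the annulus (precisely the role of the hypotheses on $g$), to track exactly the powers of $q$ and the orientation-induced signs, and to handle the degenerate case in which some $x_k$ coincide: there the poles of the integrand are no longer simple, but both sides of the claimed identity are analytic in $(x_1, \ldots, x_n)$ on the polydisc $x_k \in (s^{-1}, r^{-1})$ and agree on the dense open subset where the coordinates are distinct, so the identity extends by continuity.
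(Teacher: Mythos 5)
Your strategy---expand $q^n \tilde{D}_{n;q}^1$ into a finite sum by unwinding the definitions, evaluate the contour integral by residues, and match---is the natural one, and it is in fact the only route available here: the paper itself gives no proof of this proposition, quoting it from Remark 2.2.11 of \cite{5}. Your operator-side computation is correct, including the prefactor bookkeeping: $q^n \tilde{D}_{n;q}^1 G(X) = q\, G(X) \sum_{m=1}^n \bigl( \prod_{i \ne m} \frac{q x_i - x_m}{x_i - x_m} \bigr) \frac{g(q^{-1} x_m)}{g(x_m)}$. Your identification of the poles in the annulus, and of the role of the two hypotheses on $g$, is also correct.

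The gap is precisely the ``constant multiple'' that you decline to compute, and it is fatal to the proof as written. Carrying out the bookkeeping at $z = x_k^{-1}$: the factor $\frac{q}{z - zq}$ contributes $\frac{q x_k}{1-q}$, the factor $1 - qzx_k$ contributes $1 - q$, the remaining product contributes $\prod_{i \ne k} \frac{q x_i - x_k}{x_i - x_k}$, the $g$-ratio contributes $\frac{g(q^{-1}x_k)}{g(x_k)}$, and the residue of $(1 - z x_k)^{-1}$ is $-x_k^{-1}$; multiplying these gives
\[
\operatorname{Res}_{z = x_k^{-1}} \; = \; -\,q \prod_{i \ne k} \frac{q x_i - x_k}{x_i - x_k}\, \frac{g(q^{-1} x_k)}{g(x_k)}.
\]
Since (as you correctly observe) the prescribed contour is the positively oriented boundary of the annulus, the right-hand side of the proposition equals $-q\, G(X) \sum_k (\cdots)$, the \emph{negative} of the operator side. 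The cheapest check is $n = 1$, $g \equiv 1$: the left side is $q \tilde{D}_{1;q}^1 1 = q$ (consistent with Proposition \ref{eigenfunction}), while the integral side has the single residue $-q$. So the assertion that the residues ``reproduce the finite sum obtained from the operator side'' is exactly the step that is false: your method, executed carefully, proves the identity only after both orientations are reversed (inner circle positive, outer circle negative), equivalently with an overall minus sign. This is not a defect of your approach but a sign/orientation discrepancy in the statement as printed---one that propagates through the paper in this version (e.g.\ the prefactor $1/(2\pi)^{2d}$ in Proposition \ref{contourmeasure} and $1/4\pi^2$ in Theorem \ref{measure}, where the analogous computation with $|X| = |Y| = |T| = 1$ gives $\det \textbf{L}_T = -\rho_{\textbf{SM}}(T)$). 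A complete write-up must either compute this constant and flag the needed orientation reversal, or silently prove a statement that is off by $(-1)$ from the one claimed; as submitted, it does the latter.
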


The below proposition generalizes \hyperref[singleoperator]{Proposition \ref*{singleoperator}}. Throughout, for any set of variables $X$ and any operator $D$ on $\Lambda (X)$, we will let $[D]_X$ denote the action of $D$ on $X$. 

\begin{prop}
\label{multipleoperators}

Suppose that $q_1, q_2, \ldots, q_m \in \mathbb{C}$ are complex numbers with magnitudes less than $1$, that $g$ is a rational function of one variable, and that $G$ is a function of $n$ variables satisfying $G(u_1, u_2, \ldots , u_n) = \prod_{i = 1}^n g(u_i)$. Let $1\le r_1, r_2, \ldots , r_m$ and $1\le s_1, s_2, \ldots , s_m$ be positive numbers sufficiently close to $1$ such that $\max_{1\le i\le m} |q_i| s_i < \min_{1 \le i \le m} r_i \le \max_{1\le i\le m} r_i < \min_{1\le i\le m} s_i$. Suppose that $g(z^{-1}) \ne 0$ and that $q_i^{-1} z^{-1}$ is not a pole of $g$ for each integer $i \in [1, m]$ and for each $z \in \mathbb{C}$ satisfying $\max_{1 \le i\le m} r_i \le |z| \le \min_{1\le i\le m} s_i$. Then, for any $x_1, x_2, \ldots , x_n \in (\max_{1\le i\le m} s_i^{-1}, \min_{1\le i\le m} r_i^{-1})$, we have that
\begin{flalign}
\label{operators}
\Big( \displaystyle\prod_{j = 1}^m q_j^n \tilde{D}_{n; q_j}^1 \Big) G(X) &= \displaystyle\frac{G(X)}{(2\pi i)^m} \displaystyle\oint \cdots \displaystyle\oint \displaystyle\prod_{j = 1}^m \displaystyle\frac{q_j}{z_j - q_j z_j} \displaystyle\prod_{1\le j < k\le m} \displaystyle\frac{(q_k z_k - q_j z_j)(z_k - z_j)}{(z_k - q_j z_j)(q_k z_k - z_j)} \nonumber \\
& \qquad \qquad \qquad \quad \times \displaystyle\prod_{j = 1}^m \left( \displaystyle\prod_{k = 1}^n \displaystyle\frac{1 - q_j z_j x_k}{1 - z_j x_k} \right) \displaystyle\frac{g(q_j^{-1} z_j^{-1})}{g(z_j^{-1})} \displaystyle\prod_{j=1}^m dz_j,
\end{flalign}

\noindent where the contour for $z_j$ is along the union of the positively oriented circle $|z_j| = r_j$ and the negatively oriented circle $|z_j| = s_j$ for each integer $j \in [1, m]$. 
\end{prop}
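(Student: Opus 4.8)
The plan is to induct on $m$. Since the Schur polynomials form a basis of $\Lambda(X)$ and are simultaneous eigenfunctions of all the $\tilde{D}_{n;q_j}^1$ by \hyperref[eigenfunction]{Proposition \ref*{eigenfunction}}, these operators commute, so I may apply them in any order; I would peel off $q_m^n \tilde{D}_{n;q_m}^1$ and apply it last. The base case $m=1$ is exactly \hyperref[singleoperator]{Proposition \ref*{singleoperator}}. For the inductive step I would apply the inductive hypothesis to $\big( \prod_{j=1}^{m-1} q_j^n \tilde{D}_{n;q_j}^1 \big) G(X)$ and then apply $q_m^n \tilde{D}_{n;q_m}^1$ to the result. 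The crucial observation is that, viewed as a function of $X$, the $(m-1)$-fold integrand produced by the inductive hypothesis is again a product of single-variable functions:
\[ G(X) \prod_{j=1}^{m-1} \prod_{k=1}^n \frac{1 - q_j z_j x_k}{1 - z_j x_k} = \prod_{k=1}^n h(x_k), \qquad h(x) = g(x) \prod_{j=1}^{m-1} \frac{1 - q_j z_j x}{1 - z_j x}, \]
with $z_1, \dots, z_{m-1}$ regarded as fixed parameters on their contours. This is exactly the input required to invoke \hyperref[singleoperator]{Proposition \ref*{singleoperator}} one more time.

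Since $q_m^n \tilde{D}_{n;q_m}^1$ acts only on $X$ while the $(m-1)$-fold integration is over $z_1, \dots, z_{m-1}$, I would first move the operator inside the integral; as the operator is a finite sum of multiplications by rational functions of $X$ composed with the shifts $T_{q_m, i}$, this interchange is justified by the joint continuity of the integrand on the compact contours. Applying \hyperref[singleoperator]{Proposition \ref*{singleoperator}} with $g$ replaced by $h$ then evaluates $q_m^n \tilde{D}_{n;q_m}^1 \prod_{k=1}^n h(x_k)$, introducing the new contour variable $z_m$ together with the prefactor $\frac{q_m}{z_m - q_m z_m}$, the product $\prod_{k=1}^n \frac{1 - q_m z_m x_k}{1 - z_m x_k}$, and the ratio $h(q_m^{-1} z_m^{-1}) / h(z_m^{-1})$.

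The heart of the argument is the factorization of this last ratio. Clearing denominators in each factor of $\frac{1 - q_j z_j q_m^{-1} z_m^{-1}}{1 - z_j q_m^{-1} z_m^{-1}} \cdot \frac{1 - z_j z_m^{-1}}{1 - q_j z_j z_m^{-1}}$ gives
\[ \frac{h(q_m^{-1} z_m^{-1})}{h(z_m^{-1})} = \frac{g(q_m^{-1} z_m^{-1})}{g(z_m^{-1})} \prod_{j=1}^{m-1} \frac{(q_m z_m - q_j z_j)(z_m - z_j)}{(z_m - q_j z_j)(q_m z_m - z_j)}. \]
The $g$-ratio is precisely the $j=m$ factor $g(q_m^{-1} z_m^{-1})/g(z_m^{-1})$ in (\hyperref[operators]{\ref*{operators}}), while the product over $j$ supplies exactly the $k=m$ terms of the interaction product $\prod_{1 \le j < k \le m}$. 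Together with the $\prod_{1 \le j < k \le m-1}$ interaction terms and the factor $G(X)$ carried over from the inductive hypothesis, this reproduces every factor of (\hyperref[operators]{\ref*{operators}}) and closes the induction.

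I expect the main obstacle to be the contour bookkeeping rather than the algebra. Each application of \hyperref[singleoperator]{Proposition \ref*{singleoperator}} to $h$ requires $h(z_m^{-1}) \ne 0$ and that $q_m^{-1} z_m^{-1}$ not be a pole of $h$ on the relevant annulus, which beyond the hypotheses on $g$ demands $z_m \ne q_j z_j$, $q_m z_m \ne z_j$, and $z_m \ne z_j$ for all $j < m$ with $z_j$ on its contour. The separation $\max_i |q_i| s_i < \min_i r_i \le \max_i r_i < \min_i s_i$ is arranged precisely to secure the first two: for instance $|q_j z_j| \le |q_j| s_j < \min_i r_i \le |z_m|$ forces $z_m \ne q_j z_j$. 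To guarantee $z_m \ne z_j$ and to reconcile the annulus $[r_m, s_m]$ on which \hyperref[singleoperator]{Proposition \ref*{singleoperator}} needs the conditions on $g$ with the smaller annulus $[\max_i r_i, \min_i s_i]$ on which they are assumed, I would run the induction with auxiliary contours of pairwise distinct radii chosen inside $[\max_i r_i, \min_i s_i]$, and only at the end deform each contour out to the stated circles $|z_j| = r_j$ and $|z_j| = s_j$, verifying that no pole of the final integrand is crossed during the deformation.
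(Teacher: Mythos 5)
Your proposal is correct and follows essentially the same route as the paper's own proof: induct on $m$, peel off $q_m^n \tilde{D}_{n; q_m}^1$ and apply it last, absorb the $(m-1)$-fold integrand into the single-variable function $h(x) = g(x) \prod_{j=1}^{m-1} \frac{1 - q_j z_j x}{1 - z_j x}$ (the paper's $g_1$), and invoke \hyperref[singleoperator]{Proposition \ref*{singleoperator}} for that function, with your explicit factorization of $h(q_m^{-1} z_m^{-1})/h(z_m^{-1})$ supplying exactly the cross terms that the paper leaves implicit. Your closing discussion of the contour bookkeeping (auxiliary distinct radii plus a final deformation) is more careful than the paper's one-line assertion that the hypotheses hold for $g_1$ ``due to the conditions set on the $r_i$ and $s_i$,'' but it does not change the method.
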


\begin{proof}
We will induct on $m$. If $m = 1$, then \hyperref[multipleoperators]{Proposition \ref*{multipleoperators}} coincides with \hyperref[singleoperator]{Proposition \ref*{singleoperator}}, so let us suppose that $m > 1$. Due to the inductive hypothesis and linearity of the $\tilde{D}_{n; q_i}$, we obtain that 
\begin{flalign*}
\Big( \displaystyle\prod_{j = 1}^{m} q_j^n \tilde{D}_{n; q_j}^1 \Big) G(X) &= q_{m}^n \tilde{D}_{n; q_m}^1 \Big( \displaystyle\prod_{j = 1}^{m - 1}  q_j^n \tilde{D}_{n; q_j}^1 \Big) G (X) \\
&= \displaystyle\frac{1}{(2\pi i)^{m - 1}} \displaystyle\oint \cdots \displaystyle\oint \displaystyle\prod_{j = 1}^{m - 1} \displaystyle\frac{q_j}{z_j - q_j z_j} \displaystyle\prod_{1\le j < k \le m - 1} \displaystyle\frac{(q_k z_k - q_j z_j)(z_k - z_j)}{(z_k - q_j z_j)(q_k z_k - z_j)}  \\
&\quad \times q_m^n [\tilde{D}_{n; q_m}^1]_X \left( G(X) \displaystyle\prod_{j = 1}^{m - 1} \left( \displaystyle\prod_{k = 1}^n \displaystyle\frac{1 - q_j z_j x_k}{1 - z_j x_k} \right) \displaystyle\frac{g(q_j^{-1} z_j^{-1})}{g(z_j^{-1})} \right) \displaystyle\prod_{j = 1}^{m - 1} dz_j, 
\end{flalign*}

\noindent where the contour for $z_j$ is the union of the positively oriented circle $|z_j| = r_j$ and the negatively oriented circle $|z_j| = s_j$ for each integer $j \in [1, m - 1]$. Now set 
\begin{flalign*}
g_1 (x) = g(x) \displaystyle\prod_{j = 1}^{m - 1} \displaystyle\frac{1 - q_j z_j x}{1 - z_j x}.
\end{flalign*}

\noindent For any $z \in \mathbb{C}$ satisfying $|z| \in [r_m, s_m]$, observe that $g_1 (z^{-1}) \ne 0$ and that $q_m^{-1} z^{-1}$ is not a pole of $g_1$ due to the conditions set on the $r_i$ and $s_i$. Then, applying \hyperref[singleoperator]{Proposition \ref*{singleoperator}} to $g_1$ instead of $g$ yields (\hyperref[operators]{\ref*{operators}}). 
\end{proof}

\begin{rem}
\label{determinants}
Already we begin to see determinantal expressions from \hyperref[multipleoperators]{Proposition \ref*{multipleoperators}}. Indeed, the first and second product of the right side of (\hyperref[operators]{\ref*{operators}}) yield a determinant due to the {\itshape Cauchy determinant identity} 
\begin{flalign}
\label{determinant}
\det \left[ \displaystyle\frac{1}{a_i - b_j} \right]_{i, j = 1}^n = \displaystyle\prod_{k = 1}^n \displaystyle\frac{1}{a_k - b_k} \displaystyle\prod_{1 \le j < k \le n} \displaystyle\frac{(a_k - a_j)(b_k - b_j)}{(a_k - b_j)(b_k - a_j)}, 
\end{flalign}

\noindent which holds for all sets of variables $(a_1, a_2, \ldots , a_n)$ and $(b_1, b_2, \ldots , b_n)$. This determinant arises in (\hyperref[operators]{\ref*{operators}}) because we are applying Macdonald $(q, q)$-operators instead of arbitrary Macdonald $(q, t)$-operators. If we had applied the more general operators, then the second product on the right side of (\hyperref[operators]{\ref*{operators}}) would depend on both the $q$ and $t$ parameters, and we would not immediately obtain a determinant. 
\end{rem}

\noindent Now, define the function 
\begin{flalign}
\label{generatingfunctionmeasure}
C (X; Y; Q) = F(X; Y)^{-1} \displaystyle\sum_{\lambda \in \mathbb{Y}} s_{\lambda} (X) s_{\lambda} (Y) \displaystyle\prod_{i = 1}^m \displaystyle\sum_{j = 1}^n q_i^{j - \lambda_j} 
\end{flalign}

\noindent for any finite sets of real numbers $X$ and $Y$ and any set of complex numbers $Q = \{ q_1, q_2, \ldots , q_m \} \subset \mathbb{C}$, when the right side of (\hyperref[generatingfunctionmeasure]{\ref*{generatingfunctionmeasure}}) converges. The following lemma puts the left side of (\hyperref[operators]{\ref*{operators}}) in terms of $C (X; Y; Q)$ when $G(X) = F(X; Y)$. 

\begin{lem}
\label{operatorsf}
Let $X = (x_1, x_2, \ldots , x_n)$ and $Y = (y_1, y_2, \ldots , y_n)$ be sets of nonnegative numbers less than $1$, and let $Q = \{ q_1, q_2, \ldots , q_m \} \subset \mathbb{C}$ be a set of complex numbers such that $|q_j|^m \in (\max Y, 1)$ for each integer $j \in [1, m]$. Then, $C (X; Y; Q)$ converges and
\begin{flalign}
\label{operatorfunction}
\Big( \displaystyle\prod_{j = 1}^m q_j^n [\tilde{D}_{n; q_j}^1]_X \Big) F(X; Y) = F(X; Y) C (X; Y; Q). 
\end{flalign}
\end{lem}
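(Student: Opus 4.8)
The plan is to expand $F(X;Y)=\sum_{\lambda\in\mathbb{Y}}s_\lambda(X)s_\lambda(Y)$ via the Cauchy identity (\ref{sum}) and then apply the difference operators term by term, using the eigenfunction property from \hyperref[eigenfunction]{Proposition \ref*{eigenfunction}}. Since each $s_\lambda(X)$ is an eigenfunction of $\tilde{D}_{n;q_j}^1$ with eigenvalue $e_1(q_j^{1-\lambda_1-n},\ldots,q_j^{-\lambda_n})=\sum_{k=1}^n q_j^{k-\lambda_k-n}$, multiplying by the prefactor $q_j^n$ cancels the $q_j^{-n}$ and yields the clean eigenvalue $\sum_{k=1}^n q_j^{k-\lambda_k}$. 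Applying all $m$ operators and using that they act only on the $X$-variables (so the $s_\lambda(Y)$ factors are untouched) gives
\begin{flalign*}
\Big(\prod_{j=1}^m q_j^n[\tilde{D}_{n;q_j}^1]_X\Big)F(X;Y)=\sum_{\lambda\in\mathbb{Y}}s_\lambda(X)s_\lambda(Y)\prod_{j=1}^m\sum_{k=1}^n q_j^{k-\lambda_k},
\end{flalign*}
which is exactly $F(X;Y)\,C(X;Y;Q)$ after recognizing the right side of (\ref{generatingfunctionmeasure}). This establishes (\ref{operatorfunction}) at the level of formal manipulation, so the substance of the lemma is really the convergence claim and the justification for the termwise operations.

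First I would verify that $C(X;Y;Q)$ converges under the hypothesis $|q_j|^m\in(\max Y,1)$. Here the key is to bound the series $\sum_\lambda s_\lambda(X)s_\lambda(Y)\prod_{j}\sum_{k}q_j^{k-\lambda_k}$ in absolute value. Each inner sum $\sum_{k=1}^n q_j^{k-\lambda_k}$ is a finite sum, and since $\lambda_k\ge 0$ with $|q_j|<1$, the dominant contribution comes from the largest parts; a crude bound gives $\big|\sum_{k=1}^n q_j^{k-\lambda_k}\big|\le n\,|q_j|^{-\lambda_1}$ (or more carefully $\sum_k|q_j|^{k-\lambda_k}\le n|q_j|^{1-\lambda_1}$). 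Taking the product over $j$ produces a factor of order $\prod_j|q_j|^{-\lambda_1}\le(\min_j|q_j|)^{-m\lambda_1}$, and I would compare this against the decay of $s_\lambda(X)s_\lambda(Y)$. The Schur functions satisfy $s_\lambda(Y)\le (\text{const})\,(\max Y)^{|\lambda|}$ up to polynomial factors in the number of parts, so the $|\lambda|\ge\lambda_1$ worth of geometric decay from $s_\lambda(Y)$, combined with the constraint $|q_j|^m>\max Y$ (equivalently $(\min_j|q_j|)^{-m}<(\max Y)^{-1}$ when arranged correctly), should dominate the growth from the eigenvalue product. The condition $|q_j|^m>\max Y$ is precisely what makes the ratio of the $\lambda_1$-growth to the $\lambda_1$-decay strictly less than $1$, forcing absolute convergence.

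The main obstacle I expect is making the convergence bound fully rigorous while controlling \emph{all} parts $\lambda_2,\lambda_3,\ldots$, not just $\lambda_1$: the eigenvalue factor involves every $q_j^{-\lambda_k}$, and a naive bound by the largest part discards structure that one may need to keep the sum summable over the full partition lattice. The careful argument is to write $\sum_{k=1}^n q_j^{k-\lambda_k}$ and observe that each term $|q_j|^{k-\lambda_k}$ is controlled individually, then use the combinatorial estimate $s_\lambda(Y)\le s_\lambda(\max Y,\ldots,\max Y)$ together with the bound $s_\lambda(t,\ldots,t)=t^{|\lambda|}\binom{\,}{\,}$-type polynomial growth to absorb the geometric factors $|q_j|^{-\lambda_k}$ summed against $(\max Y)^{\lambda_k}$, part by part. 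Once absolute convergence is secured, the termwise application of the finitely many linear operators $\tilde{D}_{n;q_j}^1$ is justified by linearity together with the fact that absolute convergence permits interchanging the operators with the infinite sum, completing the proof.
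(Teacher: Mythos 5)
Your proposal is correct and follows essentially the same route as the paper: expand $F(X;Y)$ by the Cauchy identity, apply the operators termwise using the eigenfunction property of \hyperref[eigenfunction]{Proposition \ref*{eigenfunction}} (with $q_j^n$ cancelling the $q_j^{-n}$ in the eigenvalue), and establish absolute convergence of $C(X;Y;Q)$ from the combinatorial bounds on Schur polynomials combined with the geometric decay forced by $\max Y < \min_j |q_j|^m$ and the partition count $|\mathbb{Y}_j| \le c^{\sqrt{j}}$. The only cosmetic difference is that you bound the eigenvalue factors via the largest part, $|q_j|^{-\lambda_k} \le |q_j|^{-\lambda_1}$, where the paper uses the cruder bound $|q_j|^{-\lambda_k} \le |q_j|^{-|\lambda|}$; both produce the same decisive factor $\bigl(\max Y / \min_k |q_k|^m\bigr)$ raised to a power growing with $\lambda$, so the estimates close in the same way.
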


\begin{proof}
First, let us show that $C (X; Y; Q)$ converges. Recall (from a combinatorial interpretation of the Schur polynomials) that 
\begin{flalign}
\label{inequality}
0 \le s_{\lambda} (X) \le \big( |\lambda| + 1 \big)^{\ell (\lambda)^2}; \qquad 0 \le s_{\lambda} (Y) \le \big( |\lambda| + 1 \big)^{\ell (\lambda)^2} (\max Y)^{|\lambda|} 
\end{flalign}

\noindent for all $\lambda \in \mathbb{Y}$. This yields 
\begin{flalign*}
|F(X; Y) C (X; Y; Q)| &< \displaystyle\sum_{\lambda \in \mathbb{Y}} s_{\lambda} (X) s_{\lambda} (Y) \displaystyle\prod_{i = 1}^m \displaystyle\sum_{j = 1}^{\infty} |q_i|^{j - \lambda_j} \\ 
& \le \displaystyle\sum_{\ell (\lambda) \le n} \big( |\lambda| + 1 \big)^{2 n^2} \left( \displaystyle\frac{\max Y}{\min_{1\le k \le m} |q_k|^m} \right)^{|\lambda|} \displaystyle\prod_{i = 1}^m \displaystyle\sum_{j = 1}^{\infty} |q_i|^j \\
& \le \displaystyle\prod_{i = 1}^m |q_i| (1 - |q_i|)^{-1} \displaystyle\sum_{j = 0}^{\infty} |\mathbb{Y}_j| (j + 1)^{2 n^2} \left( \displaystyle\frac{\max Y}{\min_{1\le k \le n} |q_k|^m} \right)^j, 
\end{flalign*} 

\noindent which is finite because $\max Y < \min_{1 \le k \le n} |q_k|^m$ and because there exists a constant $c$ such that $|\mathbb{Y}_j| \le c^{\sqrt{j}}$ for all nonnegative integers $j$. This verifies the convergence of $C (X; Y; Q)$. 

Now, from the Cauchy identity (\hyperref[sum]{\ref*{sum}}), linearity of the $\tilde{D}_{n; q_j}^1$, and \hyperref[eigenfunction]{Proposition \ref*{eigenfunction}}, we obtain that 
\begin{flalign*}
\Big( \displaystyle\prod_{j = 1}^m q_j^n [\tilde{D}_{n; q_j}^1]_X \Big) F(X; Y) &= \displaystyle\prod_{j=1}^m q_j^n [\tilde{D}_{n; q_j}^1]_X \displaystyle\sum_{\lambda \in \mathbb{Y}} s_{\lambda} (X) s_{\lambda} (Y) \nonumber \\
&= \displaystyle\sum_{\lambda \in \mathbb{Y}} s_{\lambda} (X) s_{\lambda} (Y)\displaystyle\prod_{i = 1}^m \displaystyle\sum_{j = 1}^n q_i^{j - \lambda_j}.  
\end{flalign*}

\noindent This establishes (\hyperref[operatorfunction]{\ref*{operatorfunction}}). 
\end{proof} 

We will now apply \hyperref[multipleoperators]{Proposition \ref*{multipleoperators}} and \hyperref[operatorsf]{Lemma \ref*{operatorsf}} to obtain a contour integral form for $C (X; Y; Q)$. The following proposition is similar to Proposition 3.8 in \cite{6}, except here we apply the Macdonald $(q, t)$-operators with varying values of $q = t$.

\begin{prop}
\label{operatorsmeasure}
Let $X = (x_1, x_2, \ldots , x_n)$ and $Y = (y_1, y_2, \ldots , y_n)$ be sets of positive real numbers less than $1$, and let $q_1, q_2, \ldots , q_m \in \mathbb{C}$ be complex numbers such that $\max Y < |q_k|^m < 1$ for each integer $k \in [1, m]$. Let $R > r \ge 1$ be real numbers such that $R^{-1} < x_1, x_2, \ldots , x_n < r^{-1}$. Then 
\begin{flalign}
\label{operatormeasure} 
C (X; Y; Q) = \displaystyle\frac{1}{(2\pi i)^m} \displaystyle\oint \cdots & \displaystyle\oint \displaystyle\prod_{j = 1}^m \displaystyle\frac{q_j}{z_j - q_j z_j} \displaystyle\prod_{1\le j < k\le m} \displaystyle\frac{(q_k z_k - q_j z_j)(z_k - z_j)}{(z_k - q_j z_j)(q_k z_k - z_j)} \nonumber \\
& \times \displaystyle\prod_{j = 1}^m H_{q_j} (X; \{ z_j \}) H_{q_j} \left( Y; \{ q_j^{-1} z_j^{-1} \} \right) dz_j, 
\end{flalign}

\noindent where the contour for each $z_j$ is the union of the positively oriented circle $|z_j| = r$ and the negatively oriented circle $|z_j| = R$. 
\end{prop}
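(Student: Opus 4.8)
The plan is to apply \hyperref[multipleoperators]{Proposition \ref*{multipleoperators}} with $G(X) = F(X; Y)$ and then to rewrite the left side of the resulting identity through \hyperref[operatorsf]{Lemma \ref*{operatorsf}}. First I would check that $F(X; Y)$ has the required product form: setting $g(u) = \prod_{k = 1}^n (1 - u y_k)^{-1} = F(\{ u \}; Y)$, we have $F(X; Y) = \prod_{i = 1}^n g(x_i)$, and $g$ is a rational function of one variable. Since $g$ has no zeros, the requirement $g(z^{-1}) \neq 0$ holds automatically; the only poles of $g$ are at $u = y_k^{-1}$, so $q_j^{-1} z^{-1}$ is a pole of $g$ precisely when $z = q_j^{-1} y_k$ for some $k$. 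This is where the hypothesis $\max Y < |q_j|^m < 1$ is used: it gives $|q_j^{-1} y_k| < |q_j|^{m - 1} \leq 1$, so every such point lies strictly inside the unit disk, hence strictly inside any circle of radius at least $1$.

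Next I would evaluate the two products in the integrand of \eqref{operators}. From $H_q(X; \{ z \}) = F(X; \{ z \}) / F(qX; \{ z \}) = \prod_{k = 1}^n (1 - q z x_k)/(1 - z x_k)$, the first product is exactly $H_{q_j}(X; \{ z_j \})$. For the second, evaluating $g$ at $z_j^{-1}$ and at $q_j^{-1} z_j^{-1}$ and using $F(q_j Y; \{ q_j^{-1} z_j^{-1} \}) = F(\{ z_j^{-1} \}; Y) = g(z_j^{-1})$ shows that $g(q_j^{-1} z_j^{-1}) / g(z_j^{-1}) = H_{q_j}(Y; \{ q_j^{-1} z_j^{-1} \})$. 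With these identifications the right side of \eqref{operators} becomes $F(X; Y)$ times the right side of \eqref{operatormeasure}. By \hyperref[operatorsf]{Lemma \ref*{operatorsf}} the left side of \eqref{operators} equals $F(X; Y) C(X; Y; Q)$, so cancelling the common factor $F(X; Y)$ yields \eqref{operatormeasure}.

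The step I expect to be the main obstacle is reconciling the contours. \hyperref[multipleoperators]{Proposition \ref*{multipleoperators}} applies only for radii obeying $\max_i |q_i| s_i < \min_i r_i \leq \max_i r_i < \min_i s_i$, which in turn restricts the $x_k$ to the interval $(\max_i s_i^{-1}, \min_i r_i^{-1})$; the present statement instead permits any $R > r \geq 1$ with $R^{-1} < x_k < r^{-1}$, and these need not satisfy the former inequalities. I would therefore first apply \hyperref[multipleoperators]{Proposition \ref*{multipleoperators}} for an admissible choice of radii, establishing \eqref{operatormeasure} for the corresponding range of $X$, and then pass to the stated radii $r, R$ by deforming the contours together with analyticity of $C(X; Y; Q)$ in $X$. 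The modulus estimate above is what legitimizes the deformation: as the inner radius ranges over $[1, \min_k x_k^{-1})$ and the outer radius over $(\max_k x_k^{-1}, \infty)$, the poles $z_j = q_j^{-1} y_k$ stay inside the inner contour while the poles $z_j = x_k^{-1}$ stay trapped between the two contours, so the collection of enclosed residues is unchanged. The delicate part is confirming that the cross-terms $\prod_{j < k} \frac{(q_k z_k - q_j z_j)(z_k - z_j)}{(z_k - q_j z_j)(q_k z_k - z_j)}$ contribute no further enclosed poles during the deformation, which is exactly the behavior that the ordering condition on the $r_j, s_j$ in \hyperref[multipleoperators]{Proposition \ref*{multipleoperators}} is designed to control.
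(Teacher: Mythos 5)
Your overall strategy is the same as the paper's: apply Proposition~\ref{multipleoperators} with $G(X) = F(X;Y)$, $g(u) = F(\{u\};Y)$, rewrite the left side via Lemma~\ref{operatorsf}, identify the factors $g(q_j^{-1}z_j^{-1})/g(z_j^{-1})$ and $\prod_k (1-q_jz_jx_k)/(1-z_jx_k)$ as the $H_{q_j}$ factors, and then recover the stated contours by a deformation combined with analytic continuation in the $x_i$ (the paper likewise first takes the $x_i$ close to $r^{-1}$ so that admissible radii $R' \le R$ with $R'\max_k|q_k| < r < \min(X^{-1}) \le \max(X^{-1}) < R'$ exist, and finishes by uniqueness of analytic continuation). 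Up to that point your proposal is sound.

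However, there is a genuine gap at exactly the step you flag as delicate, and your proposed resolution of it is incorrect. You assert that during the deformation of the outer contour the cross-terms ``contribute no further enclosed poles,'' crediting this to the ordering condition on the $r_j, s_j$ in Proposition~\ref{multipleoperators}. That is false: enlarging the outer radius from $R'$ to $R$ does sweep cross-term poles into the integration region. For instance, the denominator factor $q_kz_k - z_j$ produces a pole at $z_k = q_k^{-1}z_j$, and taking $z_j$ near $x_t^{-1}$ gives $|z_k| = x_t^{-1}/|q_k| > \max(X^{-1})$, which can easily lie between $R'$ and $R$. The ordering condition on the $r_j,s_j$ only guarantees the hypotheses of Proposition~\ref{singleoperator} at each stage of the iterated application; it says nothing about what happens when the contours are subsequently moved. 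The actual content of the paper's proof is to show that these newly enclosed poles have \emph{zero residue}, not that they do not exist: one first perturbs to a generic situation (condition~(\ref{condition}), justified by continuity), which makes every enclosed pole simple; one then classifies indices as independent ($z_k \in X^{-1}$) or related ($z_k = q_k^{-1}z_j$ or $q_jz_j$), and checks that any pole with a non-independent index is killed either by the numerator factor $z_i - z_{i'}$ (when two independent indices collide) or by the zero of $H_{q_k}(X;\{z_k\})$ at $z_k = q_k^{-1}x_t^{-1}$, the alternative location $z_k = q_ix_t^{-1}$ having modulus less than $r$ and hence lying outside the annulus. Without some argument of this kind your deformation step does not go through, so the proof as proposed is incomplete.
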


\begin{proof}

This proposition would follow from \hyperref[multipleoperators]{Proposition \ref*{multipleoperators}}, applied when $G(X) = F(X; Y)$, except that the contours might not coincide. In fact, for arbitrary sets of positive numbers $X$ and $Y$, contour radii $r_i$ and $s_i$ satisfying the conditions of \hyperref[multipleoperators]{Proposition \ref*{multipleoperators}} might not exist. Therefore, let us first assume that $x_1, x_2, \ldots , x_n \in (R^{-1}, r^{-1})$ are sufficiently close to $r^{-1}$ such that there exists a positive real number $R' \le R$ satisfying $R' \max_{1\le k \le m} |q_k| < r < \min (X^{-1}) \le \max (X^{-1}) < R'$. 

Applying \hyperref[multipleoperators]{Proposition \ref*{multipleoperators}} (with $G(X) = F(X; Y)$) to the left side of (\hyperref[operatorfunction]{\ref*{operatorfunction}}) yields (\hyperref[operatormeasure]{\ref*{operatormeasure}}), where the contour for each $z_k$ is now the union of the positively oriented circle $|z_k| = r$ and the negatively oriented circle $|z_k| = R'$. Therefore, it will suffice to show that we can deform each negatively oriented outer contour of radius $R'$ to a negatively oriented circle of radius $R$ without changing the value of the integral. 

To do so, we will use a method similar to the one applied in Proposition 3.8 of \cite{6}. Recall that the poles of the integrand contained in the original contours (of radii $r$ and $R'$) were all of the form $(x_{i_1}^{-1}, x_{i_2}^{-1}, \ldots , x_{i_m}^{-1})$ for some integers $i_1, i_2, \ldots , i_m \in [1, n]$. In order to show that the value of the integral does not change under the contour deformation, we will show that poles not of this type that are contained in the new contours have zero residue. To facilitate this, we will assume that the $x_i$ are pairwise distinct and that
\begin{flalign}
\label{condition}
x_i \ne x_{i'} \prod_{k = 1}^m q_k^{j_k}
\end{flalign}

\noindent for all integers $i, i' \in [1, m]$ and integers $j_1, j_2, \ldots , j_m \in [-2, 2]$ not all equal to $0$. Indeed, we may make this assumption due to the continuity of equality (\hyperref[operatormeasure]{\ref*{operatormeasure}}) in the variables $x_1, x_2, \ldots , x_n$ and $q_1, q_2, \ldots , q_m$. 

Now let us find all poles, and associated residues, of the integrand that are contained in the new contours. First integrate over $z_1$. Poles arise when either $z_1 \in X^{-1}$; $z_1 = q_1^{-1} z_j$ for some integer $j \ne 1$; or $z_1 = q_j z_j$ for some integer $j \ne 1$. In general, integrating over $z_k$ for any integer $k \in [1, m]$ yields poles when $z_k \in X^{-1}$; $z_k = q_k^{-1} z_j$ for some $j \ne k$; or $z_k = q_j z_j$ for some integer $j \ne k$. In the first case, we call the index $k$ {\itshape independent}; in the later two cases, we say that $k$ is {\itshape related} to $j$. For each index $j$, there is a sequence $j = j_0, j_1, \ldots , j_i$ of pairwise distinct indices such that $j_k$ is related to $j_{k + 1}$ for each integer $k \in [0, i - 1]$ and such that $j_i$ is independent; we call $j$ {\itshape dependent} on $j_i$. 

For instance, suppose that $m = 3$. The triple $(z_1, z_2, z_3) = (q_1^{-1} x_1^{-1}, x_1^{-1}, q_3^{-1} q_1^{-1} x_1^{-1})$ is one pole of the integrand. In this case, $3$ is related to $1$, which is related to $2$, which is independent. Observe that $3$ is not independent due to the assumption (\hyperref[condition]{\ref*{condition}}). Therefore, this pole is a simple pole, which implies that it has zero residue due to the factor of $H_{q_1} (X; \{ z_1 \})$ appearing in the integrand. Another example of a pole is $(z_1, z_2, z_3) = (q_1^{-1} x_1^{-1}, x_1^{-1}, x_1^{-1})$; here, $1$ is related to both $2$ and $3$, which are independent. The residue of this pole is also $0$ due to the factor of $z_2 - z_3$ in the integrand. 

In general, we will show that a pole of the integrand $(z_1, z_2, \ldots , z_m)$ has zero residue unless each integer $i \in [1, m]$ is independent. First observe that, if there are distinct independent indices $i, i' \in [1, m]$ such that $z_i = z_{i'}$, then the pole has zero residue due to the factor of $z_i - z_{i'}$ in the integrand; this generalizes the fact that the second example above $(z_1, z_2, z_3) = (q_1^{-1}, x_1^{-1}, x_1^{-1})$ has zero residue. 

Now, consider a pole of the integrand $(z_1, z_2, \ldots , z_m)$ such that there are no two distinct independent indices $i$ and $i'$ such that $z_i = z_{i'}$. Then for each integer $j \in [1, m]$, there is only one index $j_i \in [1, m]$ such that $j$ is dependent on $j_i$, due to assumption (\hyperref[condition]{\ref*{condition}}); this implies that $(z_1, z_2, \ldots , z_m)$ is a simple pole. Now, suppose that there is some integer $k \in [1, m]$ such that $z_k$ is not independent; we will show that $(z_1, z_2, \ldots , z_m)$ has zero residue. Without loss of generality, suppose that $k$ is not independent but is related to some independent index $i$; let $z_i = x_t^{-1}$ for some integer $t \in [1, n]$. Then, $z_k$ is either equal to $q_k^{-1} x_t^{-1}$ or $q_i x_t^{-1}$. Since $x_t^{-1} < R'$, the pole $q_i x_t^{-1}$ has magnitude less than $R' |q_i| < r$ and is thus not in the contour of integration; therefore, $z_k = q_k^{-1} x_t^{-1}$. Hence, the simple pole $(z_1, z_2, \ldots , z_m)$ has zero residue due to the factor of $H_{q_k} (X; \{ z_k \})$ in the integrand. 

This yields that all poles of the integrand contained in the new contours that have nonzero residue are of the form $(x_{i_1}^{-1}, x_{i_2}^{-1}, \ldots , x_{i_m}^{-1})$ for some $i_1, i_2, \ldots , i_m \in [1, m]$. These are the same as the poles contained in the original contour, so the value of the integral does not change under the contour deformation; this establishes the proposition when the $x_i$ are sufficiently close to $r^{-1}$. 

Now, observe that both sides of the equality (\hyperref[operatormeasure]{\ref*{operatormeasure}}) are analytic functions in complex variables $x_1, x_2, \ldots , x_n$ in the connected region where $|x_i| \in (r^{-1}, R^{-1})$ for each integer $i \in [1, n]$. By the above, (\hyperref[operatormeasure]{\ref*{operatormeasure}}) holds when the $x_i$ are sufficiently close to $r^{-1}$. Thus, (\hyperref[operatormeasure]{\ref*{operatormeasure}}) holds in general by uniqueness of analytic continuation. 
\end{proof}

\subsection{Proof of Theorem 1.1.1} 

In this section, we will establish \hyperref[measure]{Theorem \ref*{measure}} through the following weaker result. 

\begin{thm}
\label{measure2}

Suppose that $T = \{ t_1, t_2, \ldots , t_d \} \subset \mathbb{Z}$ is a set of pairwise distinct integers and that $n$ is an integer greater than $\max \{ d, d - \min T \}$. Let $X = (x_1, x_2, \ldots , x_n)$ and $Y = (y_1, y_2, \ldots , y_n)$ be any sets of $n$ positive real numbers less than $1$. Then, $\rho_{\textbf{SM}} (T) = \det \textbf{L}_T$, where $\textbf{L}_T$ is the $d \times d$ matrix defined in \hyperref[measure]{Theorem \ref*{measure}}. 
\end{thm}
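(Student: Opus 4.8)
The plan is to recognize $\rho_{\textbf{SM}}(T)$ as a single Laurent coefficient of the generating function $C(X; Y; Q)$ of \eqref{generatingfunctionmeasure} and then to evaluate that coefficient through the contour integral representation of $C$ furnished by Proposition \ref{operatorsmeasure}. First I would take $m = d$ and view $C(X; Y; \{q_1, \ldots, q_d\}) = \sum_{\lambda} \textbf{SM}(\lambda) \prod_{r = 1}^d \sum_{j = 1}^n q_r^{j - \lambda_j}$ as a function of the $d$ parameters $q_1, \ldots, q_d$. Expanding the product over $r$ and using that $j \mapsto \lambda_j - j$ is strictly decreasing, hence injective, the coefficient of $\prod_{r = 1}^d q_r^{-t_r}$ in $C$ is $\sum_{\lambda} \textbf{SM}(\lambda) \prod_{r = 1}^d \textbf{1}[t_r \in \{\lambda_j - j : 1 \le j \le n\}]$. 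The hypothesis $n > \max\{d, d - \min T\}$ is what guarantees that, on the support of $\textbf{SM}$ (where $\ell(\lambda) \le n$), the finite window $1 \le j \le n$ is wide enough to see every site of $T$, so that this event has the same probability as $T \subseteq \mathcal{X}(\lambda)$ and the coefficient is exactly $\rho_{\textbf{SM}}(T)$. I would realize the extraction as the $d$-fold integral $\frac{1}{(2\pi i)^d} \oint \cdots \oint C \prod_r q_r^{t_r - 1}\, dq_r$ over circles $|q_r| = \rho_r$ lying in the annulus of convergence supplied by Lemma \ref{operatorsf}.

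Next I would insert the formula for $C$ from Proposition \ref{operatorsmeasure}, obtaining a $2d$-fold integral in inner variables $z_1, \ldots, z_d$ and outer variables $q_1, \ldots, q_d$. After rewriting the Cauchy factors as $H_{q_j}(X; \{z_j\}) = F(X; \{z_j\}) / F(X; \{q_j z_j\})$ and $H_{q_j}(Y; \{q_j^{-1} z_j^{-1}\}) = F(Y; \{q_j^{-1} z_j^{-1}\}) / F(Y; \{z_j^{-1}\})$, I would carry out, for each $j$, the change of variables $w_j = q_j z_j$ with $z_j$ held fixed. A direct computation then collapses all of the single-index data into $\frac{1}{z_j - w_j}\, \frac{F(Y; \{w_j^{-1}\}) F(X; \{z_j\})}{F(Y; \{z_j^{-1}\}) F(X; \{w_j\})}\, w_j^{t_j} z_j^{- t_j - 1}$, which is precisely the integrand defining $L(t_j, t_j)$, while the interaction factors turn into $\prod_{1 \le j < k \le d} \frac{(w_k - w_j)(z_k - z_j)}{(z_k - w_j)(w_k - z_j)}$.

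At this stage the integrand is $\prod_{j} (z_j - w_j)^{-1}$ times these interaction factors, which by the Cauchy determinant identity \eqref{determinant} (taking $a_i = z_i$ and $b_j = w_j$) equals $\det[(z_i - w_j)^{-1}]_{i, j = 1}^d$. Expanding this as $\sum_{\sigma} \mathrm{sgn}(\sigma) \prod_i (z_i - w_{\sigma(i)})^{-1}$ and regrouping the surviving single-index factors so that $z_i$ is paired with $w_{\sigma(i)}$, the $2d$-fold integral factors, for each permutation $\sigma$, into a product over $i$ of double integrals in $(z_i, w_{\sigma(i)})$; by construction each such double integral is $L(t_i, t_{\sigma(i)})$. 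Summing over $\sigma$ yields $\sum_{\sigma} \mathrm{sgn}(\sigma) \prod_i L(t_i, t_{\sigma(i)}) = \det \textbf{L}_T$, completing the identification.

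The principal obstacle lies in the contour bookkeeping of the last two steps. After the substitution $w_j = q_j z_j$, the $z_j$- and $w_j$-contours inherited from Proposition \ref{operatorsmeasure} and from the $q_j$-extraction are nested circles, not the single configuration $|z| = r_1 > |w| = r_2$ used to define $L$; I would need to deform them to that configuration and verify that no poles of the integrand are crossed, so that the $2d$-fold integral genuinely separates into the independent double integrals giving the entries $L(t_i, t_{\sigma(i)})$. The spurious-pole analysis in the proof of Proposition \ref{operatorsmeasure} is the template for this argument. A secondary but indispensable point is the verification in the first step that the condition on $n$ really does force the extracted coefficient to equal the honest correlation $\rho_{\textbf{SM}}(T)$, rather than a version truncated to the first $n$ indices.
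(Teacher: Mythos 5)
Your route is the paper's route: your first step is Lemma~\ref{functionmeasure}, your second is Proposition~\ref{contourmeasure}, and your third is the paper's own proof of Theorem~\ref{measure2} (the paper expands the Cauchy determinant before substituting $w_j = q_j z_j$ rather than after, which is immaterial), and the algebra in your third step is correct. The genuine gap is in the second step and in your ``contour bookkeeping'' paragraph, where you misdescribe what Proposition~\ref{operatorsmeasure} actually hands you. Its $z_j$-contours are not single circles: each is the \emph{union} of a negatively oriented circle $|z_j| = r$ and a positively oriented circle $|z_j| = R$, with the poles $x_k^{-1}$ of $H_{q_j}(X; \{z_j\})$ lying in the annulus between them. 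This is unavoidable, since the operators $q_j^n \tilde{D}_{n; q_j}^1$ act precisely by collecting the residues at those poles. Consequently your proposed remedy --- deform to the configuration $|z| = r_1 > |w| = r_2$ defining $L$ and check that no poles are crossed --- cannot work as stated: passing from the union of the two circles to the inner circle alone is not a deformation, and it changes the value of the $2d$-fold integral (it deletes exactly the residues at the $x_k^{-1}$). The spurious-pole analysis of Proposition~\ref{operatorsmeasure}, which you cite as the template, concerns deformations between two outer contours that both enclose all of the $x_k^{-1}$, so it is not the right tool here.

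The missing idea is the argument of Proposition~\ref{contourmeasure}: split the union-contour integral into $2^d$ summands according to which circle each $z_j$ traverses, let $R \to \infty$, and observe that in any summand where some $z_k$ runs over the outer circle one has $H_{q_k}(X; \{z_k\}) \to q_k^n$ and $H_{q_k}(Y; \{q_k^{-1} z_k^{-1}\}) \to 1$, so that summand contributes only powers $q_k^j$ with $j \ge n$. Since the exponent of $q_k$ in $Q^{-T}$ is $-t_k < n$ --- this is a second, equally essential, use of the hypothesis $n > \max\{d, d - \min T\}$, which your write-up invokes only once --- such summands cannot affect the coefficient being extracted and may be discarded before the $q$-integration. (Discarding them also requires flipping the surviving inner circles from negative to positive orientation, which is where the prefactor bookkeeping leading to the $1/4\pi^2$ in each entry $L(t_i, t_{\sigma(i)})$ comes from; you never address orientations.) Only after this reduction are the $z_j$- and $w_j$-contours the nested circles you describe, and from there your substitution, determinant expansion, factorization into double integrals, and final deformation to $|z| = r_1 > |w| = r_2$ (which indeed crosses no poles) go through exactly as in the paper. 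Your closing concern about the window $\{1, \ldots, n\}$ versus $\mathcal{X}(\lambda)$ is handled in Lemma~\ref{functionmeasure} by the same distinctness-plus-$n$-large reasoning you sketch, so that point is not a gap relative to the paper.
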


\noindent Let us first show that \hyperref[measure2]{Theorem \ref*{measure2}} implies \hyperref[measure]{Theorem \ref*{measure}}. 
 
\begin{proof}[Proof of Theorem 1.1.1 Assuming Theorem 2.3.1]
\hyperref[measure]{Theorem \ref*{measure}} and \hyperref[measure2]{Theorem \ref*{measure2}} are different in three ways. The first difference is that \hyperref[measure2]{Theorem \ref*{measure2}} stipulates that $T$ consists of distinct integers, while $T$ is arbitrary in \hyperref[measure]{Theorem \ref*{measure}}. This can be resolved by observing that $\rho_{\textbf{SM}} (T) = 0 = \det \textbf{L}_T$ if $T$ contains two equal elements. Indeed, $\rho_{\textbf{SM}} (T) = 0$ in this case because $T$ contains two equal elements, while $\mathcal{X}(T)$ does not; furthermore, $\det \textbf{L}_T = 0$ because $\textbf{L}_T$ contains two equal columns. 

The second difference is that \hyperref[measure2]{Theorem \ref*{measure2}} assumes that the elements of $X$ and $Y$ are nonzero, while some of the elements of $X$ and $Y$ may be equal to zero in \hyperref[measure]{Theorem \ref*{measure}}. This can be resolved by observing that $\det \textbf{L}_T$ and $\rho_{\textbf{SM}} (T)$ are continuous functions in $X$ and $Y$.

The third difference is that \hyperref[measure2]{Theorem \ref*{measure2}} assumes that the sizes of the variable sets $X$ and $Y$ are equal and sufficiently large with respect to the set $T$, while the sizes of $X$ and $Y$ are arbitrary finite numbers in \hyperref[measure]{Theorem \ref*{measure}}. This can be resolved by setting some of the $x_i$ and $y_j$ to zero, thereby effectively reducing the sizes of $X$ and $Y$ to any positive integers at most equal to $n$. 
\end{proof}

\noindent We are now reduced to proving \hyperref[measure2]{Theorem \ref*{measure2}}. For any set of complex variables $Q = \{ q_1, q_2, \ldots , q_d \}$ and subset $U = \{ u_1, u_2, \ldots , u_d \} \subset \mathbb{Z}$, define the product $Q^{-U} = \prod_{i = 1}^d q_i^{-u_i}$. The following lemma shows how to extract the correlation function $\rho_{\textbf{SM}} (T)$ from $C (X; Y; Q)$. 

\begin{lem}
\label{functionmeasure}

Let $T$, $n$, $X$, and $Y$ be as in \hyperref[measure2]{Theorem \ref*{measure2}}. For any set of complex numbers $Q = \{ q_1, q_2, \ldots , q_d \} \subset \mathbb{C}$ satisfying $\max Y < |q_i|^d < 1$ for each integer $i \in [1, d]$, the correlation function $\rho_{\textbf{SM}} (T)$ is equal to the coefficient of $Q^{-T}$ in $C (X; Y; Q)$. Equivalently, 
\begin{flalign*}
\rho_{\textbf{SM}} (T) = \displaystyle\frac{1}{(2 \pi i)^d} \displaystyle\oint \cdots \displaystyle\oint C(X; Y; Q) \displaystyle\prod_{j = 1}^d q_j^{t_j - 1} d q_j,
\end{flalign*}

\noindent where the contour for each $q_j$ is the positively oriented circle $|q_j| = r_j$, where $r_1, r_2, \ldots , r_d$ are any positve real numbers greater than $(\max Y)^{1 / d}$ and less than $1$. 
\end{lem}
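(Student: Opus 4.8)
The plan is to unfold the definition \eqref{generatingfunctionmeasure} of $C(X;Y;Q)$, recognize each inner sum $\sum_{j=1}^n q_i^{j-\lambda_j}$ as a generating function over the point configuration attached to $\lambda$, and then read off the coefficient of $Q^{-T}$. Writing $m=d$ and substituting $\textbf{SM}(\lambda) = s_\lambda(X)s_\lambda(Y)/F(X;Y)$, equation \eqref{generatingfunctionmeasure} becomes
\begin{flalign*}
C(X;Y;Q) = \displaystyle\sum_{\lambda \in \mathbb{Y}} \textbf{SM}(\lambda) \displaystyle\prod_{i=1}^d \left( \displaystyle\sum_{j=1}^n q_i^{j-\lambda_j} \right),
\end{flalign*}
where only partitions with $\ell(\lambda)\le n$ contribute, since $s_\lambda(X)=0$ otherwise. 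For such $\lambda$, padding with zero parts, the map $j \mapsto j-\lambda_j$ is strictly increasing on $\{1,\dots,n\}$, so the exponents $\{j-\lambda_j : 1\le j\le n\}$ are $n$ distinct integers, namely the negatives of the elements of $\mathcal{X}_n(\lambda) = \{\lambda_j - j : 1\le j\le n\}$. Hence $\sum_{j=1}^n q_i^{j-\lambda_j} = \sum_{s\in\mathcal{X}_n(\lambda)} q_i^{-s}$.

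Next I would extract the coefficient of $Q^{-T}=\prod_{i=1}^d q_i^{-t_i}$. Since $q_1,\dots,q_d$ are independent variables, expanding the product and reading off this coefficient term by term in the absolutely convergent expansion guaranteed by Lemma~\ref{operatorsf} yields
\begin{flalign*}
\displaystyle\sum_{\lambda\in\mathbb{Y}} \textbf{SM}(\lambda) \displaystyle\prod_{i=1}^d \textbf{1}_{t_i\in\mathcal{X}_n(\lambda)} = \displaystyle\sum_{\lambda\in\mathbb{Y}} \textbf{SM}(\lambda)\, \textbf{1}_{T\subseteq\mathcal{X}_n(\lambda)},
\end{flalign*}
the last equality using that the $t_i$ are pairwise distinct. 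It then remains to replace $\mathcal{X}_n(\lambda)$ by the configuration defining $\rho_{\textbf{SM}}(T)$ in \eqref{correlationmeasure}. The set $\mathcal{X}_n(\lambda)$ enlarges $\mathcal{X}(\lambda)$ by the ``spurious'' points $\{-j : \ell(\lambda) < j \le n\} = \{-n,\dots,-\ell(\lambda)-1\}$ arising from the zero parts, all of which lie strictly below $-\ell(\lambda)$; equivalently $\mathcal{X}_n(\lambda)$ agrees with the full configuration $\{\lambda_j-j : j\ge 1\}$ on all integers $\ge -n$. This is exactly where the hypothesis $n > \max\{d, d-\min T\}$ enters: it forces $\min T > d - n$, hence $\min T > -n$, so $T$ lies above the level of the spurious points and $\textbf{1}_{T\subseteq\mathcal{X}_n(\lambda)}$ may be replaced inside the sum by the indicator relevant to $\rho_{\textbf{SM}}(T)$. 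I expect this reconciliation — matching the finite truncation $\mathcal{X}_n(\lambda)$ against the configuration used to define $\rho_{\textbf{SM}}$ and pinning down precisely why the stated lower bound on $n$ (and not merely $n\ge\ell(\lambda)$) is what is needed — to be the main obstacle, since every other step is purely formal.

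Finally I would convert the coefficient extraction into a contour integral. For a Laurent series $\sum_k c_k q^k$ convergent on $|q|=r$, one has $c_{-t} = \frac{1}{2\pi i}\oint_{|q|=r} \big(\sum_k c_k q^k\big)\, q^{t-1}\, dq$; applying this in each variable $q_j$ simultaneously isolates the coefficient of $q_j^{-t_j}$ and gives
\begin{flalign*}
\rho_{\textbf{SM}}(T) = \displaystyle\frac{1}{(2\pi i)^d} \displaystyle\oint \cdots \displaystyle\oint C(X;Y;Q) \displaystyle\prod_{j=1}^d q_j^{t_j-1}\, dq_j.
\end{flalign*}
Here I would take each contour to be the circle $|q_j|=r_j$ with $(\max Y)^{1/d} < r_j < 1$, which lies in the annulus $\max Y < |q_j|^d < 1$ on which $C(X;Y;Q)$ converges by Lemma~\ref{operatorsf}; the absolute convergence established in the proof of that lemma also legitimizes integrating the series for $C$ term by term and interchanging it with the contour integrals. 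This produces the claimed formula.
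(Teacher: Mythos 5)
Your proof takes the same route as the paper's: expand $C(X;Y;Q)$ via (\ref{generatingfunctionmeasure}), identify the coefficient of $Q^{-T}$ as an expectation of an indicator, and pass to the contour-integral form by the residue theorem together with the convergence supplied by Lemma \ref{operatorsf}. You also correctly singled out the one non-formal step, namely reconciling the truncated configuration $\mathcal{X}_n(\lambda) = \{\lambda_j - j : 1 \le j \le n\}$ with the set $\mathcal{X}(\lambda)$ of (\ref{correlationmeasure}); but your resolution of that step is a non sequitur. From $\min T > -n$ you conclude that ``$T$ lies above the level of the spurious points.'' The spurious points form the set $\{-n, \ldots, -\ell(\lambda)-1\}$, whose top element $-\ell(\lambda)-1$ depends on $\lambda$ and can be as large as $-1$ (take $\lambda = \emptyset$), so no condition on $T$ alone, short of $\min T \ge 0$, places $T$ above them for every $\lambda$. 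Concretely, take $T = \{-1\}$, $d = 1$, $n \ge 3$: the empty partition has $1 - \lambda_1 = 1$, so it contributes to the coefficient of $q^{1} = Q^{-T}$ in $C(X;Y;\{q\})$ even though $-1 \notin \mathcal{X}(\emptyset) = \emptyset$; that coefficient equals $\rho_{\textbf{SM}}(\{-1\}) + \textbf{SM}(\emptyset)$, not $\rho_{\textbf{SM}}(\{-1\})$. What $\min T > -n$ actually yields is what you wrote one sentence earlier: $\textbf{1}_{T \subseteq \mathcal{X}_n(\lambda)}$ agrees with $\textbf{1}_{T \subseteq \{\lambda_j - j \,:\, j \ge 1\}}$, the indicator of the \emph{infinite} configuration, which is not the indicator defining $\rho_{\textbf{SM}}$ in (\ref{correlationmeasure}).

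You should know that this flaw is inherited from the paper rather than introduced by you. The paper's own proof simply asserts that the coefficient of $Q^{-T}$ in $\prod_{i=1}^d \sum_{j=1}^n q_i^{j-\lambda_j}$ is $1$ if $T \subset \mathcal{X}(\lambda)$ and $0$ otherwise, which is false for exactly the same reason; and with the paper's finite-set definition of $\mathcal{X}(\lambda)$, the lemma itself fails for $T = \{-1\}$ as above. Both statement and proof become correct (and your argument becomes a complete and careful one) once $\mathcal{X}(\lambda)$ is taken to be the standard infinite configuration $\{\lambda_j - j : j \ge 1\}$: on the support of $\textbf{SM}$ one then has $\mathcal{X}_n(\lambda) = \mathcal{X}(\lambda) \cap [-n, \infty)$, and the hypothesis $n > d - \min T$ forces $\min T > -n$, so the two indicators coincide for every contributing $\lambda$ --- precisely the agreement-above-$-n$ observation you made. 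So the verdict is: same approach as the paper, with one genuinely wrong step occurring at exactly the point you flagged as the main obstacle; the correct repair is to change the definition of the point configuration (as the standard literature does), not to argue that the spurious points avoid $T$.
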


\begin{proof}
By \hyperref[operatorsf]{Lemma \ref*{operatorsf}}, $C(X; Y; Q)$ is a convergent power series and thus an analytic function in $Q$ when $\max Y < |q_i|^d < 1$ for each integer $i \in [1, d]$. Therefore, the equivalence of the two statements of the lemma follows from the residue theorem. 

Let us verify the first statement of the lemma. For any convergent power series $P_1$ and $P_2$ in $Q$, we say that $P_1 \simeq P_2$ if the coefficients of $Q^{-T}$ in $P_1$ and $P_2$ are equal. We have that 
\begin{flalign*}
\displaystyle\sum_{U \in \mathbb{Z}^d} \rho_{\textbf{SM}} (U) Q^{-U} &= \displaystyle\sum_{\lambda \in \mathbb{Y}} \textbf{SM}(\lambda) \displaystyle\sum_{U\in \mathbb{Z}^d}\textbf{1}_{U \subseteq \mathcal{X}(\lambda)} Q^{-U} \\
& \simeq F(X; Y)^{-1} \displaystyle\sum_{\lambda \in \mathbb{Y}} s_{\lambda} (X) s_{\lambda} (Y) \displaystyle\prod_{i=1}^d \displaystyle\sum_{j=1}^n q_i^{j - \lambda_j} \\
& = C (X; Y; Q). 
\end{flalign*}

\noindent The first equality above is due to (\hyperref[correlationmeasure]{\ref*{correlationmeasure}}). The second equality holds because the coefficient of $Q^{-T}$ in $\prod_{i = 1}^d \sum_{j = 1}^n q_i^{j - \lambda_j}$ is $1$ if $T \subset \mathcal{X} (\lambda)$ and is $0$ otherwise; this is due to the fact that $\mathcal{X}(\lambda)$ consists of distinct elements greater than $-n$ if $\ell (\lambda) \le n$, and due to our assumptions that $T$ consists of pairwise distinct elements and that $n > \max \{ d, d - \min T \}$. The third equality is due to the definition (\hyperref[generatingfunctionmeasure]{\ref*{generatingfunctionmeasure}}). 
\end{proof}

\noindent The following proposition uses \hyperref[operatorsmeasure]{Proposition \ref*{operatorsmeasure}} and \hyperref[functionmeasure]{Lemma \ref*{functionmeasure}} to obtain a contour integral form for $\rho_{\textbf{SM}} (T)$. 

\begin{prop}
\label{contourmeasure}

Let $T$, $n$, $X$, and $Y$ be as in \hyperref[measure2]{Theorem \ref*{measure2}}. Then
\begin{flalign*}
\rho_{\textbf{SM}} (T) = \displaystyle\frac{1}{(-4\pi^2)^d} \displaystyle\oint \cdots \displaystyle\oint \det \left[ \displaystyle\frac{1}{z_j - q_k z_k} \right]_{j, k = 1}^d \displaystyle\prod_{j = 1}^d H_{q_j} (X; \{ z_j \}) H_{q_j} (Y; \{ q_j^{-1} z_j^{-1} \} ) q_j^{t_j} d q_j d z_j, 
\end{flalign*}

\noindent where the contour for each $z_j$ is the positively oriented circle $|z_j| = 1$ and the contour for each $q_j$ is the positively oriented circle $|q_j| = r_j$, for any $r_1, r_2, \ldots , r_d \in ((\max Y)^{1 / d}, 1)$. 
\end{prop}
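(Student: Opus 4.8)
The plan is to assemble the desired formula from the two contour-integral representations already established and then collapse the $z$-contours. I would begin from \hyperref[functionmeasure]{Lemma \ref*{functionmeasure}}, which writes $\rho_{\textbf{SM}}(T) = \frac{1}{(2\pi i)^d} \oint \cdots \oint C(X; Y; Q) \prod_{j=1}^d q_j^{t_j - 1}\, dq_j$ over the circles $|q_j| = r_j$. Since $r_j \in ((\max Y)^{1/d}, 1)$ gives $|q_j|^d = r_j^d \in (\max Y, 1)$, the hypotheses of \hyperref[operatorsmeasure]{Proposition \ref*{operatorsmeasure}} (with $m = d$) are satisfied, so I would substitute the contour-integral expression for $C(X; Y; Q)$ found there. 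After fixing radii $R > r \ge 1$ with $R^{-1} < x_i < r^{-1}$, this yields a $2d$-fold integral whose $z_j$-variable runs over the union of the negatively oriented circle $|z_j| = r$ and the positively oriented circle $|z_j| = R$.

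Next I would apply the Cauchy determinant identity (\hyperref[determinant]{\ref*{determinant}}) with $a_j = z_j$ and $b_k = q_k z_k$, which converts the rational prefactor $\prod_{j} \frac{q_j}{z_j - q_j z_j} \prod_{j < k} \frac{(q_k z_k - q_j z_j)(z_k - z_j)}{(z_k - q_j z_j)(q_k z_k - z_j)}$ of \hyperref[operatorsmeasure]{Proposition \ref*{operatorsmeasure}} into exactly $\big( \prod_j q_j \big) \det \big[ \frac{1}{z_j - q_k z_k} \big]_{j, k = 1}^d$. The surplus factor $\prod_j q_j$ merges with the $\prod_j q_j^{t_j - 1}$ carried over from \hyperref[functionmeasure]{Lemma \ref*{functionmeasure}} to produce $\prod_j q_j^{t_j}$, so the integrand now matches the one in the statement and the overall constant is $\frac{1}{(2\pi i)^{2d}}$.

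It remains to replace the annular $z_j$-contours by the positively oriented unit circles $|z_j| = 1$, and this is the step I expect to carry the real content. Working one variable $z_j$ at a time, I would apply the residue theorem to the full integrand regarded as a meromorphic function of $z_j$. The annular contour encloses precisely the simple poles $z_j = x_k^{-1}$ lying in $r < |z_j| < R$, whereas the unit circle encloses the poles at $z_j = 0$ and $z_j = q_j^{-1} y_k$ inside $|z_j| = 1$; since the sum of all residues together with the residue at infinity vanishes, the annular integral equals the negative of the unit-circle integral up to correction terms. Applied across all $d$ variables, this sign reversal furnishes the factor $(-1)^d$ that converts the assembled prefactor $\frac{1}{(2\pi i)^{2d}}$ into the desired $\frac{1}{(2\pi)^{2d}}$.

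The main difficulty is showing that the correction terms vanish. Two kinds appear. First, because $H_{q_j}(X; \{z_j\}) \to q_j^n$ and the Cauchy determinant decays like $z_j^{-1}$ as $z_j \to \infty$, the integrand has a nonzero residue at $z_j = \infty$; taking it leaves a term proportional to $q_j^{n + t_j}$ times a factor holomorphic in $q_j$ on $|q_j| \le r_j$, so the ensuing $q_j$-contour integral over $|q_j| = r_j < 1$ vanishes precisely because $n + t_j > 0$, which is guaranteed by the hypothesis $n > \max \{ d, d - \min T \}$ of \hyperref[measure2]{Theorem \ref*{measure2}}. Second, the off-diagonal entries of the determinant create cross-poles $z_j = q_k z_k$ that may be swept during the deformation; I would show, by the same kind of pole analysis used in the proof of \hyperref[operatorsmeasure]{Proposition \ref*{operatorsmeasure}}, that these residues likewise integrate to zero over the remaining $q$-variables. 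For $d = 1$ the entire correction reduces to a multiple of $\oint q_1^{n + t_1} (1 - q_1)^{-1} \, dq_1$, which is zero since its integrand is holomorphic inside $|q_1| = r_1$; propagating this cancellation through the sequential deformation for general $d$, where the cross-pole bookkeeping is most delicate, is the crux of the argument.
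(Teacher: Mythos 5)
Your proposal is, at bottom, the same proof as the paper's: both start from \hyperref[functionmeasure]{Lemma \ref*{functionmeasure}}, substitute the contour representation of $C(X; Y; Q)$ from \hyperref[operatorsmeasure]{Proposition \ref*{operatorsmeasure}} with $m = d$, condense the prefactor via the Cauchy determinant identity (\hyperref[determinant]{\ref*{determinant}}) so that $\prod_j q_j$ merges with $\prod_j q_j^{t_j - 1}$, discard the outer-circle contribution using the factor $q_j^{n}$ together with $n > \max\{d, d - \min T\}$, and perform the same sign bookkeeping $(-1)^d (2\pi i)^{-2d} = (2\pi)^{-2d}$. The only organizational difference is that the paper invokes \hyperref[operatorsmeasure]{Proposition \ref*{operatorsmeasure}} with inner radius $r = 1$, so each $z_j$-contour is already the union of the negatively oriented unit circle and the positively oriented circle $|z_j| = R$; it then merely splits the integral into $2^d$ summands---no contour is ever deformed, hence no pole is ever crossed---and shows that every summand involving an outer circle has vanishing $Q^{-T}$-coefficient as $R \to \infty$. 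That limit statement is literally your residue-at-infinity computation, since the integrand decays like $1/z_j$ at infinity.

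Consequently, the step you flag as the crux (``cross-pole bookkeeping'' in the sequential deformation) is a self-created difficulty, and it dissolves once the argument is set up this way. Take $r = 1$ from the outset (allowed, since the only constraint is $x_i < r^{-1}$), so there is no separate deformation from $|z_j| = r$ to $|z_j| = 1$. Then for each $j$, with the other variables fixed anywhere on their contours, one has the exact identity: (annulus-boundary integral in $z_j$) $= -$ (positively oriented unit-circle integral) $- \, 2\pi i \, \mathrm{Res}_{z_j = \infty}$. This uses no pole inventory at all, only the fact that every finite pole has modulus $< R$ (true: $|q_k z_k| \le |q_k| R < R$ and $x_i^{-1} < R$); whichever cross poles lie inside the unit disk or inside the annulus are automatically absorbed into the corresponding integrals. (Your stated inventory is actually incorrect on this point---when some $z_k$ sits on the outer circle, the cross pole $z_j = q_k z_k$ does lie in the annulus, so the annulus does not contain ``precisely'' the poles $x_i^{-1}$---but the claim is never needed.) Nor does the correction term require any propagation: the coefficient of $1/z_j$ in the integrand at $z_j = \infty$ is exactly $q_j^{n} (1 - q_j)^{-1} \det M_{jj}$ times factors independent of $q_j$, where $M_{jj}$ is the minor deleting row and column $j$ (only permutations fixing $j$ contribute at order $1/z_j$), so after multiplying by $q_j^{t_j}$ its $q_j$-dependence is $q_j^{n + t_j}/(1 - q_j)$, holomorphic in $|q_j| \le r_j < 1$ because $n + t_j > d \ge 1$. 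Hence each correction is annihilated by its own $q_j$-integral regardless of where the remaining $z_k$ sit---your $d = 1$ check generalizes verbatim. With that observation your argument closes and coincides with the paper's.
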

 
\begin{proof}
Applying \hyperref[operatorsmeasure]{Proposition \ref*{operatorsmeasure}} and the Cauchy determinant identity (\hyperref[determinant]{\ref*{determinant}}) when $\{ a_1, a_2, \ldots , a_d \} = \{ z_1, z_2, \ldots , z_d \}$ and $\{ b_1, b_2, \ldots , b_d \} = \{ q_1 z_1, q_2 z_2, \ldots , q_d z_d \}$ yields $C(X; Y; Q)$ is equal to 
\begin{flalign}
\label{contourgeneratingfunctionmeasure}
\displaystyle\frac{1}{(2 \pi i)^d} \displaystyle\oint \cdots \displaystyle\oint \det \left[ \displaystyle\frac{1}{z_j - q_k z_k} \right]_{j, k = 1}^d \displaystyle\prod_{j = 1}^d H_{q_j} (X; \{ z_j \}) H_{q_j} (Y; \{ q_j^{-1} z_j^{-1} \}) q_j dz_j ,  
\end{flalign}

\noindent where the contour for each $z_j$ is the union of the positively oriented circle $|z_j| = 1$ and the negatively oriented circle $|z_j| = R$ for any $R > \max (X^{-1})$. This integral is equal to the sum of $2^m$ integrals, in which each variable $z_j$ is either integrated along a circle of radius $1$ or of radius $R$. Let $R$ tend to $\infty$, and consider any summand in which some variable, say $z_k$, is integrated along a circle of radius $R$. The factor of $H_{q_k} (X; \{ z_k \})$ in (\hyperref[contourgeneratingfunctionmeasure]{\ref*{contourgeneratingfunctionmeasure}}) tends to $(q_k)^n$, and the factor of $H_{q_k} (Y; \{ q_k^{-1} z_k^{-1} \})$ tends to $1$. Since the exponent of $q_k$ in $Q^{-T}$ is less than $n$ (due to our assumption $n > \max \{ d, d - \min T \}$), the coefficient of $Q^{-T}$ in any such summand tends to $0$ as $R$ tends to $\infty$. Hence, $\rho_{\textbf{SM}} (T)$ is the coefficient of $Q^{-T}$ in (\hyperref[contourgeneratingfunctionmeasure]{\ref*{contourgeneratingfunctionmeasure}}) in which each $z_j$ is integrated along the positively oriented circle $|z_j| = 1$. 

Therefore, $\rho_{\textbf{SM}} (T)$ is equal to the residue of the pole $(q_1, q_2, \ldots , q_d) = (0, 0, \ldots , 0)$ of the integral 
\begin{flalign}
\label{coefficientgeneratingfunction}
\displaystyle\frac{1}{(2\pi i)^{d}} \displaystyle\oint \cdots \displaystyle\oint \det \left[ \displaystyle\frac{1}{z_j - q_k z_k} \right]_{j, k = 1}^d \displaystyle\prod_{j = 1}^d H_{q_j} (X; \{ z_j \}) H_{q_j} (Y; \{ q_j^{-1} z_j^{-1} \}) q_j^{t_j} d z_j, 
\end{flalign} 

\noindent where the contour for each $z_j$ is the positively oriented circle $|z_j| = 1$. Due to the residue theorem, $\rho_{\textbf{SM}} (T)$ is obtained from integrating each $q_j$ in (\hyperref[coefficientgeneratingfunction]{\ref*{coefficientgeneratingfunction}}) along a circle of radius $r_j$ for each integer $j \in [1, d]$. This yields the proposition. 
\end{proof}

\noindent We may now establish \hyperref[measure2]{Theorem \ref*{measure2}}. 

\begin{proof}[Proof of Theorem 2.3.1] 

Let $S_d$ be the symmetric group on $d$ elements. Applying \hyperref[contourmeasure]{Proposition \ref*{contourmeasure}} and expanding the determinant as a signed sum, we obtain 
\begin{flalign*}
\rho_{\textbf{SM}} (T) &= \displaystyle\frac{1}{(-4 \pi^2)^d} \displaystyle\oint \cdots \displaystyle\oint \displaystyle\sum_{\sigma \in S_d} \text{sgn} (\sigma) \displaystyle\prod_{j = 1}^d \displaystyle\frac{F(X; \{ z_j \}) F(Y; \{ q_j^{-1} z_j^{-1} \}) q_j^{t_j} d q_j d z_j}{\big( z_j - q_{\sigma (j)} z_{\sigma (j)} \big) F(X; \{ q_j z_j \}) F(Y; \{ z_j^{-1} \})}  \\
&= \displaystyle\frac{1}{(-4 \pi^2)^d} \displaystyle\sum_{\sigma \in S_d} \text{sgn} (\sigma) \displaystyle\oint \cdots \displaystyle\oint \displaystyle\prod_{j = 1}^d \displaystyle\frac{F(X; \{ z_j \}) F \big( Y; \{ q_{\sigma (j)}^{-1} z_{\sigma (j)}^{-1} \} \big) \big( q_{\sigma (j)} z_{\sigma (j)} \big)^{t_{\sigma (j)}} d q_j d z_j}{\big( z_j - q_{\sigma (j)} z_{\sigma (j)} \big) F \big( X; \{ q_{\sigma (j)} z_{\sigma (j)} \} \big) F(Y; \{ z_j^{-1} \}) z_j^{t_j}}, 
\end{flalign*} 

\noindent where each $z_j$ is integrated along the positively oriented circle $|z_j| = 1$ and each $q_j$ is oriented along the positively oriented circle $|q_j| = r$, for any real number $r \in ((\min Y)^{1 / d}, 1)$. Substituting $w_k = q_k z_k$ for each integer $k \in [1, d]$ yields 
\begin{flalign}
\label{determinantmeasure}
\rho_{\textbf{SM}} (T) &= \displaystyle\frac{1}{(-4 \pi^2)^d} \displaystyle\sum_{\sigma \in S_d} \text{sgn} (\sigma) \displaystyle\oint \cdots \displaystyle\oint \displaystyle\prod_{j = 1}^d \displaystyle\frac{F(X; \{ z_j \}) F \big( Y; \{ w_{\sigma (j)}^{-1} \} \big) w_{\sigma (j)}^{t_{\sigma (j)}} d w_{\sigma (j)} d z_j}{ \big( z_j - w_{\sigma (j)} \big) F \big( X; \{ w_{\sigma (j)} \} \big) F(Y; \{ z_j^{-1} \}) z_j^{t_j + 1}} \nonumber \\
&= \displaystyle\sum_{\sigma \in S_d} \text{sgn} (\sigma) \displaystyle\prod_{j = 1}^d \displaystyle\frac{1}{4 \pi^2} \displaystyle\oint \displaystyle\oint \displaystyle\frac{F(X; \{ z_j \}) F \big(Y; \{ w_{\sigma (j)}^{-1} \} \big) w_{\sigma (j)}^{t_{\sigma (j)}} d w_{\sigma (j)} d z_j}{ \big(  w_{\sigma (j)} - z_j \big) F \big( X; \{ w_{\sigma (j)} \} \big) F(Y; \{ z_j^{-1} \}) z_j^{t_j + 1}}, 
\end{flalign}

\noindent where the contour for each $z_j$ is the positively oriented circle $|z_j| = 1$ and the contour for each $q_j$ is the positively oriented circle $|q_j| = r$. The right side of (\hyperref[determinantmeasure]{\ref*{determinantmeasure}}) is the signed sum expansion of the determinant of $\textbf{L}_T$, whose entries are given by (\hyperref[kernelmeasure]{\ref*{kernelmeasure}}) but whose contours are possibly different. Specifically, the contour for $z$ is now the positively oriented circle $|z| = 1$ and the contour for $w$ is now the positively oriented circle $|w| = r$. However, we may deform the outer contour to $|z| = r_1$ and the inner contour to $|w| = r_2$ for any $\max X, \max Y < r_2 < r_1 < \min (X^{-1}), \min (Y^{-1})$ without changing the value of the expression (\hyperref[determinantmeasure]{\ref*{determinantmeasure}}), because this contour deformation does not pass through any poles of the integrands. Hence, we deduce that $\rho_{\textbf{SM}} (T) = \det \textbf{L}_T$. 
\end{proof}

\section{Correlation Functions of the Schur Process}

The proofs of \hyperref[process]{Theorem \ref*{process}} and \hyperref[measure]{Theorem \ref*{measure}} will be different in three ways. First, in order to establish \hyperref[process]{Theorem \ref*{process}}, we must use a generalized version of the generating function $C(X; Y; Q)$; this will be discussed in Section 3.1. Second, we will use the results from Section 2.1 to put this generating function in terms of a nested scalar product; this will be done in Section 3.2. Third, we will use the results of Section 2.1 and Section 2.2 to evaluate this scalar product and derive the correlation functions of the Schur process; this will be done in Section 3.3. 

\subsection{A Generalized Generating Function} 

Similar to \hyperref[measure]{Theorem \ref*{measure}}, we will deduce \hyperref[process]{Theorem \ref*{process}} from a weaker result. 

\begin{thm}
\label{process2}

For each integer $i \in [1, m]$, let $T_i = \{ t_{i, 1}, t_{i, 2}, \ldots , t_{i, d_i} \} \subset \mathbb{Z}$ be a (possibly empty) finite set of pairwise distinct integers. Let $T = \bigcup_{i = 1}^m \bigcup_{j = 1}^{d_i} \{ (i, t_{i, j}) \} \subset \{ 1, 2, \ldots , m \} \times \mathbb{Z}$, and let $|T| = \sum_{i = 1}^m d_i = d$. Let $n$ be some integer greater than $\max \{ d, d - \max \bigcup_{i = 1}^m T_i \}$, and let $X^{(1)}, X^{(2)}, \ldots , X^{(m)}$ and $Y^{(1)}, Y^{(2)}, \ldots , Y^{(m)}$ each be sets of $n$ positive real numbers less than $1$. Then, $\rho_{\textbf{S}} (T) = \det \textbf{K}_T$, where $\textbf{K}_T$ is the $d \times d$ matrix defined in \hyperref[process]{Theorem \ref*{process}}. 
\end{thm}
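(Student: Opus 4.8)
The plan is to generalize the entire Section 2.3 argument from the Schur measure to the Schur process, following the three-stage strategy outlined at the start of Section 3. The essential new difficulty is that the weight function $\mathcal{W}_{X, Y}(\lambda, \mu)$ in (\ref{weight}) involves \emph{skew}-Schur polynomials, which are not eigenfunctions of the Macdonald difference operators. First I would introduce a generalized generating function $C\big(\{X^{(i)}\}; \{Y^{(i)}\}; Q\big)$, analogous to $C(X; Y; Q)$ in (\ref{generatingfunctionmeasure}), but whose differential operators act at a prescribed level $s$ of the process. Concretely, for a set $Q$ indexed so that each $q$ carries a level label, one applies products of operators $\prod q_j^n \tilde{D}_{n; q_j}^1$ to the appropriate variable set $X^{(s)}$; the key is that extracting the correlation function $\rho_{\textbf{S}}(T)$ will amount to reading off the coefficient of a suitable monomial $Q^{-T}$, exactly as in \hyperref[functionmeasure]{Lemma \ref*{functionmeasure}}, but now the level structure of $T$ dictates which operators act where.

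The central step, and the one I expect to be the main obstacle, is to evaluate this generating function despite the presence of skew-Schur polynomials. Here I would invoke \hyperref[skew]{Corollary \ref*{skew}}, writing each $s_{\lambda^{(i+1)}/\mu^{(i)}}(X^{(i+1)})$ and $s_{\lambda^{(i)}/\mu^{(i)}}(Y^{(i)})$ as a scalar product $\langle s_{\lambda}(X, Z), s_{\mu}(Z)\rangle_Z$ in auxiliary variables $Z$. Summing over the intermediate partitions $\mu^{(i)}$ using the orthonormality of Schur functions (\hyperref[skew]{the Proposition} preceding Corollary \ref*{skew}) should collapse the weighted sum over $(\lambda, \mu)$ into a single \emph{nested} scalar product of Cauchy-type products $F$ and the operator-modified factors $H_{q_j}$, matching the structure anticipated in the discussion after Corollary \ref*{skew}. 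The technically delicate point is that the difference operators must be applied at the correct level before the scalar products are resolved, and one must verify that their eigenvalues (\hyperref[eigenfunction]{Proposition \ref*{eigenfunction}}) interact correctly with the telescoping; keeping track of which variable set each operator acts on, and ensuring the resulting expression is a genuine scalar product to which \hyperref[scalarseries]{Lemma \ref*{scalarseries}} applies, will be the crux.

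Once the generating function is expressed as a nested scalar product of products of $F$ and $H_{q_j}$, I would evaluate it in closed contour-integral form. This proceeds by repeatedly applying \hyperref[scalarseries]{Lemma \ref*{scalarseries}} (to turn each scalar product into a convergent product $c(X)$) together with the contour-integral representation of the operator action coming from \hyperref[multipleoperators]{Proposition \ref*{multipleoperators}} and its assembled form \hyperref[operatorsmeasure]{Proposition \ref*{operatorsmeasure}}, generalized to the multi-level setting. The Cauchy determinant identity (\ref{determinant}) again converts the product over $1 \le j < k \le m$ into a determinant, so that extracting the coefficient of $Q^{-T}$ produces a determinant whose $(r, c)$ entry, after the substitution $w = q z$ and a contour deformation, will be precisely $K(a_r, b_r; a_c, b_c)$ as given in (\ref{kernelprocess}).

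Finally I would descend from \hyperref[process2]{Theorem \ref*{process2}} to the full \hyperref[process]{Theorem \ref*{process}} exactly as in the measure case: handle repeated elements of $T$ by noting both sides vanish when a column is duplicated; handle variable sets of unequal or small cardinality by setting some $x_i^{(k)}$ and $y_i^{(k)}$ to zero and using continuity; and reduce the distinctness and size hypotheses on $n$ and $T$ by the same continuity and analytic-continuation arguments used for \hyperref[measure2]{Theorem \ref*{measure2}}. The one genuinely new bookkeeping burden throughout, relative to Section 2, is tracking the dependence of the contour radii $r_1, r_2$ on whether $s < t$ or $s \ge t$ in (\ref{kernelprocess}); this distinction should emerge naturally from the level-ordering of the operators and the corresponding ordering of poles during the contour deformations, mirroring the role of the $R \to \infty$ deformation in the proof of \hyperref[contourmeasure]{Proposition \ref*{contourmeasure}}.
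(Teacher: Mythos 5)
Your proposal follows essentially the same route as the paper's proof: a level-indexed generating function whose $Q^{-T}$ coefficient equals $\rho_{\textbf{S}}(T)$ (the paper's Lemma \ref{functionprocess}), conversion of the skew-Schur weights into nested scalar products via Corollary \ref{skew} (Lemma \ref{weightscalar} and Proposition \ref{weightoperators}), evaluation of those nested scalar products by Lemma \ref{scalarseries} combined with the contour-integral form of the difference operators from Proposition \ref{operatorsmeasure} (Lemma \ref{fhscalar} and Proposition \ref{contoursprocess}), and finally the Cauchy determinant identity, the substitution $w = qz$, and contour deformation. The only device you do not name explicitly is the truncation parameter $u$ in the paper's definition of $C(X;Y;Q;u)$ --- the indicators $\textbf{1}_{|\mu^{(i)}| \le u}$ and the limit $u \to \infty$ of Corollary \ref{limit} --- but this is precisely the bookkeeping needed to make Corollary \ref{skew} and Lemma \ref{scalarseries} applicable, i.e.\ the ``crux'' your second paragraph already identifies.
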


The proof of \hyperref[process]{Theorem \ref*{process}} assuming \hyperref[process2]{Theorem \ref*{process2}} is similar to the proof of \hyperref[measure]{Theorem \ref*{measure}} assuming \hyperref[measure2]{Theorem \ref*{measure2}}, so we omit it. For the remainder of this section, we will suppose that the sets $T_1, T_2, \ldots , T_m$, the integer $n$, and the sets of numbers $X^{(1)}, X^{(2)}, \ldots , X^{(m)}, Y^{(1)}, Y^{(2)}, \ldots , Y^{(m)}$ satisfy the conditions of \hyperref[process2]{Theorem \ref*{process2}}. 

Now, order to establish \hyperref[process2]{Theorem \ref*{process2}}, we will define a generating function that generalizes the function $C(X; Y; Q)$ from Section 2. For any sets $X^{(1)}, X^{(2)}, \ldots , X^{(m)}, Y^{(1)}, Y^{(2)}, \ldots , Y^{(m)}$ of positive numbers less than $1$; any set of complex numbers $Q = \{ q_{i, j} \} \subset \mathbb{C}$, where $i$ ranges from $1$ to $m$ and $j$ ranges from $1$ to $d_i$; and any (possibly infinite) integer $u \ge 0$, define the function 
\begin{flalign}
\label{generatingfunctionprocess}
C(X; Y; Q; u) = \displaystyle\sum_{(\lambda, \mu) \in \mathbb{Y}^m \times \mathbb{Y}^{m - 1}} \textbf{S} (\lambda, \mu) \displaystyle\prod_{j = 1}^{d_1} \displaystyle\sum_{k = 1}^n q_{1, j}^{k - \lambda_k^{(1)}} \displaystyle\prod_{i = 2}^m \textbf{1}_{|\mu^{(i - 1)}| \le u} \displaystyle\prod_{j = 1}^{d_i} \displaystyle\sum_{k = 1}^{n + u} q_{i, j}^{k - \lambda_k^{(i)}}, 
\end{flalign}

\noindent where $\lambda = (\lambda^{(1)}, \lambda^{(2)}, \ldots , \lambda^{(m)})$ and $\mu = (\mu^{(1)}, \mu^{(2)}, \ldots , \mu^{(m - 1)})$, when it converges. The following lemma gives a sufficient condition for convergence of $C (X; Y; Q; u)$. 

\begin{lem}
\label{generatingfunctioninequality}

If $\max \bigcup_{i = 1}^m Y^{(i)} < |q|^{dm^2} < 1$ for each $q \in Q$, then $C(X; Y; Q; u)$ converges absolutely for each (possibly infinite) nonnegative integer $u$. 
\end{lem}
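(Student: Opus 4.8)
The plan is to bound $|C(X; Y; Q; u)|$ termwise by a convergent series, adapting the estimate used in the proof of Lemma 2.2.12. First I would replace each inner sum $\sum_{k=1}^n q_{1,j}^{k - \lambda_k^{(1)}}$ and $\sum_{k=1}^{n+u} q_{i,j}^{k - \lambda_k^{(i)}}$ by the crude bound $\sum_{k=1}^\infty |q|^{k - \lambda_k^{(\bullet)}}$, where I take $q$ to be whatever element of $Q$ makes the bound largest (equivalently, I bound each factor using $\min_{q \in Q} |q|$ in the negative exponents and factor out a geometric tail). Since $\lambda_k^{(i)} \le |\lambda^{(i)}|$ for every $k$, each such inner sum is at most $|q|^{-|\lambda^{(i)}|}$ times a geometric series in $|q|$ that sums to $|q|(1 - |q|)^{-1}$; collecting all $d = \sum_i d_i$ of these factors produces a constant $\prod_{q \in Q} |q|(1-|q|)^{-1}$ times $\prod_{i=1}^m |q|^{-d_i |\lambda^{(i)}|}$.

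Next I would unfold the definition of $\textbf{S}(\lambda, \mu) = \mathcal{W}_{X,Y}(\lambda,\mu)/Z_{X,Y}$ and apply the Schur-polynomial inequalities in (\ref{inequality}) to each factor of the weight $\mathcal{W}_{X,Y}(\lambda, \mu)$ in (\ref{weight}). The key point is that each skew-Schur factor $s_{\lambda^{(i)}/\mu^{(i-1)}}(Y^{(i)})$ carries a factor of $(\max Y^{(i)})^{|\lambda^{(i)}| - |\mu^{(i-1)}|}$, so the product of all the $Y$-factors supplies a total power of $(\max \bigcup_i Y^{(i)})^{\sum_i (|\lambda^{(i)}| - |\mu^{(i-1)}|)}$, while the $X$-factors and the combinatorial prefactors $(|\lambda|+1)^{\ell(\lambda)^2}$ contribute only subexponential growth (bounded, as in Lemma 2.2.12, by $(|\lambda^{(i)}|+1)^{2n^2}$ for each level, since $\ell(\lambda^{(i)}) \le n$ or $\le n+u$ on the support after truncation). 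Here the indicator $\textbf{1}_{|\mu^{(i-1)}| \le u}$ is what keeps the lengths of the relevant partitions bounded when $u$ is finite; when $u = \infty$ the indicators are vacuous but the decay below still forces convergence.

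The heart of the argument is to check that the geometric decay from the $Y$-variables beats the inverse powers $\prod_i |q|^{-d_i |\lambda^{(i)}|}$ coming from the observable factors. The exponent of $\max \bigcup_i Y^{(i)}$ is $\sum_i |\lambda^{(i)}| - \sum_i |\mu^{(i-1)}| \ge \sum_i |\lambda^{(i)}| - \sum_i |\lambda^{(i)}|$ after using $\mu^{(i-1)} \subseteq \lambda^{(i)}$; more usefully, $|\mu^{(i-1)}| \le |\lambda^{(i)}|$ and also $|\mu^{(i-1)}| \le |\lambda^{(i-1)}|$, so one can arrange the bound so that each $|\lambda^{(i)}|$ appears with a net positive power of $\max \bigcup_i Y^{(i)}$ against at most $|q|^{-d_i |\lambda^{(i)}|}$. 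Because $d_i \le d$ and there are $m$ levels with at most $m$ nested comparisons, the worst aggregate inverse power of $|q|$ on a single $|\lambda^{(i)}|$ is bounded by $|q|^{-d m^2 |\lambda^{(i)}|}$; the hypothesis $\max \bigcup_i Y^{(i)} < |q|^{dm^2}$ is exactly what guarantees $\max \bigcup_i Y^{(i)} \cdot |q|^{-dm^2} < 1$, giving a convergent geometric factor in each $|\lambda^{(i)}|$. Finally, summing the residual polynomial prefactors against these geometric factors converges because $|\mathbb{Y}_j| \le c^{\sqrt{j}}$ for some constant $c$, exactly as in Lemma 2.2.12.

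The main obstacle will be the bookkeeping in the third paragraph: tracking precisely how many factors of $|q|^{-1}$ attach to each $|\lambda^{(i)}|$ once the $m$ levels and the $d_i$ observables per level are all combined, and confirming that the chain of inclusions $\mu^{(i-1)} \subseteq \lambda^{(i)}$, $\mu^{(i-1)} \subseteq \lambda^{(i-1)}$ really does let one absorb every $|\mu^{(i-1)}|$ into a sum of $|\lambda^{(i)}|$'s so that the net power of $\max \bigcup_i Y^{(i)}$ on each $|\lambda^{(i)}|$ stays positive. Once the exponent $dm^2$ is verified to dominate this combined power (it is deliberately generous rather than sharp), the convergence is immediate from the geometric-times-subexponential comparison.
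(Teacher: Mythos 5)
Your overall strategy coincides with the paper's: bound each observable factor by a geometric series after pulling out a factor $(\min_{q \in Q} |q|)^{-|\lambda^{(i)}|}$, bound the weight $\mathcal{W}_{X, Y} (\lambda, \mu)$ by a power of $\max \bigcup_i Y^{(i)}$ times subexponential prefactors via the skew-Schur inequalities (\ref{skewinequality}), and finish with the count $|\mathbb{Y}_j| \le c^{\sqrt{j}}$. However, the step you defer as ``the main obstacle'' is not bookkeeping; it is the one genuine idea in the proof, and your sketch of it does not yet close. The exponent of $\max \bigcup_i Y^{(i)}$ supplied by the weight is $|\lambda| - |\mu|$, and using the inclusions in a single direction (as in your first attempt, $|\mu^{(i)}| \le |\lambda^{(i)}|$ for all $i$) only yields $|\lambda| - |\mu| \ge |\lambda^{(m)}|$, which can be zero even when $|\lambda|$ is enormous; the symmetric choice yields $|\lambda| - |\mu| \ge |\lambda^{(1)}|$, equally useless. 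So there is no level-by-level scheme that assigns each $|\lambda^{(i)}|$ its own positive share of $\max \bigcup_i Y^{(i)}$ unless you mix the two directions, and your proposal never says how to do so.

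The paper's resolution is to split at the maximizing index: on the support of $\textbf{S}$ one has $\mu^{(i)} \subseteq \lambda^{(i)}$ and $\mu^{(i)} \subseteq \lambda^{(i+1)}$, so letting $h$ be an index at which $|\lambda^{(h)}|$ is maximal and applying $|\mu^{(i)}| \le |\lambda^{(i)}|$ for $i < h$ but $|\mu^{(i)}| \le |\lambda^{(i+1)}|$ for $i \ge h$, one gets $|\mu| \le |\lambda| - |\lambda^{(h)}|$, hence $|\lambda| - |\mu| \ge |\lambda^{(h)}| \ge |\lambda| / m$. With this aggregate inequality no per-level accounting is needed: since $\sum_i d_i |\lambda^{(i)}| \le d |\lambda|$, the product of the weight bound and the observable bound is at most a constant times $\big( \max \bigcup_i Y^{(i)} \cdot (\min_{q \in Q} |q|)^{-dm} \big)^{|\lambda| / m}$, and the hypothesis $\max \bigcup_i Y^{(i)} < \min_{q \in Q} |q|^{d m^2} \le \min_{q \in Q} |q|^{dm}$ makes this ratio strictly less than $1$ (the exponent $dm^2$ is indeed generous, as you guessed). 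The rest of your argument --- subexponential prefactors, the bound $|\mathbb{Y}_j| \le c^{\sqrt{j}}$ for both the $\lambda$'s and the compatible $\mu$'s, and summation over $j$ --- then goes through essentially as in (\ref{weightinequality}) and the display following it, for finite and infinite $u$ alike; note that the indicators $\textbf{1}_{|\mu^{(i-1)}| \le u}$ play no role in convergence (they are simply bounded by $1$), contrary to your remark about them controlling lengths.
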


\begin{proof}
The proof is similar to the proof of convergence in \hyperref[operatorsf]{Lemma \ref*{operatorsf}}. In analogue with (\hyperref[inequality]{\ref*{inequality}}), we have that 
\begin{flalign}
\label{skewinequality}
0 \le s_{\kappa / \nu} (X^{(i)}) \le \big( |\kappa| + 1 \big)^{\ell (\kappa)^2}; \quad 0 \le s_{\kappa / \nu} (Y^{(i)}) \le \big( |\kappa| + 1 \big)^{\ell (\kappa)^2} \big( \max Y^{(i)} \big)^{|\kappa| - |\nu|},  
\end{flalign}

\noindent for all partitions $\kappa, \nu \in \mathbb{Y}$ and all integers $i \in [1, m]$, due to a combinatorial interpretation of the skew-Schur polynomials. For any positive integer $i$ and any $\lambda = \{ \lambda^{(1)}, \lambda^{(2)}, \ldots , \lambda^{(i)} \} \in \mathbb{Y}^i$, define $|\lambda| = \sum_{j = 1}^i |\lambda^{(j)}|$. Applying (\hyperref[skewinequality]{\ref*{skewinequality}}), we obtain 
\begin{flalign}
\label{weightinequality}
\mathcal{W}_{X, Y} (\lambda, \mu) \le \big( \max \cup_{i = 1}^m Y^{(i)} \big)^{|\lambda| - |\mu|} \displaystyle\prod_{i = 1}^m \big( |\lambda^{(i)}| + 1 \big)^{2n^2}
\end{flalign}

\noindent for all $(\lambda, \mu) \in \mathbb{Y}^m \times \mathbb{Y}^{m - 1}$. 

Now, let $h \in [1, m]$ be the integer such that $|\lambda^{(h)}|$ is maximum. Recall that $\mathcal{W}_{X, Y} (\lambda, \mu)$ is $0$ unless $\mu^{(i)} \subseteq \lambda^{(i)}$ and $\mu^{(i)} \subseteq \lambda^{(i + 1)}$ for each integer $i \in [1, m - 1]$; these yield that $|\mu^{(i)}| \le |\lambda^{(i)}|$ and $|\mu^{(i)}| \le |\lambda^{(i + 1)}|$ for each integer $i \in [1, m - 1]$. Applying the first inequality for all integers $i \in [1, h - 1]$, applying the second inequality for all integers $i \in [h, m]$, and summing yields that either $\mathcal{W}_{X, Y} (\lambda, \mu) = 0$ or $|\lambda| - |\mu| \ge |\lambda^{(h)}| \ge |\lambda| / m$. 

Inserting this into (\hyperref[weightinequality]{\ref*{weightinequality}}) and recalling that $Z_{X, Y} = \prod_{1\le i\le j \le m} F \big( X^{(i)}; Y^{(j)} \big)$ yields 
\begin{flalign*}
Z_{X, Y} |C(X; Y; Q; u)| &\le \displaystyle\sum_{(\lambda, \mu)} \mathcal{W}_{X, Y} (\lambda, \mu) \displaystyle\prod_{i = 1}^m \displaystyle\sum_{j = 1}^{d_i} \displaystyle\sum_{k = 1}^{\infty} |q_{i, j}|^{k - \lambda_k^{(i)}} \\
&\le \displaystyle\sum_{(\lambda, \mu)} \big( \max \cup_{i = 1}^m Y^{(i)} \big)^{|\lambda| / m} \displaystyle\prod_{i = 1}^m \big( |\lambda^{(i)}| + 1 \big)^{2n^2} \displaystyle\prod_{j = 1}^{d_i} \displaystyle\sum_{k = 1}^{\infty} |q_{i, j}|^{k - \lambda_k^{(i)}} \\
&\le \displaystyle\sum_{(\lambda, \mu)} \left( \displaystyle\frac{\max \cup_{i = 1}^m Y^{(i)}}{\min_{q \in Q} |q|^{dm^2}} \right)^{|\lambda| / m} \displaystyle\prod_{i = 1}^m \big( |\lambda^{(i)}| + 1 \big)^{2n^2} \displaystyle\prod_{j = 1}^{d_i} \displaystyle\sum_{k = 1}^{\infty} |q_{i, j}|^k \\
&\le \displaystyle\sum_{(\lambda, \mu)} \left( \displaystyle\frac{\max \cup_{i = 1}^m Y^{(i)}}{\min_{q \in Q} |q|^{dm^2}} \right)^{|\lambda| / m} \left( \displaystyle\frac{|\lambda|}{m} + 1 \right)^{2mn^2} \displaystyle\prod_{i = 1}^m \displaystyle\prod_{j = 1}^{d_i} q_{i, j} (1 - q_{i, j})^{-1}, 
\end{flalign*}

\noindent where $\lambda = (\lambda^{(1)}, \lambda^{(2)}, \ldots , \lambda^{(m)}) \subset \mathbb{Y}^m$ and $\mu = (\mu^{(1)}, \mu^{(2)}, \ldots , \mu^{(m - 1)}) \in \mathbb{Y}^{m - 1}$ are summed over all sequences of partitions satisfying $\lambda^{(1)} \supseteq \mu^{(1)} \subseteq \lambda^{(2)} \supseteq \cdots \subseteq \lambda^{(m)}$. 

Now, there exists a constant $c > 0$ such that $\big| \bigcup_{k = 0}^j \mathbb{Y}_j \big| \le c^{\sqrt{j}}$ for each nonnegative integer $j$. Therefore, for any fixed $\lambda \in \mathbb{Y}^m$ such that $|\lambda| = j$, there are at most $c^{(m - 1) \sqrt{j}}$ sequences $\mu \in \mathbb{Y}^{m - 1}$ satisfying $\lambda^{(1)} \supseteq \mu^{(1)} \subseteq \lambda^{(2)} \supseteq \cdots \subseteq \lambda^{(m)}$. Furthermore, for each nonnegative integer $j$, there are at most $c^{m \sqrt{j}}$ sequences $\lambda \in \mathbb{Y}^m$ such that $|\lambda| = j$. Hence there is a constant $c'$ only dependent on $m$, $n$, $X^{(1)}, X^{(2)}, \ldots , X^{(m)}, Y^{(1)}, Y^{(2)}, \ldots , Y^{(m)}$, and $Q$ such that 
\begin{flalign*}
|C(X; Y; Q; u)| &\le c' \displaystyle\sum_{j = 0}^{\infty} \displaystyle\sum_{|\lambda| \le j} \displaystyle\sum_{|\mu| \le j} \left( \displaystyle\frac{\max \cup_{i = 1}^m Y^{(i)}}{\min_{q \in Q} |q|^{dm^2}} \right)^{j / m} \left( \displaystyle\frac{j}{m} + 1 \right)^{2mn^2} \\
&\le c' \displaystyle\sum_{j = 0}^{\infty} c^{(2m - 1) \sqrt{j}} \left( \displaystyle\frac{\max \cup_{i = 1}^m Y^{(i)}}{\min_{q \in Q} |q|^{dm^2}} \right)^{j / m} \left( \displaystyle\frac{j}{m} + 1 \right)^{2mn^2}, 
\end{flalign*}

\noindent which is finite because $\max \bigcup_{i = 1}^m Y^{(i)} < \min_{q \in Q} |q|^{dm^2}$. This establishes the lemma. 
\end{proof}

\noindent Due to \hyperref[generatingfunctioninequality]{Lemma \ref*{generatingfunctioninequality}}, we may express $C(X; Y; Q; \infty)$ as a limit of $C(X; Y; Q; u)$. 

\begin{cor}
\label{limit}

If $\max \bigcup_{i = 1}^m Y^{(i)} < |q|^{dm^2} < 1$ for each $q \in Q$, then we have that $C(X; Y; Q; \infty) = \lim_{u \rightarrow \infty} C(X; Y; Q; u)$. 
\end{cor}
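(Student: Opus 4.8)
The plan is to deduce the corollary from \hyperref[generatingfunctioninequality]{Lemma \ref*{generatingfunctioninequality}} by means of the dominated convergence theorem, regarding $C(X; Y; Q; u)$ as a sum over the index set $\mathbb{Y}^m \times \mathbb{Y}^{m - 1}$ equipped with counting measure. Write $f_u (\lambda, \mu)$ for the term indexed by $(\lambda, \mu)$ in (\hyperref[generatingfunctionprocess]{\ref*{generatingfunctionprocess}}), so that $C(X; Y; Q; u) = \sum_{(\lambda, \mu)} f_u (\lambda, \mu)$ for every (possibly infinite) $u \ge 0$; note that \hyperref[generatingfunctioninequality]{Lemma \ref*{generatingfunctioninequality}} already guarantees that each such sum, including $C(X; Y; Q; \infty)$, converges absolutely under the stated hypothesis. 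First I would verify pointwise convergence of the summands: for each \emph{fixed} pair $(\lambda, \mu)$, one has $f_u (\lambda, \mu) \to f_{\infty} (\lambda, \mu)$ as $u \to \infty$. Indeed, since each $\mu^{(i - 1)}$ is a fixed partition, the factor $\textbf{1}_{|\mu^{(i - 1)}| \le u}$ equals $1$ once $u \ge |\mu^{(i - 1)}|$; and since $|q_{i, j}| < 1$ and $\lambda_k^{(i)} = 0$ for all sufficiently large $k$, each truncated sum $\sum_{k = 1}^{n + u} q_{i, j}^{k - \lambda_k^{(i)}}$ converges to the full series $\sum_{k = 1}^{\infty} q_{i, j}^{k - \lambda_k^{(i)}}$. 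As $f_u (\lambda, \mu)$ is a fixed weight $\textbf{S}(\lambda, \mu)$ times finitely many such convergent factors, the claimed pointwise limit follows.

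Next I would supply a summable, $u$-independent majorant so that the limit may be passed through the sum. This is exactly what the proof of \hyperref[generatingfunctioninequality]{Lemma \ref*{generatingfunctioninequality}} furnishes: bounding $|f_u (\lambda, \mu)|$ by replacing each indicator $\textbf{1}_{|\mu^{(i - 1)}| \le u}$ by $1$ and each finite inner sum by the corresponding full series $\sum_{k = 1}^{\infty} |q_{i, j}|^{k - \lambda_k^{(i)}}$ produces an upper bound that does not depend on $u$ and dominates $|f_u (\lambda, \mu)|$ for every $u$. The proof of the lemma shows precisely that the sum of these majorants over $(\lambda, \mu)$ is finite under the hypothesis $\max \bigcup_{i = 1}^m Y^{(i)} < |q|^{dm^2} < 1$, so the hypotheses of the dominated convergence theorem are in place.

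Finally, applying dominated convergence lets me interchange the limit with the sum, giving
\begin{flalign*}
\lim_{u \to \infty} C(X; Y; Q; u) = \lim_{u \to \infty} \displaystyle\sum_{(\lambda, \mu)} f_u (\lambda, \mu) = \displaystyle\sum_{(\lambda, \mu)} f_{\infty} (\lambda, \mu) = C(X; Y; Q; \infty),
\end{flalign*}
where the middle equality uses the first two steps and the outer equalities are the definition of $C$. I do not expect any genuine obstacle here, as the corollary is a routine consequence of \hyperref[generatingfunctioninequality]{Lemma \ref*{generatingfunctioninequality}}. The one point requiring care—and the reason I would invoke dominated rather than monotone convergence—is that the $q_{i, j}$ are complex, so the summands $f_u (\lambda, \mu)$ are neither real nor monotone in $u$; this is exactly why the uniform \emph{absolute} bound from the lemma is the right tool.
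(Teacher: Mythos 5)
Your proposal is correct and is essentially the paper's argument: the paper's proof of this corollary is a one-line appeal to the definition (\hyperref[generatingfunctionprocess]{\ref*{generatingfunctionprocess}}) and \hyperref[generatingfunctioninequality]{Lemma \ref*{generatingfunctioninequality}}, whose implicit content is exactly the uniform-in-$u$ absolute bound you use as a dominating function. You have simply spelled out the routine details (pointwise convergence of the summands and the dominated convergence interchange) that the paper leaves to the reader.
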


\begin{proof}
This follows from the definition (\hyperref[generatingfunctionprocess]{\ref*{generatingfunctionprocess}}) and \hyperref[generatingfunctioninequality]{Lemma \ref*{generatingfunctioninequality}}. 
\end{proof}

\noindent For any set of complex variables $Q = \{ q_{i, j} \} \subset \mathbb{C}$, where $i$ ranges from $1$ to $m$ and $j$ ranges from $1$ to $d_i$, define the product $Q^{-T} = \prod_{i = 1}^m \prod_{j = 1}^{d_i} q_{i, j}^{-t_{i, j}}$. In analogue with \hyperref[functionmeasure]{Lemma \ref*{functionmeasure}}, we have the following lemma that shows how to obtain $\rho_{\textbf{S}} (T)$ from $C(X; Y; Q; \infty)$. 

\begin{lem}
\label{functionprocess}

If $Q = \{ q_{i, j} \}$ is a set of complex variables with magnitudes less than $1$ such that such that $\max \bigcup_{i = 1}^m Y^{(i)} < \min_{q \in Q} |q|^{dm^2}$, then $\rho_{\textbf{S}} (T)$ is the coefficient of $Q^{-T}$ in $C(X; Y; Q; \infty)$. Equivalently,
\begin{flalign*}
\rho_{\textbf{S}} (T) = \displaystyle\frac{1}{(2 \pi i)^d} \displaystyle\oint \cdots \displaystyle\oint C(X; Y; Q; \infty) \displaystyle\prod_{i = 1}^m \displaystyle\prod_{j = 1}^{d_i} q_{i, j}^{t_{i, j} - 1} d q_{i, j},
\end{flalign*}

\noindent where the contour for each $q_{i, j}$ is the positively oriented circle $|q_{i, j}| = r_{i, j}$, where $r_{i, j}$ are arbitrary positive numbers satisfying $\max \bigcup_{i = 1}^m Y^{(i)} < (r_{i, j})^{dm^2} < 1$ for each $i$ and $j$. 
\end{lem}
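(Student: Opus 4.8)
The plan is to follow the proof of Lemma \ref{functionmeasure} essentially verbatim, the one genuinely new ingredient being that the generating function here is the $u\to\infty$ object produced by Corollary \ref{limit}. I would organize the argument into an analytic part (equivalence of the two displayed assertions) and a combinatorial part (identification with $\rho_{\textbf{S}}(T)$).

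First I would dispatch the equivalence. By Lemma \ref{generatingfunctioninequality} the series defining $C(X;Y;Q;\infty)$ converges absolutely whenever $\max\bigcup_{i=1}^m Y^{(i)}<|q|^{dm^2}<1$ for every $q\in Q$, so it is an absolutely convergent Laurent series in the $q_{i,j}$ on the polyannulus cut out by these inequalities, and the prescribed circles $|q_{i,j}|=r_{i,j}$ lie inside it. Extracting the coefficient of $Q^{-T}=\prod_{i,j}q_{i,j}^{-t_{i,j}}$ is then exactly the iterated residue computed by $\frac{1}{(2\pi i)^d}\oint\cdots\oint C(X;Y;Q;\infty)\prod_{i,j}q_{i,j}^{t_{i,j}-1}\,dq_{i,j}$, so the two assertions coincide by the residue theorem.

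The substance is the identification of $\rho_{\textbf{S}}(T)$ with this coefficient. Writing $U$ for a multi-index of the same shape as $T$ and using $\simeq$ for equality of $Q^{-T}$-coefficients as in Lemma \ref{functionmeasure}, I would compute, starting from (\ref{correlationprocess}),
\[\sum_{U}\rho_{\textbf{S}}(U)\,Q^{-U}=\sum_{\lambda}\textbf{S}(\lambda)\sum_{U}\mathbf{1}_{U\subseteq\mathfrak{S}(\lambda)}\,Q^{-U}\simeq C(X;Y;Q;\infty),\]
where the last step rests on the claim that, for each fixed $(\lambda,\mu)$, the coefficient of $Q^{-T}$ in
\[\prod_{j=1}^{d_1}\sum_{k=1}^n q_{1,j}^{\,k-\lambda^{(1)}_k}\ \prod_{i=2}^m\prod_{j=1}^{d_i}\sum_{k=1}^\infty q_{i,j}^{\,k-\lambda^{(i)}_k}\]
equals $\mathbf{1}_{T\subseteq\mathfrak{S}(\lambda)}$. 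Since the $q_{i,j}$ are independent variables and, for each fixed $i$, the exponents $k-\lambda^{(i)}_k$ are strictly increasing in $k$ (because $\lambda^{(i)}$ is a partition), each inner sum contributes $0$ or $1$ to any prescribed power, namely the indicator that $t_{i,j}$ is of the form $\lambda^{(i)}_k-k$. Taking the product over $j$ (using that the integers in $T_i$ are distinct, so no power is over-counted) records whether all of $T_i$ occurs among the level-$i$ positions of $\lambda^{(i)}$, and the product over $i$ assembles these into $\mathbf{1}_{T\subseteq\mathfrak{S}(\lambda)}$; feeding this back and invoking (\ref{correlationprocess}) once more produces $\rho_{\textbf{S}}(T)$.

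Two points demand care, and I expect the level-one truncation to be the main obstacle. At levels $i\ge2$ the inner sum runs to $k=\infty$ and therefore detects membership in the full position set of $\lambda^{(i)}$, but at level one the sum is capped at $k=n$, and I must argue that this cap discards nothing relevant. This is precisely where the hypotheses of Theorem \ref{process2} enter: on the support of $\textbf{S}$ one has $\ell(\lambda^{(1)})\le n$, and the bound relating $n$ to $\bigcup_{i}T_i$ is chosen so that every target $t_{1,j}$ lies in the range of positions $\lambda^{(1)}_k-k$ attainable with $k\le n$, whence the truncated sum still records the correct indicator. The second point is the legitimacy of the whole manipulation on $C(X;Y;Q;\infty)$: here I would invoke Corollary \ref{limit} together with the observation that the majorant furnished by Lemma \ref{generatingfunctioninequality} is uniform in $u$, which permits passing the limit $u\to\infty$ through the coefficient functional (equivalently, through the contour integrals) by dominated convergence on the circles $|q_{i,j}|=r_{i,j}$.
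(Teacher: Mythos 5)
Your proposal is correct and takes essentially the same approach as the paper: the paper omits this proof, stating that it is similar to that of Lemma \ref{functionmeasure}, and your argument is precisely that adaptation (absolute convergence from Lemma \ref{generatingfunctioninequality} plus the residue theorem for the equivalence of the two formulations, then term-by-term coefficient extraction identifying $\textbf{1}_{T \subseteq \mathfrak{S}(\lambda)}$, with the hypothesis on $n$ handling the level-one truncation of the sum at $k = n$). If anything, you are slightly more careful than the paper's template, since you explicitly justify interchanging the $u \to \infty$ limit (Corollary \ref{limit}) and the infinite sum over $(\lambda, \mu)$ with the coefficient functional via the uniform majorant.
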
 

We omit the proof of \hyperref[functionprocess]{Lemma \ref*{functionprocess}} because it is similar to that of \hyperref[functionmeasure]{Lemma \ref*{functionmeasure}}. 

\subsection{A Nested Scalar Product} 

\label{ProductW}

Now, we wish to find an analogue of (\hyperref[operatorfunction]{\ref*{operatorfunction}}) for $C(X; Y; Q; \infty)$. The proof of (\hyperref[operatorfunction]{\ref*{operatorfunction}}) used the fact that the Schur polynomials are eigenfunctions of the Macdonald $q$-difference operators. However, the function $C(X; Y; Q; u)$ is expressed in terms of the Schur process weights $\mathcal{W}_{X, Y} (\lambda, \mu)$; these contain products of skew-Schur functions, which are not always eigenfunctions of the Macdonald difference operators. Therefore, in order to apply \hyperref[eigenfunction]{Proposition \ref*{eigenfunction}}, we will first use \hyperref[skew]{Corollary \ref*{skew}} to express the weights $\mathcal{W}_{X, Y} (\lambda, \mu)$ in terms of scalar products of Schur functions. 

\begin{lem}
\label{weightscalar}

Let $A^{(1)}, A^{(2)}, \ldots , A^{(m - 1)}$ and $B^{(1)}, B^{(2)}, \ldots , B^{(m - 1)}$ be countably infinite sets of complex variables. For all integers $i \in [1, m - 1]$ and $u \ge 1$, let $A_{[1, u]}^{(i)}$ denote the finite set consisting of the first $u$ elements of $A^{(i)}$; define $B_{[1, u]}^{(i)}$ similarly. For each positive integer $u$, we have that 
\begin{flalign*}
\mathcal{W}_{X, Y} (\lambda, \mu) \displaystyle\prod_{i = 1}^{m - 1} \textbf{1}_{|\mu^{(i)}| \le u} = s_{\lambda^{(1)}} \big(X^{(1)} \big) & \displaystyle\prod_{i = 1}^{m-1} \Bigg( \Big\langle s_{\lambda^{(i+1)}} \big(X^{(i + 1)}, A_{[1, u]}^{(i)} \big), s_{\mu^{(i)}} \big(A_{[1, u]}^{(i)} \big) \Big\rangle_{A_{[1, u]}^{(i)}} \\
& \quad \times \Big\langle s_{\lambda^{(i)}} \big(Y^{(i)}, B_{[1, u]}^{(i)} \big), s_{\mu^{(i)}} \big(B_{[1, u]}^{(i)} \big) \Big\rangle_{B_{[1, u]}^{(i)}} \Bigg) s_{\lambda^{(m)}} \big(Y^{(m)} \big). 
\end{flalign*}
\end{lem}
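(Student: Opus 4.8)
The plan is to peel off the skew-Schur factors appearing in $\mathcal{W}_{X, Y}(\lambda, \mu)$ one at a time using \hyperref[skew]{Corollary \ref*{skew}}. Recall from (\ref{weight}) that the two ``outer'' factors $s_{\lambda^{(1)}}\big(X^{(1)}\big)$ and $s_{\lambda^{(m)}}\big(Y^{(m)}\big)$ already appear, unchanged, on the right side of the claimed identity, and that they are unaffected by the indicator $\prod_{i=1}^{m-1}\textbf{1}_{|\mu^{(i)}|\le u}$. It therefore suffices to establish, for each fixed $i \in [1, m-1]$, the ``per-$i$'' identity
\begin{flalign*}
\textbf{1}_{|\mu^{(i)}| \le u}\, s_{\lambda^{(i+1)} / \mu^{(i)}} \big( X^{(i+1)} \big)\, s_{\lambda^{(i)} / \mu^{(i)}} \big( Y^{(i)} \big) = \Big\langle s_{\lambda^{(i+1)}} \big( X^{(i+1)}, A_{[1,u]}^{(i)} \big), s_{\mu^{(i)}} \big( A_{[1,u]}^{(i)} \big) \Big\rangle_{A_{[1,u]}^{(i)}} \Big\langle s_{\lambda^{(i)}} \big( Y^{(i)}, B_{[1,u]}^{(i)} \big), s_{\mu^{(i)}} \big( B_{[1,u]}^{(i)} \big) \Big\rangle_{B_{[1,u]}^{(i)}}.
\end{flalign*}
Taking the product of these identities over $i$, distributing $\prod_{i=1}^{m-1}\textbf{1}_{|\mu^{(i)}|\le u}$ so that each $\textbf{1}_{|\mu^{(i)}|\le u}$ sits with the $i$-th pair of skew factors, and reinserting the two outer factors then yields the lemma.

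To prove the per-$i$ identity, I would first use the idempotency $\textbf{1}_{|\mu^{(i)}| \le u} = \textbf{1}_{|\mu^{(i)}| \le u}^2$ to split the single indicator into one copy attached to each of the two skew-Schur factors. I then apply \hyperref[skew]{Corollary \ref*{skew}} twice: once with $(\lambda, \mu, X, Z) = \big(\lambda^{(i+1)}, \mu^{(i)}, X^{(i+1)}, A_{[1,u]}^{(i)}\big)$ and once with $\big(\lambda^{(i)}, \mu^{(i)}, Y^{(i)}, B_{[1,u]}^{(i)}\big)$, where $A_{[1,u]}^{(i)}$ and $B_{[1,u]}^{(i)}$ play the role of the auxiliary set $Z$ of $k = u$ variables. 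This converts each of
\begin{flalign*}
\textbf{1}_{|\mu^{(i)}| \le u}\, s_{\lambda^{(i+1)} / \mu^{(i)}} \big( X^{(i+1)} \big), \qquad \textbf{1}_{|\mu^{(i)}| \le u}\, s_{\lambda^{(i)} / \mu^{(i)}} \big( Y^{(i)} \big)
\end{flalign*}
into the corresponding scalar product, and their product is precisely the right side of the per-$i$ identity.

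Because the argument is a termwise application of an already-established corollary, I do not anticipate any genuine obstacle; the proof is essentially bookkeeping. The single point demanding care is the handling of the indicator functions: each of the two skew factors indexed by $i$ acquires its own factor $\textbf{1}_{|\mu^{(i)}| \le u}$ upon applying \hyperref[skew]{Corollary \ref*{skew}}, so a naive multiplication produces $\textbf{1}_{|\mu^{(i)}| \le u}^2$, and it is precisely the idempotency of the indicator that reconciles this with the single factor $\textbf{1}_{|\mu^{(i)}| \le u}$ present in $\prod_{i=1}^{m-1}\textbf{1}_{|\mu^{(i)}|\le u}$ on the left side of the lemma. I would also note that the auxiliary variable sets $A^{(i)}$ and $B^{(i)}$ are chosen distinct across the different factors, so that the scalar products appearing in different terms are taken in independent sets of variables and do not interact.
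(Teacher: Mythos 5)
Your proposal is correct and follows exactly the paper's own argument, which simply applies \hyperref[skew]{Corollary \ref*{skew}} termwise to the definition (\hyperref[weight]{\ref*{weight}}) of $\mathcal{W}_{X, Y}(\lambda, \mu)$; your write-up merely spells out the bookkeeping (one application of the corollary per skew factor, with idempotency of the indicator $\textbf{1}_{|\mu^{(i)}| \le u}$ absorbing the squared indicators) that the paper leaves implicit.
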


\begin{proof}
This follows from applying \hyperref[skew]{Corollary \ref*{skew}} to (\hyperref[weight]{\ref*{weight}}). 
\end{proof}

\noindent Now, in addition to the assumptions on $T$, $n$, $X^{(1)}, X^{(2)}, \ldots , X^{(m)}$, and $Y^{(1)}, Y^{(2)}, \ldots , Y^{(m)}$ made above, we will also assume the following for the remainder of this section. 
\begin{itemize}
\item{Let $s_1, s_2, \ldots , s_m$ be positive numbers (which will be contour radii) less than $1$ and greater than $\big( \max \bigcup_{i = 1}^m Y^{(i)} \big)^{1 / dm^2}$.}
\item{The $s_i$ are sufficiently close to $1$ such that there exist positive numbers $r_1 > r_2 > \cdots > r_m$ (which will also be contour radii) all greater than $1$ and all less than $\max \bigcup_{i = 1}^m \big( X^{(i)} \big)^{-1}$ and $\max \bigcup_{i = 1}^m \big( Y^{(i)} \big)^{-1}$, such that $r_i s_i > r_{i + 1}$ for each integer $i \in [1, m - 1]$.}
\item{For each integer $i \in [1, m]$ and $j \in [1, d_i]$, the element $q_{i, j} \in Q$ is a complex variable with magnitude $s_i$.}
\item{The $A^{(1)}, A^{(2)}, \ldots , A^{(m - 1)}$ and $B^{(1)}, B^{(2)}, \ldots , B^{(m - 1)}$ are countably infinite sets of positive real variables whose magnitudes are all less than $s_1 r_1^{-1}$.}
\end{itemize}

\noindent Under these assumptions, we obtain the following analogue of (\hyperref[operatorfunction]{\ref*{operatorfunction}}) for $C(X; Y; Q; u)$. For the remainder of this paper, we will denote $t_{i, j}$ and $q_{i, j}$ by $t_{ij}$ and $q_{ij}$, respectively. 

\begin{prop}
\label{weightoperators} 

For each finite positive integer $u$, we have that 
\begin{flalign}
\label{differencescalar}
Z_{X, Y} C(X; Y; Q; u) &= \Bigg\langle \cdots \bigg\langle \Big\langle \Big( \displaystyle\prod_{j = 1}^{d_1} q_{1j}^n [\tilde{D}_{n; q_{1j}}^1]_{X^{(1)}} \Big) F \big(X^{(1)}; Y^{(1)}, B_{[1, u]}^{(1)} \big), F \big(B_{[1, u]}^{(1)}; A_{[1, u]}^{(1)} \big) \Big\rangle_{B_{[1, u]}^{(1)}}, \nonumber \\
& \quad \Big( \displaystyle\prod_{j = 1}^{d_2} q_{2j}^{n + u} [\tilde{D}_{n + u; q_{2j}}^1]_{ \{ X^{(2)}, A_{[1, u]}^{(1)} \} } \Big) F \big(X^{(2)}, A_{[1, u]}^{(1)}; Y^{(2)}, B_{[1, u]}^{(2)} \big) \bigg\rangle_{A_{[1, u]}^{(1)}}, 	\cdots , \nonumber \\
& \quad \Big( \displaystyle\prod_{j = 1}^{d_m} q_{mj}^{n + u} [\tilde{D}_{n + u; q_{mj}}^1]_{ \{ X^{(m)}, A_{[1, u]}^{(m - 1)} \} } \Big) F \big(X^{(m)}, A_{[1, u]}^{(m - 1)}; Y^{(m)} \big) \Bigg\rangle_{A_{[1, u]}^{(m - 1)}}. 
\end{flalign}
\end{prop}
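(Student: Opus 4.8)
The plan is to verify (\ref{differencescalar}) by expanding the nested scalar product on the right one pairing at a time, from the innermost outward, and matching the result against the expansion of the left-hand side obtained from (\ref{generatingfunctionprocess}) and \hyperref[weightscalar]{Lemma \ref*{weightscalar}}. Three ingredients drive the computation: the Cauchy identity (\ref{sum}), which expands each Cauchy product $F$ on the right as $\sum_{\kappa} s_{\kappa} s_{\kappa}$; \hyperref[eigenfunction]{Proposition \ref*{eigenfunction}}, which evaluates the action of each difference operator on these Schur functions; and \hyperref[skew]{Corollary \ref*{skew}}, which evaluates each scalar product as a truncated skew-Schur polynomial. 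I would first record that $q^{N}\tilde{D}_{N;q}^1$ has eigenvalue $\sum_{k=1}^{N} q^{k-\lambda_k}$ on $s_{\lambda}$ in $N$ variables, so that the operator $\prod_{j=1}^{d_i} q_{ij}^{N_i}\tilde{D}_{N_i;q_{ij}}^1$ attached to level $i$ — with $N_1 = n$ acting on $X^{(1)}$, and $N_i = n + u$ acting on $\{ X^{(i)}, A_{[1, u]}^{(i - 1)} \}$ for $i \ge 2$ — reproduces exactly the factor $\prod_{j} \sum_{k} q_{ij}^{k - \lambda_k^{(i)}}$ appearing in (\ref{generatingfunctionprocess}).

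Then I would peel the product inductively. The innermost pairing $\big\langle (\prod_{j} q_{1j}^{n} \tilde{D}_{n;q_{1j}}^1) F(X^{(1)}; Y^{(1)}, B^{(1)}_{[1,u]}), F(B^{(1)}_{[1,u]}; A^{(1)}_{[1,u]}) \big\rangle_{B^{(1)}_{[1,u]}}$ expands, via (\ref{sum}) and \hyperref[eigenfunction]{Proposition \ref*{eigenfunction}}, into $\sum_{\lambda^{(1)}} \varepsilon_1 \, s_{\lambda^{(1)}}(X^{(1)}) s_{\lambda^{(1)}}(Y^{(1)}, B^{(1)}_{[1,u]})$ paired against $\sum_{\mu^{(1)}} s_{\mu^{(1)}}(B^{(1)}_{[1,u]}) s_{\mu^{(1)}}(A^{(1)}_{[1,u]})$, where $\varepsilon_1$ denotes the level-$1$ eigenvalue factor; by \hyperref[skew]{Corollary \ref*{skew}} the $B^{(1)}$ pairing collapses to $\textbf{1}_{|\mu^{(1)}| \le u}\, s_{\lambda^{(1)} / \mu^{(1)}}(Y^{(1)})$, leaving a function of $A^{(1)}_{[1,u]}$ carried by $s_{\mu^{(1)}}(A^{(1)}_{[1,u]})$. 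The next ($A^{(1)}$) pairing expands $(\prod_{j} q_{2j}^{n+u}\tilde{D}_{n+u;q_{2j}}^1) F(X^{(2)}, A^{(1)}_{[1,u]}; Y^{(2)}, B^{(2)}_{[1,u]})$ the same way and applies \hyperref[skew]{Corollary \ref*{skew}} once more, now with $Z = A^{(1)}_{[1,u]}$, to produce $\textbf{1}_{|\mu^{(1)}| \le u}\, s_{\lambda^{(2)} / \mu^{(1)}}(X^{(2)})$ and the factor $\varepsilon_2$. Iterating through all $2m - 1$ pairings over $B^{(1)}, A^{(1)}, \ldots, B^{(m - 1)}, A^{(m - 1)}$, each $\mu^{(i)}$ acquires $\textbf{1}_{|\mu^{(i)}| \le u}$ from each of its two pairings (harmless, since $\textbf{1}^2 = \textbf{1}$) together with the skew factors $s_{\lambda^{(i)} / \mu^{(i)}}(Y^{(i)})$ and $s_{\lambda^{(i+1)} / \mu^{(i)}}(X^{(i+1)})$, while $s_{\lambda^{(1)}}(X^{(1)})$ and $s_{\lambda^{(m)}}(Y^{(m)})$ survive as the boundary factors. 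By (\ref{weight}) and \hyperref[weightscalar]{Lemma \ref*{weightscalar}} these reassemble into $\mathcal{W}_{X, Y}(\lambda, \mu) \prod_{i=1}^{m-1} \textbf{1}_{|\mu^{(i)}| \le u}$, the accumulated $\prod_i \varepsilon_i$ are precisely the eigenvalue factors of (\ref{generatingfunctionprocess}), and summing over $(\lambda, \mu)$ yields $Z_{X, Y} C(X; Y; Q; u)$.

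I expect the main obstacle to be rigor rather than the algebraic identity, which falls out cleanly once the indexing is aligned. Two points require care. First, the bookkeeping: one must track which variable set each operator acts on, so that the number of variables $N_i$ matches the upper limit of the corresponding eigenvalue sum, and must follow the index shift by which the operator for $\lambda^{(i)}$ (with $i \ge 2$) acts on $\{ X^{(i)}, A_{[1,u]}^{(i-1)} \}$ and is paired at the $A^{(i-1)}$ level. Second, and more seriously, one must justify interchanging the infinite sums over the $\lambda^{(i)}$ (coming from the Cauchy expansions of the $F$'s) with the difference operators and the scalar products, and must confirm that the operators may be applied term by term to the convergent power series $F$ before the pairings are evaluated (the extended bilinear form for convergent power series introduced after \hyperref[skew]{Corollary \ref*{skew}} is what makes these pairings meaningful). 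This is legitimate under the magnitude constraints imposed on the $s_i$, $r_i$, $A^{(i)}$, and $B^{(i)}$, which force absolute convergence of every $F$ in the relevant region, together with estimates of the type in \hyperref[generatingfunctioninequality]{Lemma \ref*{generatingfunctioninequality}}; since the $\textbf{1}_{|\mu^{(i)}| \le u}$ truncation makes each $\mu^{(i)}$ sum finite for finite $u$, only the $\lambda^{(i)}$ sums need this justification. Once these convergence points are settled, the term-by-term peeling above establishes (\ref{differencescalar}).
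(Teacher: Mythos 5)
Your proposal is correct and is essentially the paper's own proof read in the opposite direction: the paper starts from $Z_{X,Y} C(X;Y;Q;u)$, substitutes Lemma \ref{weightscalar} and Proposition \ref{eigenfunction}, and then resums over $\lambda^{(1)}, \mu^{(1)}, \lambda^{(2)}, \ldots$ using bilinearity and the Cauchy identity (\ref{sum}) to build up the nested scalar product, whereas you expand the nested scalar product via (\ref{sum}), evaluate the operators by Proposition \ref{eigenfunction}, and collapse each pairing by Corollary \ref{skew} --- the same chain of equalities with the same ingredients, and with the same (correctly flagged) interchange-of-summation points. One trivial slip: there are $2m-2$ pairings (over $B^{(1)}, A^{(1)}, \ldots, B^{(m-1)}, A^{(m-1)}$), not $2m-1$.
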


\begin{proof}
By \hyperref[weightscalar]{Lemma \ref*{weightscalar}} and \hyperref[eigenfunction]{Proposition \ref*{eigenfunction}}, we obtain that $Z_{X, Y} C(X; Y; Q; u)$ is equal to 
\begin{flalign*}
& \displaystyle\sum_{\lambda, \mu} \Big( \displaystyle\prod_{j = 1}^{d_1} q_{1j}^n [\tilde{D}_{n; q_{1j}}^1]_{X^{(1)}} \Big) s_{\lambda^{(1)}} \big(X^{(1)} \big) \bigg( \displaystyle\prod_{i = 1}^{m - 1} \Big\langle s_{\lambda^{(i)}} \big(Y^{(i)}, B_{[1, u]}^{(i)} \big), s_{\mu^{(i)}} \big(B_{[1, u]}^{(i)} \big) \Big\rangle_{B_{[1, u]}^{(i)}} \\
& \quad \times \Big\langle \Big(\displaystyle\prod_{j = 1}^{d_{i + 1}} q_{ij}^{n + u} [\tilde{D}_{n + u; q_{ij}}^1]_{\{ X^{(i + 1)}, A_{[1, u]}^{(i)} \}} \Big) s_{\lambda^{(i+1)}} \big(X^{(i + 1)}, A_{[1, u]}^{(i)} \big), s_{\mu^{(i)}} \big(A_{[1, u]}^{(i)} \big) \Big\rangle_{A_{[1, u]}^{(i)}} \bigg) s_{\lambda^{(m)}} \big(Y^{(m)} \big) 
\end{flalign*}

\noindent where $\lambda$ is summed over $\mathbb{Y}^m$ and $\mu$ is summed over $\mathbb{Y}^{m - 1}$. Let us first sum over $\lambda^{(1)}$, while fixing $\lambda^{(2)}, \lambda^{(3)}, \ldots , \lambda^{(m)}$ and $\mu^{(1)}, \mu^{(2)} , \ldots , \mu^{(m - 1)}$. From bilinearity of the scalar product, we obtain that $Z_{X, Y} C(X; Y; Q; u)$ is equal to
\begin{flalign*}
\displaystyle\sum_{\lambda \backslash \lambda^{(1)} } & \displaystyle\sum_{\mu} \bigg\langle \displaystyle\prod_{j = 1}^{d_1} q_{1j}^n [\tilde{D}_{n; q_{1j}}^1]_{X^{(1)}} \displaystyle\sum_{\lambda^{(1)} \in \mathbb{Y}} s_{\lambda^{(1)}}  \big(X^{(1)} \big) s_{\lambda^{(1)}} \big(Y^{(1)}, B_{[1, u]}^{(1)} \big), s_{\mu^{(1)}} \big(B_{[1, u]}^{(1)} \big) \bigg\rangle_{B_{[1, u]}^{(1)}} \\
& \quad \times \bigg\langle \Big( \displaystyle\prod_{j = 1}^{d_2} q_{2j}^{n + u} [\tilde{D}_{n + u; q_{2j}}^1]_{ \{ X^{(2)}, A_{[1, u]}^{(1)} \} } \Big) s_{\lambda^{(2)}} \big(X^{(2)}, A_{[1, u]}^{(1)} \big), s_{\mu^{(1)}} \big(A_{[1, u]}^{(1)} \big) \bigg\rangle_{A_{[1, u]}^{(1)}} \\ 
& \quad \times \bigg( \displaystyle\prod_{i = 2}^{m-1} \Big\langle \Big( \displaystyle\prod_{j = 1}^{d_{i + 1}} q_{ij}^{n + u} [\tilde{D}_{n + u; q_{ij}}^1]_{ \{ X^{(i + 1)}, A_{[1, u]}^{(i)} \} } \Big) s_{\lambda^{(i+1)}} \big(X^{(i + 1)}, A_{[1, u]}^{(i)} \big), s_{\mu^{(i)}} \big(A_{[1, u]}^{(i)} \big) \Big\rangle_{A_{[1, u]}^{(i)}} \\
& \quad \times \Big\langle s_{\lambda^{(i)}} \big(Y^{(i)}, B_{[1, u]}^{(i)} \big), s_{\mu^{(i)}} \big(B_{[1, u]}^{(i)} \big) \Big\rangle_{B_{[1, u]}^{(i)}} \bigg) s_{\lambda^{(m)}} \big(Y^{(m)} \big). 
\end{flalign*}

\noindent Applying the Cauchy identity (\hyperref[sum]{\ref*{sum}}), summing over $\mu^{(1)}$ (while keeping $\lambda^{(2)}, \lambda^{(3)}, \ldots , \lambda^{(m)}$ and $\mu^{(2)}, \mu^{(3)}, \ldots , \mu^{(m - 1)}$ fixed), and using bilinearity of the scalar product yields that $Z_{X, Y} C(X; Y; Q; u)$ is equal to  
\begin{flalign*}
\displaystyle\sum_{\lambda \backslash \lambda^{(1)} } & \displaystyle\sum_{\mu \backslash \mu^{(1)}} \Bigg\langle \bigg\langle \Big( \displaystyle\prod_{j = 1}^{d_1} q_{1j}^n [\tilde{D}_{n; q_{1j}}^1]_{X^{(1)}} \Big) F \big( X^{(1)}; Y^{(1)}, B_{[1, u]}^{(1)} \big), \displaystyle\sum_{\mu^{(1)} \in \mathbb{Y}} s_{\mu^{(1)}}  \big( A_{[1, u]}^{(1)} \big) s_{\mu^{(1)}} \big(B_{[1, u]}^{(1)} \big) \bigg\rangle_{B_{[1, u]}^{(1)}}, \\
& \quad \Big( \displaystyle\prod_{j = 1}^{d_2} q_{2j}^{n + u} [\tilde{D}_{n + u; q_{2j}}^1]_{ \{ X^{(2)}, A_{[1, u]}^{(1)} \} } \Big) s_{\lambda^{(2)}} \big(X^{(2)}, A_{[1, u]}^{(1)} \big) \Bigg\rangle_{A_{[1, u]}^{(1)}} \\ 
& \quad \times \bigg( \displaystyle\prod_{i = 2}^{m-1} \Big\langle \Big( \displaystyle\prod_{j = 1}^{d_{i + 1}} q_{ij}^{n + u} [\tilde{D}_{n + u; q_{ij}}^1]_{ \{ X^{(i + 1)}, A_{[1, u]}^{(i)} \} } \Big) s_{\lambda^{(i+1)}} \big(X^{(i + 1)}, A_{[1, u]}^{(i)} \big), s_{\mu^{(i)}} \big(A_{[1, u]}^{(i)} \big) \Big\rangle_{A_{[1, u]}^{(i)}} \\
& \quad \times \Big\langle s_{\lambda^{(i)}} \big(Y^{(i)}, B_{[1, u]}^{(i)} \big), s_{\mu^{(i)}} \big(B_{[1, u]}^{(i)} \big) \Big\rangle_{B_{[1, u]}^{(i)}} \bigg) s_{\lambda^{(m)}} \big(Y^{(m)} \big). 
\end{flalign*}

\noindent Applying the Cauchy identity (\hyperref[sum]{\ref*{sum}}) again and repeating this procedure until we sum over all elements of $\lambda$ and $\mu$ yields (\hyperref[differencescalar]{\ref*{differencescalar}}). 
\end{proof}

\noindent The following lemma evaluates a nested scalar product that will appear when we apply \hyperref[operatorsmeasure]{Proposition \ref*{operatorsmeasure}} to the right side of (\hyperref[differencescalar]{\ref*{differencescalar}}.

\begin{lem}
\label{fhscalar}
Let $Z = \{ z_{ij} = z_{i, j} \}$ be a set of complex variables, where $i$ ranges from $1$ to $m$ and $j$ ranges from $1$ to $d_i$, and suppose that $|z_{ij}| = r_i$ for all $i$ and $j$. Then, 
\begin{flalign} 
\label{scalars}
\displaystyle\lim_{u \rightarrow \infty} & \Bigg\langle \cdots \bigg\langle  \Big\langle \big\langle F\big(X^{(1)}; B_{[1, u]}^{(1)}, Y^{(1)} \big) \displaystyle\prod_{k = 1}^{d_1} H_{q_{1k}} \big( B_{[1, u]}^{(1)}; \{ q_{1k}^{-1} z_{1k}^{-1} \} \big) , F\big(B_{[1, u]}^{(1)}; A_{[1, u]}^{(1)} \big) \big\rangle_{B_{[1, u]}^{(1)}}, \nonumber \\ 
& F \big( X^{(2)}, A_{[1, u]}^{(1)}; Y^{(2)}, B_{[1, u]}^{(2)} \big) \displaystyle\prod_{k = 1}^{d_2} H_{q_{2k}} \big( A_{[1, u]}^{(1)};| \{ z_{2k} \} \big) H_{q_{2k}} \big( B_{[1, u]}^{(2)}; \{ q_{2k}^{-1} z_{2k}^{-1} \} \big) \Big\rangle_{A_{[1, u]}^{(1)}}, \nonumber \\
& F \big( A_{[1, u]}^{(2)}; B_{[1, u]}^{(2)} \big) \bigg\rangle_{B_{[1, u]}^{(2)}}, \cdots , F \big( X^{(m)}, A_{[1, u]}^{(m - 1)}; Y^{(m)} \big) \displaystyle\prod_{k = 1}^{d_m} H_{q_{mk}} \big( A_{[1, u]}^{(m - 1)}; \{ z_{mk} \} \big) \Bigg\rangle_{A_{[1, u]}^{(m - 1)}}
\end{flalign}

\noindent is equal to 
\begin{flalign}
\label{scalarsproduct}
& \displaystyle\prod_{1\le h \le i \le m} F \big( X^{(h)}; Y^{(i)} \big) \displaystyle\prod_{1\le h < i \le m} \displaystyle\prod_{j = 1}^{d_h} \displaystyle\prod_{k = 1}^{d_i} \displaystyle\frac{(z_{hj} - z_{ik})(q_{hj} z_{hj} - q_{ik} z_{ik})}{(q_{hj} z_{hj} - z_{ik}) (z_{hj} - q_{ik} z_{ik})} \nonumber \\
& \quad \times \displaystyle\prod_{h = 1}^{m - 1} \displaystyle\prod_{i = h + 1}^m \displaystyle\prod_{k = 1}^{d_i} H_{q_{ik}} \big(X^{(h)}; \{ z_{ik} \} \big) \displaystyle\prod_{h = 2}^m \displaystyle\prod_{i = 1}^{h - 1} \displaystyle\prod_{j = 1}^{d_i} H_{q_{ij}} \big( Y^{(h)}; \{ q_{ij}^{-1} z_{ij}^{-1} \} \big). 
\end{flalign}
\end{lem}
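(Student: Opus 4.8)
The plan is to evaluate the nested scalar product by collapsing its $2(m-1)$ component scalar products one at a time, working from the innermost (over $B^{(1)}$) outward to the outermost (over $A^{(m-1)}$), applying Lemma~\ref{scalarseries} at each stage. The first step is to rewrite every factor $F$ and $H_q$ appearing in \eqref{scalars} through the exponential forms \eqref{equalityf} and \eqref{equalityh}, so that each argument of each scalar product becomes an exponential $\exp\big(\sum_{\ell\ge1} p_\ell(Z)\gamma_\ell/\ell\big)$ in the power sums of the set $Z$ being summed over, with coefficients $\gamma_\ell$ built from the power sums of the remaining (fixed) sets $X^{(h)}, Y^{(h)}$ together with the single-variable contributions $z_{ik}^{\pm\ell}(1-q_{ik}^{\ell})$. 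In this form Lemma~\ref{scalarseries} applies: each scalar product $\langle a, b\rangle_Z$ collapses, in the limit $u\to\infty$, to the Cauchy-type product $\exp\big(\sum_{\ell\ge1}\alpha_\ell\beta_\ell/\ell\big)$, where $\alpha_\ell,\beta_\ell$ are the coefficients of $p_\ell(Z)$ in its two arguments. The convergence hypotheses of Lemma~\ref{scalarseries} hold throughout, by the magnitude constraints on $s_i, r_i$ and on $A^{(i)}, B^{(i)}$ (namely $|A^{(i)}|, |B^{(i)}| < s_1 r_1^{-1}$, $|z_{ik}| = r_i$, $|q_{ik}| = s_i$, $r_i s_i > r_{i+1}$), which force every $F$ and $H_q$ product that arises to converge absolutely.

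The repeating step is then a bookkeeping of which factors each collapse produces. Collapsing a $B^{(i)}$-scalar product, whose second argument is a plain Cauchy product $F(A^{(i)}; B^{(i)})$ carrying no single-variable factors, merely transfers all $B^{(i)}$-dependence into $A^{(i)}$-dependence and produces no new factors. Collapsing an $A^{(i)}$-scalar product is where the four families of factors in \eqref{scalarsproduct} are generated: cross terms between the accumulated $p_\ell(X^{(h)})$ (for $h \le i$) and $p_\ell(Y^{(i+1)})$ give the $F(X^{(h)}; Y^{(i+1)})$ factors; cross terms between $p_\ell(X^{(h)})$ and the $z_{(i+1)k}^{\ell}(1-q_{(i+1)k}^{\ell})$ contributions give the $H_{q_{(i+1)k}}(X^{(h)}; \{z_{(i+1)k}\})$ factors; cross terms between the accumulated $q_{hj}^{-\ell}z_{hj}^{-\ell}(1-q_{hj}^{\ell})$ contributions and $p_\ell(Y^{(i+1)})$ give the $H_{q_{hj}}(Y^{(i+1)}; \{q_{hj}^{-1}z_{hj}^{-1}\})$ factors; and cross terms between the two families of single-variable contributions give the rational cross-ratios. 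The factors that still involve $X^{(h)}$ and the surviving single-variable data are carried forward into the next collapse, so that after the final ($A^{(m-1)}$) collapse every pair $(h, i)$ with $h < i$ has contributed exactly once, matching \eqref{scalarsproduct}.

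The one genuinely computational point is the cross-ratio. For a fixed pair of single variables, setting $a = q_{hj}^{-1}z_{hj}^{-1}z_{ik}$ gives the exponential $\exp\big(\sum_{\ell\ge1} a^{\ell}(1-q_{hj}^{\ell})(1-q_{ik}^{\ell})/\ell\big)$; expanding $(1-q_{hj}^{\ell})(1-q_{ik}^{\ell})$ and summing each resulting geometric series via $\sum_{\ell\ge1} x^{\ell}/\ell = -\log(1-x)$ collapses this to $\frac{(1-aq_{hj})(1-aq_{ik})}{(1-a)(1-aq_{hj}q_{ik})}$, which after clearing the common factors of $z_{hj}$ and $q_{hj}z_{hj}$ equals the factor $\frac{(z_{hj}-z_{ik})(q_{hj}z_{hj}-q_{ik}z_{ik})}{(q_{hj}z_{hj}-z_{ik})(z_{hj}-q_{ik}z_{ik})}$ appearing in \eqref{scalarsproduct}.

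I expect the main obstacle to be the rigorous justification of the limit rather than any single algebraic identity: Lemma~\ref{scalarseries} removes one truncation $\textbf{1}_{|\lambda|\le u}$ at a time, whereas in \eqref{scalars} all the auxiliary sets $A^{(i)}, B^{(i)}$ are truncated by one common parameter $u$, so one must argue that this single limit may be taken as an iterated limit collapsing the scalar products in sequence. The cleanest way to organize both this limit and the bookkeeping is to induct on $m$: collapsing the innermost pair over $B^{(1)}$ and $A^{(1)}$ pulls out exactly the factors of \eqref{scalarsproduct} carrying the index $h=1$, absorbs the level-$1$ data ($X^{(1)}$ and the variables $z_{1k}, q_{1k}$) into the level-$2$ arguments, and reduces \eqref{scalars} to a nested product of the same shape with $m$ replaced by $m-1$. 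The absolute-convergence estimates of the kind used in Lemma~\ref{generatingfunctioninequality} justify passing each truncation to infinity and interchanging the limits.
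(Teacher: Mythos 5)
Your proposal is correct and is essentially the paper's own proof: both rewrite every $F$ and $H_q$ factor in exponential form via (\ref{equalityf}) and (\ref{equalityh}), collapse the nested scalar products from the innermost outward using Lemma \ref{scalarseries}, obtain the cross-ratio factors from the Taylor expansion of $\log(1-x)$ exactly as you compute, justify convergence by the constraints on the $r_i$, $s_i$, $A^{(i)}$, $B^{(i)}$ (e.g.\ $r_1 s_1 > r_2$), and finish by inductively repeating the two-level collapse. The limit-interchange point you flag is genuine but resolves even more simply than your proposed appeal to absolute-convergence estimates: the truncated pairing $\langle \cdot, \cdot \rangle_{A^{(i)}_{[1,u]}}$ annihilates every $p_{\lambda}$ with $|\lambda| > u$, which is precisely where a truncated inner scalar product differs from its $u \to \infty$ limit, so replacing each inner scalar product by its limit is exact at every finite $u$.
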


\begin{proof}
Let us begin by evaluating the first scalar product appearing in the nested scalar product (\hyperref[scalars]{\ref*{scalars}}), which is 
\begin{flalign}
\label{1scalars}
\displaystyle\lim_{u \rightarrow \infty} \bigg\langle F \big( X^{(1)}; B_{[1, u]}^{(1)}, Y^{(1)} \big) \displaystyle\prod_{k = 1}^{d_1} H_{q_{1k}} \big( B_{[1, u]}^{(1)}; \{ q_{1k}^{-1} z_{1k}^{-1} \} \big) , F\big(B_{[1, u]}^{(1)}; A_{[1, v]}^{(1)} \big) \bigg\rangle_{B_{[1, u]}^{(1)}}, 
\end{flalign}

\noindent for any positive integer $v$. Using (\hyperref[equalityf]{\ref*{equalityf}}) and (\hyperref[equalityh]{\ref*{equalityh}}), we obtain that (\hyperref[1scalars]{\ref*{1scalars}}) is equal to 
\begin{flalign}
\label{1scalarexponential}
F \big(X^{(1)}; Y^{(1)} \big)\displaystyle\lim_{u \rightarrow \infty} \Bigg\langle & \exp \bigg( \displaystyle\sum_{i = 1}^\infty \displaystyle\frac{p_i \big(B_{[1, u]}^{(1)} \big)}{i} \Big( p_i \big(X^{(1)} \big) + \displaystyle\sum_{j = 1}^{d_1} \displaystyle\frac{1 - q_{1j}^i}{q_{1j}^i z_{1j}^i} \Big) \bigg), \nonumber \\ 
& \exp \bigg( \displaystyle\sum_{i = 1}^\infty \displaystyle\frac{p_i \big(A_{[1, v]}^{(1)} \big) p_i \big(B_{[1, u]}^{(1)} \big)}{i} \bigg) \Bigg\rangle_{B_{[1, u]}^{(1)}}. 
\end{flalign}

\noindent Applying \hyperref[scalarseries]{Lemma \ref*{scalarseries}}, we obtain that (\hyperref[1scalarexponential]{\ref*{1scalarexponential}}) is equal to 
\begin{flalign*}
F \big(X^{(1)}; Y^{(1)} \big) \exp \Bigg( \displaystyle\sum_{i = 1}^\infty \displaystyle\frac{p_i \big(A_{[1, v]}^{(1)} \big)}{i} \bigg( p_i \big(X^{(1)} \big) + \displaystyle\sum_{j = 1}^{d_1} \displaystyle\frac{1 - q_{1j}^i}{q_{1j}^i z_{1j}^i} \bigg) \Bigg),
\end{flalign*}

\noindent which is equal to 
\begin{flalign}
\label{fh1scalar}
F \big( X^{(1)}; Y^{(1)} \big) F \big(X^{(1)}; A_{[1, v]}^{(1)} \big) \displaystyle\prod_{j = 1}^{d_1} H_{q_{1j}} \left( A_{[1, v]}^{(1)}; \{ q_{1j}^{-1} z_{1j}^{-1} \} \right), 
\end{flalign}

\noindent due to (\hyperref[equalityf]{\ref*{equalityf}}) and (\hyperref[equalityh]{\ref*{equalityh}}). Now, let us evaluate the first two scalar products appearing in the nested scalar product (\hyperref[scalars]{\ref*{scalars}}), which is 
\begin{flalign}
\label{2scalars}
\displaystyle\lim_{u \rightarrow \infty} & \Bigg\langle \bigg\langle F\big(X^{(1)}; B_{[1, u]}^{(1)}, Y^{(1)} \big) \displaystyle\prod_{k = 1}^{d_1} H_{q_{1k}} \big( B_{[1, u]}^{(1)}; \{ q_{1k}^{-1} z_{1k}^{-1} \} \big) , F\big(B_{[1, u]}^{(1)}; A_{[1, u]}^{(1)} \big) \bigg\rangle_{B_{[1, u]}^{(1)}}, \nonumber \\
& \quad F \big( X^{(2)}, A_{[1, u]}^{(1)}; Y^{(2)}, B_{[1, v]}^{(2)} \big) \displaystyle\prod_{k = 1}^{d_2} H_{q_{2k}} \big( A_{[1, u]}^{(1)}; \{ z_{2k} \} \big) H_{q_{2k}} \big( B_{[1, v]}^{(2)}; \{ q_{2k}^{-1} z_{2k}^{-1} \} \big) \Bigg\rangle_{A_{[1, u]}^{(1)}}, 
\end{flalign}

\noindent for any positive integer $v$. Inserting (\hyperref[fh1scalar]{\ref*{fh1scalar}}) into (\hyperref[2scalars]{\ref*{2scalars}}) and applying (\hyperref[equalityf]{\ref*{equalityf}}) and (\hyperref[equalityh]{\ref*{equalityh}}), we obtain that (\hyperref[2scalars]{\ref*{2scalars}}) is equal to
\begin{flalign} 
\label{2scalarsexponential}
F \big( X^{(1)}; Y^{(1)} \big) & F \big( X^{(2)}; Y^{(2)} \big) F \big( X^{(2)}; B_{[1, v]}^{(2)} \big) \displaystyle\prod_{k = 1}^{d_2} H_{q_{2k}} \big( B_{[1, v]}^{(2)}; \{ q_{2k}^{-1} z_{2k}^{-1} \} \big) \nonumber \\
\times \displaystyle\lim_{u \rightarrow \infty} \Bigg\langle & \exp \bigg( \displaystyle\sum_{i=1}^{\infty} \displaystyle\frac{p_i \big(A_{[1, u]}^{(1)} \big)}{i} \Big( p_i \big(X^{(1)} \big) + \displaystyle\sum_{j = 1}^{d_1} \displaystyle\frac{1 - q_{1j}^i}{q_{1j}^i z_{1j}^i} \Big) \bigg), \nonumber \\
& \exp \bigg( \displaystyle\sum_{i=1}^{\infty} \displaystyle\frac{p_i \big(A_{[1, u]}^{(1)} \big)}{i} \Big( p_i \big(B_{[1, v]}^{(2)} \big) + p_i \big(Y^{(2)} \big) + \displaystyle\sum_{k = 1}^{d_2} z_{2k}^i (1 - q_{2k}^i) \Big) \bigg) \Bigg\rangle_{A_{[1, u]}^{(1)}}. 
\end{flalign}

\noindent By \hyperref[scalarseries]{Lemma \ref*{scalarseries}}, we obtain that (\hyperref[2scalarsexponential]{\ref*{2scalarsexponential}}) is equal to 
\begin{flalign*} 
& F \big( X^{(1)}; Y^{(1)} \big) F \big( X^{(2)}; Y^{(2)} \big) F \big( X^{(2)}; B_{[1, v]}^{(2)} \big) \displaystyle\prod_{k = 1}^{d_2} H_{q_{2k}} \big( B_{[1, v]}^{(2)}; \{ q_{2k}^{-1} z_{2k}^{-1} \} \big) \\
& \quad \times \exp \Bigg( \displaystyle\sum_{i = 1}^{\infty} \bigg(\displaystyle\frac{p_i \big(X^{(1)} \big) \big(p_i \big(B_{[1, v]}^{(2)} \big) + p_i \big(Y^{(2)} \big) \big)}{i} + \displaystyle\sum_{k = 1}^{d_2} \displaystyle\frac{p_i \big(X^{(1)} \big) z_{2k}^i (1 - q_{2k}^i)}{i} \\
& \quad + \displaystyle\sum_{j = 1}^{d_1} \displaystyle\frac{ \big( p_i \big(B_{[1, v]}^{(2)} \big) + p_i \big(Y^{(2)} \big) \big) (1 - q_{1j}^i)}{i q_{1j}^i z_{1j}^i} + \displaystyle\sum_{j = 1}^{d_1} \displaystyle\sum_{k = 1}^{d_2} \Big( \displaystyle\frac{z_{2k}^i q_{2k}^i}{i z_{1j}^i} - \displaystyle\frac{z_{2k}^i}{i z_{1j}^i} - \displaystyle\frac{z_{2k}^i q_{2k}^i}{i z_{1j}^i q_{1j}^i} + \displaystyle\frac{z_{2k}^i}{i q_{1j}^i z_{1j}^i} \Big) \bigg) \Bigg),
\end{flalign*}

\noindent which equals
\begin{flalign*}
& F \big( X^{(1)}; Y^{(1)} \big) F \big( X^{(1)}, Y^{(2)} \big) F \big( X^{(2)}; Y^{(2)} \big) \displaystyle\prod_{k = 1}^{d_2} H_{q_{2k}} \big(X^{(1)}; \{ z_{2k} \} \big) \displaystyle\prod_{j = 1}^{d_1} H_{q_{1j}} \big( Y^{(2)}; \{ q_{1j}^{-1} z_{1j}^{-1} \} \big) \\
& \quad \times F \big(X^{(1)}, X^{(2)}; B_{[1, v]}^{(2)} \big) \displaystyle\prod_{i = 1}^2 \displaystyle\prod_{j = 1}^{d_i} H_{q_{ij}} \big( B_{[1, v]}^{(2)}; \{ q_{ij}^{-1} z_{ij}^{-1} \} \big) \displaystyle\prod_{j = 1}^{d_1} \displaystyle\prod_{k = 1}^{d_2} \displaystyle\frac{(z_{1j} - z_{2k})(q_{1j} z_{1j} - q_{2k} z_{2k} )}{(q_{1j} z_{1j} - z_{2k}) (z_{1j} - q_{2k} z_{2k})}, 
\end{flalign*}

\noindent again due to (\hyperref[equalityf]{\ref*{equalityf}}), (\hyperref[equalityh]{\ref*{equalityh}}), and the Taylor expansion of $\log (1 - x)$; convergence of the above sums is due to the inequalities set on the $r_i$, the $s_i$, the elements of the $A^{(i)}$, and the elements of the $B^{(i)}$ (for instance, the fact that each $|z_{2k} / z_{1j} q_{1j}|$ is less than $1$ follows from the fact that $r_1 s_1 > r_2$). If $m = 2$ (in which case $B^{(2)}$ is empty), then the expression above is equal to (\hyperref[scalarsproduct]{\ref*{scalarsproduct}}), which implies the corollary. In general, we may inductively repeat the above procedure to deduce the corollary. 
\end{proof} 

\subsection{Evaluating the Scalar Product} 

\noindent Now we will apply \hyperref[operatorsmeasure]{Proposition \ref*{operatorsmeasure}} to the right side of (\hyperref[differencescalar]{\ref*{differencescalar}}) and use \hyperref[functionprocess]{Lemma \ref*{functionprocess}} and \hyperref[fhscalar]{Lemma \ref*{fhscalar}} in order to obtain a contour integral expression for $\rho_{\textbf{S}} (T)$. 

\begin{prop}
\label{contoursprocess}

We have that $\rho_{\textbf{S}} (T)$ is equal to 
\begin{flalign}
\label{correlationprocesscontour}
\displaystyle\frac{1}{(-4 \pi^2)^d} \displaystyle\oint & \cdots \displaystyle\oint \displaystyle\prod_{h = 1}^m \displaystyle\prod_{i = h}^m \displaystyle\prod_{j = 1}^{d_i} H_{q_{ij}} \big(X^{(h)}; \{ z_{ij} \} \big) \displaystyle\prod_{h = 1}^m \displaystyle\prod_{i = 1}^h \displaystyle\prod_{j = 1}^{d_i} H_{q_{ij}} \big( Y^{(h)}; \{ q_{ij}^{-1} z_{ij}^{-1} \} \big) \nonumber \\
& \quad \times \displaystyle\prod_{h = 1}^m \displaystyle\prod_{j = 1}^{d_h} \displaystyle\frac{1}{z_{hj} - q_{hj} z_{hj}} \displaystyle\prod_{1\le j < k \le d_h} \displaystyle\frac{(q_{hk} z_{hk} - q_{hj} z_{hj})(z_{hk} - z_{hj})}{(q_{hk} z_{hk} - z_{hj})(z_{hk} - q_{hj} z_{hj})} \nonumber \\
& \quad \times \displaystyle\prod_{1\le h < i \le m} \displaystyle\prod_{j = 1}^{d_h} \displaystyle\prod_{k = 1}^{d_i} \displaystyle\frac{(z_{hj} - z_{ik})(q_{hj} z_{hj} - q_{ik} z_{ik})}{(q_{hj} z_{hj} - z_{ik}) (z_{hj} - q_{ik} z_{ik})} \displaystyle\prod_{h = 1}^m \displaystyle\prod_{j = 1}^{d_h} q_{hj}^{t_{hj}} d z_{hj} d q_{hj}, 
\end{flalign}

\noindent where the contour for $z_{hj}$ is the positively oriented circle $|z_{hj}| = r_h$ and the contour for each $q_{hj}$ is the positively oriented circle $|q_{hj}| = s_h$ for each integer $h \in [1, m]$ and $j \in [1, d_h]$. 
\end{prop}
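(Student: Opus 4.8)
The plan is to chain together the four preceding results in sequence: begin from the coefficient-extraction characterization of $\rho_{\textbf{S}}(T)$, rewrite the generating function as a nested scalar product of Macdonald-operator images, replace each operator image by a $z$-contour integral, evaluate the resulting nested scalar product in closed form, and finally extract the coefficient of $Q^{-T}$.

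First I would invoke \hyperref[functionprocess]{Lemma \ref*{functionprocess}} together with \hyperref[limit]{Corollary \ref*{limit}} to write $\rho_{\textbf{S}}(T)$ as the coefficient of $Q^{-T}$ in $C(X;Y;Q;\infty)=\lim_{u\to\infty}C(X;Y;Q;u)$. By \hyperref[weightoperators]{Proposition \ref*{weightoperators}}, for each finite $u$ the quantity $Z_{X,Y}C(X;Y;Q;u)$ equals the nested scalar product (\ref{differencescalar}), whose $i$-th slot is the image of a Cauchy product $F$ under the operators $\prod_{j=1}^{d_i}q_{ij}^{(\cdot)}[\tilde{D}_{(\cdot);q_{ij}}^1]$ acting on the variable set $\{X^{(i)},A_{[1,u]}^{(i-1)}\}$. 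To each such slot I would apply \hyperref[operatorsmeasure]{Proposition \ref*{operatorsmeasure}} (with $G$ the relevant Cauchy product and the operators acting on the enlarged variable set), turning the $i$-th operator image into a $z_{ij}$-contour integral over circles $|z_{ij}|=r_i$, with integrand the within-level prefactor $\prod_j\frac{q_{ij}}{z_{ij}-q_{ij}z_{ij}}\prod_{1\le j<k\le d_i}(\cdots)$ times $\prod_j H_{q_{ij}}\big(\{X^{(i)},A_{[1,u]}^{(i-1)}\};\{z_{ij}\}\big)H_{q_{ij}}\big(\{Y^{(i)},B_{[1,u]}^{(i)}\};\{q_{ij}^{-1}z_{ij}^{-1}\}\big)$. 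Here I would use multiplicativity of $H_q$ in its first argument (immediate from (\ref{equalityh})) to split each $H$ across the union of variable sets; the factors $H_{q_{ij}}(X^{(i)};\cdot)$, $H_{q_{ij}}(Y^{(i)};\cdot)$ and the rational prefactors do not depend on the auxiliary variables $A^{(\cdot)},B^{(\cdot)}$ and may be pulled outside every scalar product.

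After pulling the $z$-integrals out of the scalar products, what remains inside is precisely the nested scalar product appearing in \hyperref[fhscalar]{Lemma \ref*{fhscalar}} (the first argument of the outermost pairing being $F\big(X^{(1)};B_{[1,u]}^{(1)},Y^{(1)}\big)\prod_k H_{q_{1k}}\big(B_{[1,u]}^{(1)};\{q_{1k}^{-1}z_{1k}^{-1}\}\big)$, and so on). Taking $u\to\infty$ and invoking \hyperref[fhscalar]{Lemma \ref*{fhscalar}} evaluates this to (\ref{scalarsproduct}), contributing the factor $Z_{X,Y}=\prod_{1\le h\le i\le m}F(X^{(h)};Y^{(i)})$, the off-diagonal ($h<i$) cross rational factors, and the off-diagonal factors $H_{q_{ik}}(X^{(h)};\{z_{ik}\})$ and $H_{q_{ij}}(Y^{(h)};\{q_{ij}^{-1}z_{ij}^{-1}\})$. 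Combining these with the diagonal ($h=i$) factors and within-level prefactors pulled out in the previous step assembles the full products over $1\le h\le i\le m$, and dividing by $Z_{X,Y}$ gives a $z$-contour integral for $C(X;Y;Q;\infty)$. I would then extract the coefficient of $Q^{-T}$ through \hyperref[functionprocess]{Lemma \ref*{functionprocess}}, multiplying by $\prod_{h,j}q_{hj}^{t_{hj}-1}$ and integrating each $q_{hj}$ over $|q_{hj}|=s_h$; the factor $q_{hj}$ inside $\frac{q_{hj}}{z_{hj}-q_{hj}z_{hj}}$ combines with $q_{hj}^{t_{hj}-1}$ to produce the $\frac{1}{z_{hj}-q_{hj}z_{hj}}\,q_{hj}^{t_{hj}}$ appearing in (\ref{correlationprocesscontour}).

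The main obstacle is justifying these interchanges rigorously: commuting $\lim_{u\to\infty}$, the $z$-contour integrations introduced by \hyperref[operatorsmeasure]{Proposition \ref*{operatorsmeasure}}, and the bilinear truncated scalar products, and verifying that the contour hypotheses of \hyperref[operatorsmeasure]{Proposition \ref*{operatorsmeasure}} hold \emph{uniformly} in $u$, even though the number of variables in slot $i$ grows like $n+u$. This uniformity is exactly what the carefully arranged inequalities among the radii buy us: the elements of $A^{(i)}$ and $B^{(i)}$ have magnitude less than $s_1 r_1^{-1}$, and the radii satisfy $r_i s_i>r_{i+1}$ with each $s_i<1$, so the same contours $|z_{ij}|=r_i$ and the same magnitude conditions apply for every $u$. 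The requisite absolute convergence, allowing the limit and the (bi)linear operations to be exchanged, is supplied by the bounds underlying \hyperref[generatingfunctioninequality]{Lemma \ref*{generatingfunctioninequality}} and by \hyperref[scalarseries]{Lemma \ref*{scalarseries}}, which already governs the passage to $u\to\infty$ inside each scalar product in the proof of \hyperref[fhscalar]{Lemma \ref*{fhscalar}}.
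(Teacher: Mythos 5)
Your outline reproduces the paper's own argument almost step for step: the same chain of reductions (Lemma \ref{functionprocess} with Corollary \ref{limit}, then Proposition \ref{weightoperators}, then slot-by-slot application of Proposition \ref{operatorsmeasure} with each $H$-factor split multiplicatively so that the $X^{(i)}$- and $Y^{(i)}$-parts exit the scalar products, then Lemma \ref{fhscalar} as $u \to \infty$, then the final $q$-integration) is exactly what the paper does. But there is one genuine gap, and it sits at the heart of the argument: you assert that Proposition \ref{operatorsmeasure} turns the $i$-th operator image into a contour integral over the \emph{single} circles $|z_{ij}| = r_i$. It does not. The contours in Proposition \ref{operatorsmeasure} are unions of the negatively oriented circle $|z_{ij}| = r_i$ with a positively oriented outer circle $|z_{ij}| = R$ enclosing the poles at the inverted variables $x_k^{-1}$ (and, for later slots, $a_k^{-1}$), because the difference operator is computed exactly by the residues at those poles. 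The single-circle integral you wrote down is \emph{not} equal to the operator image: a circle $|z_{ij}| = r_i$ encloses instead the pole at $z_{ij} = 0$ coming from $q_{ij}/(z_{ij} - q_{ij}z_{ij})$ and the poles at $z_{ij} = q_{ij}^{-1}y$, and the discrepancy --- the outer-circle contribution --- does not tend to zero as $R \to \infty$, since the integrand decays only like $1/z_{ij}$ there.

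What is actually true, and what the paper proves in passing from (\ref{contourscalar1}) to (\ref{innercontourscalar1}) and then to (\ref{contourprocess}), is weaker but sufficient: the outer-circle pieces may be discarded \emph{for the purpose of extracting the coefficient of} $Q^{-T}$. On the circle $|z_{ij}| = R \to \infty$ the factor $H_{q_{ij}}\big(X^{(i)}, A^{(i-1)}_{[1,u]}; \{z_{ij}\}\big)$ tends to $q_{ij}^{n+u}$ while the remaining factors contribute only bounded terms with nonnegative powers of $q_{ij}$, so every such summand carries a power of $q_{ij}$ at least $n$, which exceeds the exponent of $q_{ij}$ appearing in $Q^{-T}$ by the standing assumption of Theorem \ref{process2} that $n$ is large. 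This coefficient-comparison step, run in parallel with Proposition \ref{contourmeasure} for the Schur measure, is essentially the only place in the proof of this proposition where the largeness of $n$ is used; your proposal never invokes that hypothesis, which is a symptom of the step being missing rather than implicit. Supplying it also settles the orientation bookkeeping you passed over: the surviving inner circles are negatively oriented, and reversing them produces the factor $(-1)^d$ that converts $1/(2\pi i)^{2d}$ into the $1/(4\pi^2)^d$ of (\ref{correlationprocesscontour}).
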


\begin{proof}
By \hyperref[functionprocess]{Lemma \ref*{functionprocess}}, $\rho_{\textbf{S}} (T)$ is the coefficient of $Q^{-T}$ of $C(X; Y; Q; \infty)$, which is equal to the limit as $u$ tends to $\infty$ of the coefficient of $Q^{-T}$ in $C(X; Y; Q; u)$, by \hyperref[limit]{Corollary \ref*{limit}}. Therefore, in order to obtain a contour integral form for $\rho_{\textbf{S}} (T)$, we will apply \hyperref[operatorsmeasure]{Proposition \ref*{operatorsmeasure}} to the right side of (\hyperref[differencescalar]{\ref*{differencescalar}}) $m$ times. The first term in the nested scalar product on the right side of (\hyperref[differencescalar]{\ref*{differencescalar}}) is 
\begin{flalign}
\label{scalar1}
\Bigg\langle \Big( \displaystyle\prod_{j = 1}^{d_1} q_{1j}^n [\tilde{D}_{n; q_{1j}}^1]_{X^{(1)}} \Big) F \big(X^{(1)}; Y^{(1)}, B_{[1, u]}^{(1)} \big), F \big(B_{[1, u]}^{(1)}; A_{[1, u]}^{(1)} \big) \Bigg\rangle_{B_{[1, u]}^{(1)}}. 
\end{flalign}

\noindent Applying \hyperref[operatorsmeasure]{Proposition \ref*{operatorsmeasure}} yields that (\hyperref[scalar1]{\ref*{scalar1}}) is equal to 
\begin{flalign}
\label{contourscalar1}
\Bigg\langle \displaystyle\frac{1}{(2\pi i)^{d_1}} \displaystyle\oint & \cdots \displaystyle\oint \displaystyle\prod_{j = 1}^{d_1} \displaystyle\frac{q_{1j}}{z_{1j} - q_{1j} z_{1j}} \displaystyle\prod_{1\le j < k\le d_1} \displaystyle\frac{(q_{1k} z_{1k}- q_{1j} z_{1j})(z_{1k} - z_{1j})}{(z_{1k} - q_{1j} z_{1j})(q_{1k} z_{1k} - z_{1j})} \nonumber \\ 
& \quad \times F\big(X^{(1)}; Y^{(1)}, B_{[1, u]}^{(1)} \big) \displaystyle\prod_{j=1}^{d_1} H_{q_{1j}} \big(X^{(1)}; \{ z_{1j} \} \big) H_{q_{1j}} \big(Y^{(1)}; \{ q_{1j}^{-1} z_{1j}^{-1} \} \big) \nonumber \\
& \quad \times \displaystyle\prod_{j = 1}^{d_1} H_{q_{1j}} \big( B_{[1, u]}^{(1)}; \{ q_{1j}^{-1} z_{1j}^{-1} \} \big) dz_{1j}, F\big(B_{[1, u]}^{(1)}; A_{[1, u]}^{(1)} \big) \Bigg\rangle_{B_{[1, u]}^{(1)}}, 
\end{flalign}

\noindent where the contour for each $z_{1j}$ is the union of the positively oriented circle $|z_{1j}| = r_1$ and the negatively oriented circle $|z_{1j}| = R$, where $R$ is any positive number greater than $\max \big( X^{(1)} \big)^{-1}$. Now, let $R$ tend to $\infty$. The expression (\hyperref[contourscalar1]{\ref*{contourscalar1}}) is the sum of $2^{d_1}$ integrals, in which each variable is integrated either along a circle of radius $r_1$ or along a circle of radius $R$. Using similar reasoning as applied in \hyperref[contourmeasure]{Proposition \ref*{contourmeasure}}, we see that any summand in which some variable $z_{1i}$ is integrated along a circle of radius $R$ contains a factor of $(z_{1i})^n$, due to the $H_{q_{1i}} \big( X^{(i)}; \{ z_{1j} \} \big)$ term. Therefore, summands of this type do not affect the coefficient of $Q^{-T}$ in $C(X; Y; Q; u)$, and we can omit them for the purpose of evaluating this coefficient. Hence, in order to obtain the coefficient of $Q^{-T}$ in $C(X; Y; Q; u)$, we may replace (\hyperref[scalar1]{\ref*{scalar1}}) in (\hyperref[differencescalar]{\ref*{differencescalar}}) with the integral 
\begin{flalign}
\label{innercontourscalar1}
\displaystyle\frac{1}{(2\pi i)^{d_1}} \displaystyle\oint & \cdots \displaystyle\oint \displaystyle\prod_{j = 1}^{d_1} \displaystyle\frac{q_{1j}}{z_{1j} - q_{1j} z_{1j}} \displaystyle\prod_{1\le j < k\le d_1} \displaystyle\frac{(q_{1k} z_{1k}- q_{1j} z_{1j})(z_{1k} - z_{1j})}{(z_{1k} - q_{1j} z_{1j})(q_{1k} z_{1k} - z_{1j})} \nonumber \\ 
& \quad \times \Bigg\langle F\big(X^{(1)}; Y^{(1)}, B_{[1, u]}^{(1)} \big) \displaystyle\prod_{j = 1}^{d_1} H_{q_{1j}} \big( B_{[1, u]}^{(1)}; \{ q_{1j}^{-1} z_{1j}^{-1} \} \big), F\big(B_{[1, u]}^{(1)}; A_{[1, u]}^{(1)} \big) \Bigg\rangle_{B_{[1, u]}^{(1)}} \nonumber \\
& \quad \times \displaystyle\prod_{j=1}^{d_1} H_{q_{1j}} \big(X^{(1)}; \{ z_{1j} \} \big) H_{q_{1j}} \big(Y^{(1)}; \{ q_{1j}^{-1} z_{1j}^{-1} \} \big) dz_{1j}, 
\end{flalign}

\noindent in which the contour for each $z_{1j}$ is the positively oriented circle $|z_{1j}| = r_1$. Here, we have used bilinearity of the scalar product to commute integration with the scalar product. 

Now let us repeat this procedure. After replacing (\hyperref[scalar1]{\ref*{scalar1}}) with (\hyperref[innercontourscalar1]{\ref*{innercontourscalar1}}) and applying \hyperref[operatorsmeasure]{Proposition \ref*{operatorsmeasure}} again, 
\begin{flalign}
\label{scalar2}
\Bigg\langle \bigg\langle \Big( \displaystyle\prod_{j = 1}^{d_1} q_{1j}^n [\tilde{D}_{n; q_{1j}}^1]_{X^{(1)}} \Big) F \big(X^{(1)}; Y^{(1)}, B_{[1, u]}^{(1)} \big), F \big(B_{[1, u]}^{(1)}; A_{[1, u]}^{(1)} \big) \bigg\rangle_{B_{[1, u]}^{(1)}}, \nonumber \\
\Big( \displaystyle\prod_{j = 1}^{d_2} q_{2j}^{n + u} [\tilde{D}_{n + u; q_{2j}}^1]_{ \{ X^{(2)}, A_{[1, u]}^{(1)} \} } \Big) F \big(X^{(2)}, A_{[1, u]}^{(1)}; Y^{(2)}, B_{[1, u]}^{(2)} \big) \Bigg\rangle_{A_{[1, u]}^{(1)}}
\end{flalign}

\noindent becomes 
\begin{flalign*}
& \Bigg\langle \displaystyle\frac{1}{(2\pi i)^{d_1}} \displaystyle\oint \cdots \displaystyle\oint \displaystyle\prod_{j = 1}^{d_1} \displaystyle\frac{q_{1j}}{z_{1j} - q_{1j} z_{1j}} \displaystyle\prod_{1\le j < k\le d_1} \displaystyle\frac{(q_{1k} z_{1k}- q_{1j} z_{1j})(z_{1k} - z_{1j})}{(z_{1k} - q_{1j} z_{1j})(q_{1k} z_{1k} - z_{1j})} \nonumber \\ 
& \qquad \qquad \times \Bigg\langle F\big(X^{(1)}; Y^{(1)}, B_{[1, u]}^{(1)} \big) \displaystyle\prod_{j = 1}^{d_1} H_{q_{1j}} \big( B_{[1, u]}^{(1)}; \{ q_{1j}^{-1} z_{1j}^{-1} \} \big), F\big(B_{[1, u]}^{(1)}; A_{[1, u]}^{(1)} \big) \Bigg\rangle_{B_{[1, u]}^{(1)}} \\ 
& \qquad \qquad \times \displaystyle\prod_{j=1}^{d_1} H_{q_{1j}} \big(X^{(1)}; \{ z_{1j} \} \big) H_{q_{1j}} \big(Y^{(1)}; \{ q_{1j}^{-1} z_{1j}^{-1} \} \big) dz_{1j}, \nonumber \\
& \displaystyle\frac{F\big(X^{(2)}, A_{[1, u]}^{(1)}; Y^{(2)}, B_{[2, u]}^{(2)} \big)}{(2 \pi i)^{d_2}} \displaystyle\oint \cdots \displaystyle\oint \displaystyle\prod_{j=1}^{d_2} H_{q_{2j}} \big(X^{(2)}, A_{[1, u]}^{(1)}; \{ z_{2j} \} \big) H_{q_{2j}} \big(Y^{(2)}, B_{[1, u]}^{(2)}; \{ q_{2j}^{-1} z_{2j}^{-1} \} \big) \nonumber \\
& \qquad \qquad \qquad \qquad \qquad \times \displaystyle\prod_{1\le j < k\le d_2} \displaystyle\frac{(q_{2k} z_{2k} - q_{2j} z_{2j})(z_{2k} - z_{2j})}{(z_{2k} - q_{2j} z_{2j})(q_{2k} z_{2k} - z_{2j})} \displaystyle\prod_{j = 1}^{d_2} \displaystyle\frac{q_{2j} dz_{2j}}{z_{2j} - q_{2j} z_{2j}} \Bigg\rangle_{A_{[1, u]}^{(1)}}, 
\end{flalign*}

\noindent where the contour for each $z_{1j}$ is the positively oriented circle $|z_{1j}| = r_1$, and the contour for each $z_{2j}$ is the union of the negatively oriented circle $|z_{2j}| = R'$ and the positively oriented circle $|z_{1j}| = r_2$ (where $R'$ is some positive number greater than $w^{-1}$, for each $w$ in $X^{(2)}$ or $A_{[1, u]}^{(2)}$). Let $R'$ tend to $\infty$ and express the above integral as the sum of $2^{d_2}$ summands in which each $z_{2j}$ is either integrated along a circle of radius $r_2$ or along a circle of radius $R'$. As previously, we can omit all summands in which some variable is integrated along the circle of radius $R'$ for the purposes of finding the coefficient of $Q^{-T}$ in $C(X; Y; Q; u)$. Therefore, we can replace (\hyperref[scalar2]{\ref*{scalar2}}) with 
\begin{flalign*}
\displaystyle\frac{1}{(2\pi i)^{d_1 + d_2}} \displaystyle\oint & \cdots \displaystyle\oint \displaystyle\prod_{h = 1}^2 \displaystyle\prod_{j = 1}^{d_h} \displaystyle\frac{q_{hj}}{z_{hj} - q_{hj} z_{hj}} \displaystyle\prod_{1\le j < k \le d_h} \displaystyle\frac{(q_{hk} z_{hk} - q_{hj} z_{hj})(z_{hk} - z_{hj})}{(q_{hk} z_{hk} - z_{hj})(z_{hk} - q_{hj} z_{hj})} \\ 
& \quad \times \Bigg\langle \bigg\langle F \big(X^{(1)}; Y^{(1)}, B_{[1, u]}^{(1)} \big) \displaystyle\prod_{j = 1}^{d_1} H_{q_{1j}} \big( B_{[1, u]}^{(1)}; \{ q_{1j}^{-1} z_{1j}^{-1} \} \big) , F \big(B_{[1, u]}^{(1)}; A_{[1, u]}^{(1)} \big) \bigg\rangle_{B_{[1, u]}^{(1)}}, \\ 
& \qquad  F \big(X^{(2)}, A_{[1, u]}^{(1)}; Y^{(2)}, B_{[1, u]}^{(2)} \big) \displaystyle\prod_{j = 1}^{d_2} H_{q_{2j}} \big(A_{[1, u]}^{(1)}; \{ z_{2j} \} \big) H_{q_{2j}} \big( B_{[1, u]}^{(2)}; \{ q_{2j}^{-1} z_{2j}^{-1} \} \big) \Bigg\rangle_{A_{[1, u]}^{(1)}} \\
& \quad \times \displaystyle\prod_{h = 1}^2 \displaystyle\prod_{j=1}^{d_h} H_{q_{hj}} \big(X^{(h)}; \{ z_{hj} \} \big) H_{q_{hj}} \big( Y^{(h)}; \{ q_{hj}^{-1} z_{hj}^{-1} \} \big) dz_{hj}, 
\end{flalign*}
\noindent where each $z_{1j}$ is integrated along the positively oriented circle $|z_{1j}| = r_1$ and each $z_{2j}$ is integrated along the positively oriented circle $z_{2j} = r_2$ (we have again commuted the scalar product with integration). 

Repeating this procedure on the other terms in the nested scalar product on the right side of (\hyperref[differencescalar]{\ref*{differencescalar}}) yields that the coefficient of $Q^{-T}$ in $Z_{X, Y} C(X; Y; Q; u)$ is equal to the coefficient of $Q^{-T}$ in 
\begin{flalign}
\label{contourprocess} 
\displaystyle\frac{1}{(2 \pi i)^d} & \displaystyle\oint \cdots \displaystyle\oint \displaystyle\prod_{h = 1}^m \displaystyle\prod_{j = 1}^{d_h} \displaystyle\frac{q_{hj}}{z_{hj} - q_{hj} z_{hj}} \displaystyle\prod_{1\le j < k \le d_h} \displaystyle\frac{(q_{hk} z_{hk} - q_{hj} z_{hj})(z_{hk} - z_{hj})}{(q_{hk} z_{hk} - z_{hj})(z_{hk} - q_{hj} z_{hj})} \nonumber \\
& \quad \times \Bigg\langle \cdots \bigg\langle \Big\langle \big\langle F\big(X^{(1)}; B_{[1, u]}^{(1)}, Y^{(1)} \big) \displaystyle\prod_{j = 1}^{d_1} H_{q_{1j}} \big( B_{[1, u]}^{(1)}; \{ q_{1j}^{-1} z_{1j}^{-1} \} \big) , F\big(B_{[1, u]}^{(1)}; A_{[1, u]}^{(1)} \big) \big\rangle_{B_{[1, u]}^{(1)}}, \nonumber \\
& \quad \quad F \big( X^{(2)}, A_{[1, u]}^{(1)}; Y^{(2)}, B_{[1, u]}^{(2)} \big) \displaystyle\prod_{j = 1}^{d_2} H_{q_{2j}} \big( A_{[1, u]}^{(1)}; \{ z_{2j} \} \big) H_{q_{2j}} \big( B_{[1, u]}^{(2)}; \{ q_{2j}^{-1} z_{2j}^{-1} \} \big) \Big\rangle_{A_{[1, u]}^{(1)}}, \nonumber \\
& \quad \quad F \big( A_{[1, u]}^{(2)}; B_{[1, u]}^{(2)} \big) \bigg\rangle_{B_{[1, u]}^{(2)}}, \cdots , F \big( X^{(m)}, A_{[1, u]}^{(m - 1)}; Y^{(m)} \big) \displaystyle\prod_{j = 1}^{d_m} H_{q_{mj}} \big( A_{[1, u]}^{(m - 1)}; \{ z_{mj} \} \big) \Bigg\rangle_{A_{[1, u]}^{(m - 1)}} \nonumber \\
& \quad \times \displaystyle\prod_{h = 1}^m \displaystyle\prod_{j = 1}^{d_h} H_{q_{hj}} \big( X^{(h)}; \{ z_{hj} \} \big) H_{q_{hj}} \big( Y^{(h)}; \{ q_{hj}^{-1} z_{hj}^{-1} \} \big) d z_{hj}, 
\end{flalign}

\noindent where the contour for $z_{ij}$ is the positively oriented circle $|z_{ij}| = r_i$ for each integer $i \in [1, m]$ and $j \in [1, d_i]$. Taking the limit at $u$ tends to $\infty$ and applying \hyperref[functionprocess]{Lemma \ref*{functionprocess}}, \hyperref[limit]{Corollary \ref*{limit}}, and \hyperref[fhscalar]{Lemma \ref*{fhscalar}} yields that $\rho_{\textbf{S}} (T)$ is equal to the coefficient of $Q^{-T}$ in 
\begin{flalign*}
\displaystyle\frac{1}{(2 \pi i)^d} \displaystyle\oint & \cdots \displaystyle\oint \displaystyle\prod_{h = 1}^m \displaystyle\prod_{i = h}^m \displaystyle\prod_{k = 1}^{d_i} H_{q_{ik}} \big(X^{(h)}; \{ z_{ik} \} \big) \displaystyle\prod_{h = 1}^m \displaystyle\prod_{i = 1}^h \displaystyle\prod_{j = 1}^{d_i} H_{q_{ij}} \big( Y^{(h)}; \{ q_{ij}^{-1} z_{ij}^{-1} \} \big) \\
& \quad \times \displaystyle\prod_{h = 1}^m \displaystyle\prod_{j = 1}^{d_h} \displaystyle\frac{q_{hj}}{z_{hj} - q_{hj} z_{hj}} \displaystyle\prod_{1\le j < k \le d_h} \displaystyle\frac{(q_{hk} z_{hk} - q_{hj} z_{hj})(z_{hk} - z_{hj})}{(q_{hk} z_{hk} - z_{hj})(z_{hk} - q_{hj} z_{hj})} \nonumber \\
& \quad \times \displaystyle\prod_{1\le h < i \le m} \displaystyle\prod_{j = 1}^{d_h} \displaystyle\prod_{k = 1}^{d_i} \displaystyle\frac{(z_{hj} - z_{ik})(q_{hj} z_{hj} - q_{ik} z_{ik})}{(q_{hj} z_{hj} - z_{ik}) (z_{hj} - q_{ik} z_{ik})} \displaystyle\prod_{h = 1}^m \displaystyle\prod_{j = 1}^{d_h} d z_{hj} 
\end{flalign*}

\noindent where the contours are as above. Then, multiplying the above expression by $Q^{T - 1}$ (where $T - 1$ consists of the elements $(i, t_{ij} - 1)$ for each integer $i \in [1, m]$ and $j \in [1, d_i]$), integrating each $q_{ij}$ along the positively oriented circle $|q_{ij}| = s_i$, and applying the residue theorem yields the proposition. 
\end{proof}

\noindent We may now establish \hyperref[process2]{Theorem \ref*{process2}}. 

\begin{proof}[Proof of Theorem 3.1.1]
Let $\textbf{M} (Q; Z)$ denote the $d \times d$ matrix, whose rows and columns are indexed by pairs of integers $(i, j)$ with $i \in [1, m]$ and $j \in [1, d_i]$ ordered lexicographically from left to right (so that row $(i, j)$ is below row $(i', j')$ if $i > i'$ or if $i = i'$ and $j > j'$, and column $(i, j)$ is to the right of column $(i', j')$ if $i > i'$ or if $i = i'$ and $j > j'$), and whose $\big( (j, k), (j', k') \big)$ entry is $1 / (z_{jk} - q_{j' k'} z_{j' k'})$ for all $j$, $j'$, $k$, and $k'$. The Cauchy determinant identity (\hyperref[determinant]{\ref*{determinant}}) implies that 
\begin{flalign}
\label{processdeterminant}
\det \textbf{M}(Q; Z) &= \displaystyle\prod_{h = 1}^m \displaystyle\prod_{j = 1}^{d_h} \displaystyle\frac{q_{hj}}{z_{hj} - q_{hj} z_{hj}} \displaystyle\prod_{1\le j < k \le d_h} \displaystyle\frac{(q_{hk} z_{hk} - q_{hj} z_{hj})(z_{hk} - z_{hj})}{(q_{hk} z_{hk} - z_{hj})(z_{hk} - q_{hj} z_{hj})} \nonumber \\
& \quad \times \displaystyle\prod_{1\le h < i \le m} \displaystyle\prod_{j = 1}^{d_h} \displaystyle\prod_{k = 1}^{d_i} \displaystyle\frac{(z_{hj} - z_{ik})(q_{hj} z_{hj} - q_{ik} z_{ik})}{(q_{hj} z_{hj} - z_{ik}) (z_{hj} - q_{ik} z_{ik})}. 
\end{flalign} 

\noindent Inserting (\hyperref[processdeterminant]{\ref*{processdeterminant}}) into \hyperref[contoursprocess]{Proposition \ref*{contoursprocess}} and using the definition (\hyperref[equalityh]{\ref*{equalityh}}) yields 
\begin{flalign*}
\rho_{\textbf{S}} (T) &= \displaystyle\frac{1}{(-4 \pi^2)^d} \displaystyle\oint \cdots \displaystyle\oint \displaystyle\prod_{i = 1}^m \displaystyle\prod_{j = 1}^{d_i} \displaystyle\prod_{1\le h \le i \le h' \le m} \displaystyle\frac{F \big( X^{(h)}; \{ z_{ij} \} \big) F \big( Y^{(h')}; \{ q_{ij}^{-1} z_{ij}^{-1} \} \big)}{F \big( Y^{(h')}; \{ z_{ij}^{-1} \} \big) F \big( X^{(h)}; \{ q_{ij} z_{ij} \} \big)} \nonumber \\ 
& \qquad \qquad \qquad \times \det \textbf{M} (Q; Z) \displaystyle\prod_{h = 1}^m \displaystyle\prod_{j = 1}^{d_h} q_{hj}^{t_{hj}} d z_{hj} d q_{hj}, 
\end{flalign*}

\noindent where the contour for $z_{hj}$ is the positively oriented circle $|z_{hj}| = r_h$ and the contour for each $q_{hj}$ is the positively oriented circle $|q_{hj}| = s_h$ for each integer $h \in [1, m]$ and $j \in [1, d_h]$. 

Setting $w_{ij} = w_{i,j} = q_{ij} z_{ij}$, it follows that 
\begin{flalign}
	\label{correlationprocessdeterminant}
	\rho_{\textbf{S}} (T) &= \displaystyle\frac{1}{(-4 \pi^2)^d} \displaystyle\oint \cdots \displaystyle\oint \displaystyle\prod_{i = 1}^m \displaystyle\prod_{j = 1}^{d_i} \displaystyle\prod_{1\le h \le i \le h' \le m} \displaystyle\frac{F \big( X^{(h)}; \{ z_{ij} \} \big) F \big( Y^{(h')}; \{ w_{ij}^{-1} \} \big)}{F \big( Y^{(h')}; \{ z_{ij}^{-1} \} \big) F \big( X^{(h)}; \{ w_{ij} \} \big)} \nonumber \\ 
	& \qquad \qquad \qquad \times \det \textbf{M}_0 (W; Z) \displaystyle\prod_{h = 1}^m \displaystyle\prod_{j = 1}^{d_h} w_{hj}^{t_{hj}} z_{hj}^{-t_{hj}-1}  d w_{hj} d z_{hj}, 
\end{flalign}

\noindent where $\textbf{M}_0 (W; Z)$ denotes the $d \times d$ matrix, whose rows and columns are indexed by pairs of integers $(i, j)$ with $i \in [1, m]$ and $j \in [1, d_i]$ (ordered lexicographically from left to right), and whose $\big( (j, k); (j', k') \big)$ entry is given by $(z_{jk} - w_{j'k'})^{-1}$ for all $j$, $j'$, $k$, and $k'$. In the above, for each integer $h \in [1, m]$ and $j \in [1, d_h]$, the contours for $z_{hj}$ and $r_{hj}$ are the positively oriented circles $|z_{hj}| = r_h$ and $|w_{hj}| = r_h s_h$, respectively. Observe for any integers $h, h' \in [1, m]$; $j \in [1, d_h]$; and $j' \in [1, d_{h'}]$ that 
\begin{flalign}
	\label{zhjwhj} 
	|z_{hj}| > |w_{h'j'}|, \qquad \text{if $h \le h'$}; \qquad \qquad |w_{h'j'}| > |z_{hj}|, \qquad \text{if $h' < h$}. 
\end{flalign}

\noindent Indeed, due to the constraints on the $(r_k, s_k)$ imposed in \hyperref[ProductW]{Section \ref*{ProductW}}, for $h \le h'$ we have $|z_{hj}| = r_h \ge r_{h'} > r_{h'} s_{h'} = |w_{h'j'}|$, and for $h' < h$ (so that $h' + 1 \le h$) we have $|w_{h'j'}| = r_{h'} s_{h'} > r_{h'+1} \ge r_h = |z_{hj}|$. 

Then expanding (\hyperref[correlationprocessdeterminant]{\ref*{correlationprocessdeterminant}}) as a signed sum, using (\hyperref[zhjwhj]{\ref*{zhjwhj}}), and deforming the contours will yield \hyperref[process2]{Theorem \ref*{process2}}; we omit this since it is similar to the proof of \hyperref[measure2]{Theorem \ref*{measure2}}. 

\end{proof}

\section{Acknowledgements}
This project was partially completed at MIT's Summer Program for Undergraduate Research (SPUR), under the funding of NSF's Research Training Grant and under the supervision of Pavel Etingof. The author heartily thanks Pavel Etingof and Guozhen Wang for their advice and conversations; Alexei Borodin for suggesting this project; Ivan Corwin for his valuable aid in revising this paper; Evgeni Dimitrov for pointing out several misprints; and the referees for their helpful suggestions.

\end{document}